\theoremstyle{plain}
\newtheorem{theorem}{Theorem}[section]
\newtheorem{lemma}[theorem]{Lemma}
\newtheorem{corollary}[theorem]{Corollary}
\newtheorem{proposition}[theorem]{Proposition}
\theoremstyle{definition}
\newtheorem{definition}[theorem]{Definition}
\newtheorem{example}[theorem]{Example}
\newtheorem{question}{Question}
\theoremstyle{remark}
\newtheorem*{remark}{Remark}
\newcommand{\cA}{\mathcal{A}}
\newcommand{\cC}{\mathcal{C}}
\newcommand{\cD}{\mathcal{D}}
\newcommand{\cE}{\mathcal{E}}
\newcommand{\cF}{\mathcal{F}}
\newcommand{\cG}{\mathcal{G}}
\newcommand{\cI}{\mathcal{I}}
\newcommand{\I}{\cI}
\newcommand{\cJ}{\mathcal{J}}
\newcommand{\J}{\cJ}
\newcommand{\cK}{\mathcal{K}}
\newcommand{\K}{\cK}
\newcommand{\cN}{\mathcal{N}}
\newcommand{\cP}{\mathcal{P}}
\newcommand{\cU}{\mathcal{U}}
\newcommand{\cZ}{\mathcal{Z}}
\newcommand{\continuum}{\mathfrak{c}}
\newcommand{\cc}{\continuum}
\newcommand{\bnumber}{\mathfrak{b}}
\newcommand{\bb}{\bnumber}
\newcommand{\dnumber}{\mathfrak{d}}
\newcommand{\dd}{\dnumber}
\DeclareMathOperator{\add}{add}
\DeclareMathOperator{\adds}{\add^*}
\newcommand{\fin}{\mathrm{Fin}}
\newcommand{\finShort}{\mathrm{F}}
\newcommand{\ED}{\mathcal{ED}}
\DeclareMathOperator{\dom}{dom}
\DeclareMathOperator{\ran}{ran}
\DeclareMathOperator{\cf}{cf}
\begin{document}

%%%%%%%%%%%%%%%%%%%%%%%%%%%%%%%%%%%%%%%%%%%%%%%%%%
%%%%% TITLE
%%%%%%%%%%%%%%%%%%%%%%%%%%%%%%%%%%%%%%%%%%%%%%%%%%

\title{Yet another ideal version of the bounding number}

%%%%%%%%%%%%%%%%%%%%%%%%%%%%%%%%%%%%%%%%%%%%%%%%%%
%%%%% AUTHORS
%%%%%%%%%%%%%%%%%%%%%%%%%%%%%%%%%%%%%%%%%%%%%%%%%%

\author{Rafa\l{} Filip\'{o}w}
\address[Rafa\l{}~Filip\'{o}w]{Institute of Mathematics\\ Faculty of Mathematics, Physics and Informatics\\ University of Gda\'{n}sk\\ ul.~Wita Stwosza 57\\ 80-308 Gda\'{n}sk\\ Poland}
\email{Rafal.Filipow@ug.edu.pl}
\urladdr{http://mat.ug.edu.pl/~rfilipow}

\author[Adam Kwela]{Adam Kwela}
\address[Adam Kwela]{Institute of Mathematics\\ Faculty of Mathematics\\ Physics and Informatics\\ University of Gda\'{n}sk\\ ul.~Wita  Stwosza 57\\ 80-308 Gda\'{n}sk\\ Poland}
\email{Adam.Kwela@ug.edu.pl}
\urladdr{http://kwela.strony.ug.edu.pl/}

%%%%%%%%%%%%%%%%%%%%%%%%%%%%%%%%%%%%%%%%%%%%%%%%%%
%%%%% DATE
%%%%%%%%%%%%%%%%%%%%%%%%%%%%%%%%%%%%%%%%%%%%%%%%%%

\date{\today}

%%%%%%%%%%%%%%%%%%%%%%%%%%%%%%%%%%%%%%%%%%%%%%%%%%
%%%%% MSC (MATHEMATICAL SUBJECT CLASSIFICATION)
%%%%%%%%%%%%%%%%%%%%%%%%%%%%%%%%%%%%%%%%%%%%%%%%%%

\subjclass[2010]{Primary: 
03E05. % (1980-now) Other combinatorial set theory 
Secondary:
03E15, %(1980-now) Descriptive set theory [See also 28A05, 54H05] 
03E17, %(2000-now) Cardinal characteristics of the continuum
03E35. %(1980-now) Consistency and independence results  
}

%%%%%%%%%%%%%%%%%%%%%%%%%%%%%%%%%%%%%%%%%%%%%%%%%%
%%%%% KEYWORDS
%%%%%%%%%%%%%%%%%%%%%%%%%%%%%%%%%%%%%%%%%%%%%%%%%%

\keywords{ideal, filter, ultrafilter, Borel ideal, P-ideal, weak P-ideal, 
bounding number, dominating number, 
quasi-normal convergence, quasi-normal space, QN-spaca,
additivity of an ideal.
}

%%%%%%%%%%%%%%%%%%%%%%%%%%%%%%%%%%%%%%%%%%%%%%%%%%
%%%%% ABSTRACT
%%%%%%%%%%%%%%%%%%%%%%%%%%%%%%%%%%%%%%%%%%%%%%%%%%

\begin{abstract}
Let $\I$ be an ideal on $\omega$. For $f,g\in\omega^\omega$ we write $f \leq_\I  g$ if $f(n)  \leq  g(n)$ for all $n\in\omega\setminus A$ with some $A\in\I$. Moreover, we denote $\mathcal{D}_\I=\{f\in\omega^\omega: f^{-1}[\{n\}]\in\I \text{ for every $n\in \omega$}\}$ (in particular, $\mathcal{D}_\fin$ denotes the family of all finite-to-one functions).

We examine cardinal numbers $\bb(\geq_\I\cap (\cD_\I \times \cD_\I))$ and  $\bb(\geq_\I\cap (\cD_\fin\times \cD_\fin))$ describing the smallest sizes of unbounded from below with respect to the order $\leq_\I$ sets in
$\mathcal{D}_\fin$ and $\mathcal{D}_\I$, respectively. 
For a maximal ideal $\I$, these cardinals were investigated by M.~Canjar in connection with coinitial and cofinal subsets of the ultrapowers.

We show that  $\bb(\geq_\I\cap (\cD_\fin \times \cD_\fin)) =\bb$ for all ideals $\I$ with the Baire property and that $\aleph_1 \leq \bb(\geq_\I\cap (\cD_\I \times \cD_\I)) \leq\bb$ for all coanalytic weak P-ideals (this class contains all $\bf{\Pi^0_4}$ ideals). What is more, we give examples of Borel (even $\bf{\Sigma^0_2}$) ideals $\I$ with $\bb(\geq_\I\cap (\cD_\I \times \cD_\I))=\bb$ as well as with 
 $\bb(\geq_\I\cap (\cD_\I \times \cD_\I)) =\aleph_1$.

We also study cardinals 
$\bb(\geq_\I\cap (\cD_\J \times \cD_\K))$
describing the smallest sizes of sets in 
$\mathcal{D}_\K$ not bounded from below with respect to the preorder $\leq_\I$ by any member of $\mathcal{D}_\J$.
Our research is 
partially motivated by the study of ideal-QN-spaces: those cardinals describe the smallest size of a space which is not ideal-QN.
\end{abstract}

%%%%%%%%%%%%%%%%%%%%%%%%%%%%%%%%%%%%%%%%%%%%%%%%%%
%%%%% MAKE TITLE
%%%%%%%%%%%%%%%%%%%%%%%%%%%%%%%%%%%%%%%%%%%%%%%%%%

\maketitle
\tableofcontents

%%%%%%%%%%%%%%%%%%%%%%%%%%%%%%%%%%%%%%%%%%%%%%%%%%
%%%%% SECTION
%%%%%%%%%%%%%%%%%%%%%%%%%%%%%%%%%%%%%%%%%%%%%%%%%%

\section{Introduction}

For an ideal $\I$ on $\omega$ we denote $\mathcal{D}_\I=\{f\in\omega^\omega: f^{-1}[\{n\}]\in\I \text{ for every $n\in \omega$}\}$ (in particular, $\mathcal{D}_\fin$ denotes the family of all finite-to-one functions) and write $f\leq_\I g$ if $\{n\in\omega:f(n)>g(n)\}\in\I$, where $f,g\in\omega^\omega$. 
By $\bb(\geq_\I\cap (\cD_\J \times \cD_\K))$ we denote 
the smallest sizes of sets in 
$\mathcal{D}_\K$ not bounded from below with respect to the order $\leq_\I$ by any member of $\mathcal{D}_\J$. 
(We restrict  ourself to functions from $\cD_{\J}$  instead of all functions from $\omega^\omega$, because every subset of $\omega^\omega$ is bounded from below with respect to $\leq_\I$ by the constant zero function.)

In 1980s, Canjar~\cite{Canjar2,Canjar,CanjarPhD}
studied the smallest sizes of cofinal and coinitial subsets in ultrapowers $\omega^\omega/\cU$ ordered by $\leq_{\I}$ for ultrafilters $\cU=\I^*$. 
Among others, Conjar proved that 
consistently 
$\bb(\geq_\I\cap (\cD_\fin\times \cD_\fin))$
and 
$\bb(\geq_\I\cap (\cD_\I \times \cD_\I))$ can be equal to any regular cardinal between $\aleph_1$ and $\bnumber$ for some maximal ideal $\I$.

	In this paper, we examine cardinals 
$\bnumber(\geq_\I\cap (\cD_\K\times\cD_\J))$
for various triples of ideals $(\I,\J,\K)$, but we do not require from ideals $\I,\J$ and $\K$ to be maximal. 
We obtained some facts for arbitrary ideals $\I, \J$ and $\K$, however, the most interesting results we proved for Borel ideals.
For instance, we showed that there are Borel ideals $\I$ with 
$\bb(\geq_\I\cap (\cD_\I \times \cD_\I))=\aleph_1$ and 
$\bb(\geq_\I\cap (\cD_\fin \times \cD_\fin))=\bnumber$ provable in ZFC.

To make life easier, we restrict our study only to  eight cases with at most two different ideals among $\I,\J,\K$ and with at least one of them equal to the ideal $\fin$.
It will follow from results of Section~\ref{sec:basic-relations-between-the-bounding-numbers}
that   in fact  we can restrict our study only to four 
cases: $\bnumber(\geq_\I\cap (\cD_\I\times\cD_\I))$, $\bnumber(\geq_\I\cap (\cD_\fin\times\cD_\I))$, $\bnumber(\geq_\I\cap (\cD_\fin\times\cD_\fin))$ and $\bnumber(\geq_\fin\cap (\cD_\I\times\cD_\I))$.

The paper is organized in the following way.
In Section~\ref{sec:BoundingNumbers}
we show that $\bnumber(>_\I\cap (\cD_\K\times\cD_\J)) = \bb(\I,\J,\K)$ and 
$\bnumber(\geq_\I\cap (\cD_\K\times\cD_\J)) = \bb_s(\I,\J,\K)$, where $\bb(\I,\J,\K)$ and $\bb_s(\I,\J,\K)$
are cardinals considered by Filip\'{o}w and Stanszewski in \cite{MR3269495,MR3624786}. 
This provides us with a very useful combinatorial characterizations 
of the considered cardinals, which we use almost exclusively in the rest of the paper.

Section~\ref{sec:basic-relations-between-the-bounding-numbers} is devoted to showing the basic properties of considered $\bb$-like numbers. Also, we prove that $\bnumber(\geq_\fin\cap(\cD_\I\times\cD_\I))=\min\{\bb,\adds(\I)\}$ and present some diagrams summarizing basic relationships between considered $\bb$-like numbers

In Section~\ref{sec:Sums-and-products-of-ideals}, we calculate $\bb$-like numbers for direct sums and Fubini products of ideals. Moreover, we compute $\bb$-like numbers for not tall ideals. The results of that section are used in Section~\ref{sec:ConsistencyResults} to obtain examples of ideals with distinct values of various  $\bb$-like cardinals.

In Section~\ref{sec:NiceIdeals}, we examine $\bb$-like cardinals for nice ideals, where nice means ideals with the Baire property, $\omega$-diagonalizable ideals or definable ideals. We show that $\bnumber(\geq_\I\cap (\cD_\fin\times\cD_\fin)) =\bb$ for all ideals $\I$ with the Baire property and infer that for P-ideals with the Baire property all considered cardinals equal $\bb$. Moreover, we show that $\bnumber(\geq_\I\cap (\cD_\fin\times\cD_\I)) =\bb$ for coanalytic weak P-ideals. The latter gives an upper bound for $\bnumber(\geq_\I\cap (\cD_\I\times\cD_\I))$ in the case of coanalytic weak P-ideals (by a result of Debs and Saint
Raymond, this class contains all $\bf{\Pi^0_4}$ ideals).

Section~\ref{sec:bb-equals-aleph-one} is devoted to ideals $\I$ with $\bnumber(\geq_\I\cap (\cD_\I\times\cD_\I))=\aleph_1$. In particular, we give examples of such ideals among not tall $\bf{\Sigma^0_2}$ ideals and among tall $\bf{\Sigma^0_2}$ ideals.
 
In Section~\ref{sec:ConsistencyResults},
we show that consistently the values of considered cardinals can be pairwise distinct. 

In the literature, there are considered other ideal versions of the bounding number $\bnumber$. For instance, Farkas and Soukup~\cite{MR2537837} consider $\bnumber(\leq_\I\cap (\omega^\omega\times\omega^\omega)$ (Canjar in \cite{Canjar2} proved that consistently there are ideals $\I$ with $\bnumber(\leq_\I\cap (\omega^\omega\times\omega^\omega)\neq  \bnumber(\geq_\I\cap (\cD_\fin\times\cD_\fin))$; see also \cite{Canjar}), and 
Brendle and Mej\'{\i}a~\cite{MR3247032}
consider the number $\bnumber(\I)$ defined as the smallest $\kappa$ such that there is an $\I-(\omega,\kappa)$-gap in $(\cP(\omega),\subseteq^\I)$ and call it the Rothberger number of $\I$ (in general, $\bnumber(\I)\neq \bnumber(\geq_\I\cap (\cD_\I\times\cD_\I))$ because the former is not defined for maximal ideals whereas the latter is).

%%%%%%%%%%%%%%%%%%%%%%%%%%%%%%%%%%%%%%%%%%%%%%%%%%
%%%%% SECTION
%%%%%%%%%%%%%%%%%%%%%%%%%%%%%%%%%%%%%%%%%%%%%%%%%%

\section{Preliminaries}
\label{sec:preliminaries}

By $\omega$ we denote the set of all natural numbers.
We identify  a natural number $n$ with the set $\{0, 1,\dots , n-1\}$. (Thus, for instance, $n\setminus k$ means the set $\{i\in\omega: k\leq i<n\}$).
We write $A\subseteq^*B$ or $B\supseteq^* A$ if $A\setminus B$ is finite.
For a set $A$
and
a finite or infinite cardinal number  $\kappa$, we write $[A]^{\kappa} =\{B\subseteq A: |B|=\kappa\}$.

%%%%%%%%%%%%%%%%%%%%%%%%%%%%%%%%%%%%%%%%%%%%%%%%%%
%%%%% SUBSECTION
%%%%%%%%%%%%%%%%%%%%%%%%%%%%%%%%%%%%%%%%%%%%%%%%%%

\subsection{Ideals}

An \emph{ideal on a set $X$} (in short \emph{ideal}) is a  family $\I\subseteq\cP(X)$ that satisfies the following properties:
\begin{enumerate}
\item if $A,B\in \I$ then $A\cup B\in\I$,
\item if $A\subseteq B$ and $B\in\I$ then $A\in\I$,
\item $\I$ contains all finite subsets of $X$,
\item $X\notin\I$.
\end{enumerate}
For an ideal $\I$, 
we write $\I^+=\{A\subseteq X: A\notin\I\}$ and call it the \emph{coideal of $\I$}, 
and we write $\I^*=\{A\subseteq X: X\setminus A\in\I\}$ and call it the \emph{dual filter of $\I$}.

The ideal of all finite subsets of an infinite set $X$
is denoted by $\fin(X)$ (or $\fin$ for short).

Let $\cA\subseteq\cP(X)$. If $X$ cannot be covered by finitely many members of $\cA$ then the smallest ideal containing $\cA$ i.e.~the ideal $\I=\{B\subset X:  \exists A_1,\dots,A_n \in \cA\, (B\subseteq^*  A_1\cup\dots\cup A_n)\}$ is called \emph{the ideal generated by $\cA$}.

An ideal $\I$ on $X$ is \emph{P-ideal} (\emph{weak P-ideal}, resp.) if for any countable family $\cA\subseteq\I$ there is $B\in \I^*$ ($B\in \I^{+}$, resp.) such that $A\cap B\in \fin(X)$ for every $A\in \cA$. 

While P-ideals can be considered as a classical notion, weak P-ideals are gaining in popularity nowadays. Usefulness of weak P-ideals follows from the fact that this property can be characterized in various manners. For instance, an  ideal $\I$ is a weak P-ideal if and only if one of the following conditions hold:
\begin{itemize}
	\item $\I$ is not above the ideal $\fin\otimes\fin$ in the Kat\v{e}tov order (see e.g.~\cite{MR3423409} where also other orders are used for similar characterizations);
	
	\item $\I$ and $\I^*$ can be separated by an $F_\sigma$ set (\cite{MR2491780});
	
	\item Player I does not have a winning strategy in a game  introduce by Laflamme in \cite{Laflamme} (see Subsection~\ref{sec:coanalytic-ideals} of the present paper for a definition of this game); 

	\item  $\I$ is $\omega$-diagonalizable by $\I^*$-universal sets (see Subsection~\ref{sec:omega-diag} for a definition of this notion).
\end{itemize}
The last characterization is  technical, but its combinatorial character turns out to be very useful for working with weak P-ideals: we use it for examining some $\bnumber$-like numbers in Subsection~\ref{sec:omega-diag} (for other applications of this characterization see e.g.~\cite{MR2899832,MR2990109,MR3624783,MR2491780}).

We say that ideals $\I,\J$ on $X$ are \emph{orthogonal} (in short: $\I\perp\J$)
if there is $A\in \J$ with $X\setminus A\in \I$

An ideal $\I$ on $X$ is \emph{tall} if for every infinite $A\subseteq X$ there is an infinite $B\in\I$ such that $B\subset A$.

For an ideal $\I$ on $X$ and for $A\notin\I$ we define 
$\I\restriction A = \{B\subseteq A: B\in \I\}$. It is easy to see that $\I\restriction A$  is an ideal on $A$.

The vertical section of a set   $A\subseteq X\times Y$ at a point $x\in X$ is defined 
by
$(A)_x = \{y\in Y : (x,y)\in A\}$.

For ideals $\I,\J$ on $X$ and $Y$ respectively we define the following new ideals:
\begin{enumerate}
	\item
	$\I\oplus\J = \{A\subseteq X\times\{0\}\cup Y\times \{1\}: \{x:(x,0)\in A\}\in \I\land \{y:(y,1)\in A\}\in \J\}\}$,
	\item $\I\oplus\cP(\omega) = \{A\subseteq X\times\{0\}\cup \omega\times \{1\}: \{n:(n,0)\in A\}\in \I\}$,
	\item
	$\I\otimes \J = \{A\subseteq X\times Y: \{x:(A)_x\notin\J\}\in \I\}$,
	\item
$\I\otimes \{\emptyset\} = \{A\subseteq X\times \omega: \{x:(A)_x\ne\emptyset\}\in \I\}$.
	\item
$\{\emptyset\} \otimes  \J= \{A\subseteq \omega\times Y : (A)_x\in \J\text{ for all $x$}\}$.
\end{enumerate}

For any $n\geq 1$ we define the ideals $\fin^n$  in the following manner: $\fin^1=\fin$, and $\fin^{n+1}=\fin\otimes\fin^n$.

By identifying sets of natural numbers with their characteristic functions,
we equip $\cP(\omega)$ with the topology of the Cantor space $\{0,1\}^\omega$ and therefore
we can assign topological complexity to ideals on $\omega$.
In particular, an ideal $\I$ is Borel (has the Baire property) if $\I$ is Borel (has the Baire property) as a subset of the Cantor space. 

	In the sequel, we use the convention that $\min \emptyset = \continuum^+$.

For an ideal $\I$ on $\omega$ we define 
$$\adds(\I)=\min(\{|\cF|: \cF\subseteq\I\land \forall A\in \I\,\exists F\in \cF\,(|F\setminus A|=\aleph_0)\}.$$
Note that $\adds(\I)=\aleph_0$ for every non P-ideal
and
$\add(\I)\geq \aleph_1$ for every P-ideal. 
Moreover, it is easy to see that $\add^*(\I)=\continuum^+$ if and only if $\I=\{C\subseteq\omega: |C\setminus A|<\aleph_0\}$ for some $A\subset\omega$. For instance, $\adds(\fin)=\continuum^+$.

%%%%%%%%%%%%%%%%%%%%%%%%%%%%%%%%%%%%%%%%%%%%%%%%%%
%%%%% SECTION
%%%%%%%%%%%%%%%%%%%%%%%%%%%%%%%%%%%%%%%%%%%%%%%%%%

\section{Various bounding numbers}
\label{sec:BoundingNumbers}

%%%%%%%%%%%%%%%%%%%%%%%%%%%%%%%%%%%%%%%%%%%%%%%%%%
%%%%% SUBSECTION
%%%%%%%%%%%%%%%%%%%%%%%%%%%%%%%%%%%%%%%%%%%%%%%%%%

\subsection{The ordinary bounding number}

For $f,g\in \omega^\omega$ we write $f\leq^*g$ if $f(n)\leq g(n)$ for all but finitely many $n\in \omega$.
The \emph{bounding number} $\bnumber$ is the smallest size of $\leq^*$-unbounded subset of $\omega^\omega$  i.e.
$$\bnumber = \min\{|\cF|: \cF\subseteq\omega^\omega\land \neg(\exists g\in \omega^\omega\forall f\in \cF\, (f\leq^* g))\}.$$
The \emph{dominating number} $\dnumber$ is the smallest size of $\leq^*$-dominating subset of $\omega^\omega$  i.e.
$$\dnumber = \min\{|\cF|: \cF\subseteq\omega^\omega\land \forall g\in \omega^\omega\exists f\in \cF\, (g\leq^* f)\}.$$

%%%%%%%%%%%%%%%%%%%%%%%%%%%%%%%%%%%%%%%%%%%%%%%%%%
%%%%% SUBSECTION
%%%%%%%%%%%%%%%%%%%%%%%%%%%%%%%%%%%%%%%%%%%%%%%%%%

\subsection{The bounding numbers \`{a} la Vojt\'{a}\v{s}}

If $R$ is a binary relation, then by $\dom(R)$ and $\ran(R)$ we denote the domain and range of $R$ respectively i.e.
$\dom(R)=\{x:\exists y\, (x,y)\in R\}$ and $\ran(R) = \{y: \exists x\, (x,y)\in R\}$.
A set $B\subseteq \dom(R)$ is called \emph{$R$-unbounded} if for every $y\in \ran(R)$ there is $x\in B$ with $(x,y)\notin R$.
A set $D\subseteq \ran(R)$ is called \emph{$R$-dominating} if for every $x\in \dom(R)$ there is $y\in D$ with $(x,y)\in R$.

	\begin{definition}[Vojt\'{a}\v{s}~\cite{MR1234291}]
Let $R$ be a binary relation.
\begin{equation*}
\begin{split}
	\bnumber(R) & = \min(\{|B|: \text{$B$ is $R$-unbounded set}\}.\\
\dnumber(R) & = \min(\{|D|: \text{$D$ is $R$-dominating set}\}.
\end{split}
\end{equation*}
\end{definition}

It is easy to see that 
the bounding  number $\bnumber$ is equal to the bounding number of the relation
$\leq^*\cap (\omega^\omega\times\omega^\omega)$
i.e.
$\bnumber = \bnumber(\leq^*\cap (\omega^\omega\times\omega^\omega))$, and similarly 
$\dnumber = \dnumber(\leq^*\cap (\omega^\omega\times\omega^\omega))$.

%%%%%%%%%%%%%%%%%%%%%%%%%%%%%%%%%%%%%%%%%%%%%%%%%%
%%%%% SUBSECTION
%%%%%%%%%%%%%%%%%%%%%%%%%%%%%%%%%%%%%%%%%%%%%%%%%%

\subsection{The bounding numbers \`{a} la Canjar}

\begin{definition}
For an ideal $\I$ we define the relation $\leq_\I = \{(f,g)\in \omega^\omega\times \omega^\omega: \{n\in \omega: f(n)>g(n)\}\in \I\}$. We write $f\leq_\I g$ if $(f,g)\in \leq_\I$. 
In a similar manner we define $<_\I$, $\geq_\I$ and $>_\I$.
\end{definition}

\begin{definition}
For an ideal $\I$ we define
$$\cD_\I
=
\{f\in\omega^\omega: f^{-1}[\{n\}]\in\I \text{ for every $n\in \omega$}\}.$$
\end{definition}

In \cite{Canjar2,Canjar,CanjarPhD}, Canjar
studied cardinals $\bb(\geq_\I\cap (\cD_\I\times \cD_\I))$ and $\bb(\geq_\I\cap (\cD_\fin\times \cD_\fin))$ for some maximal ideals $\I$. In this paper we will examine the bounding numbers 
$\bnumber(>_\I\cap (\cD_\K\times\cD_\J))$ and 
$\bnumber(\geq_\I\cap (\cD_\K\times\cD_\J))$ for various triples of ideals $(\I,\J,\K)$.

\begin{proposition}
\label{prop:cofinality-of-b-number}
Let $\I,\J,\K$ be ideals on $\omega$.
\begin{enumerate}

\item 
$\bnumber(>_\I\cap (\cD_\K\times\cD_\J))\leq \bnumber(\geq_\I\cap (\cD_\K\times\cD_\J))$.\label{prop:cofinality-of-b-number-inequality}

\item $\bnumber(\geq_\I\cap (\cD_\J\times\cD_\J)) \leq \cf(\bnumber(\geq_\I\cap (\cD_\K\times\cD_\J)))$.\label{prop:cofinality-of-b-number-cofinality}

\item $\bnumber(>_\I\cap (\cD_\J\times\cD_\J)) \leq \cf(\bnumber(>_\I\cap (\cD_\K\times\cD_\J)))$.\label{prop:cofinality-of-b-number-cofinality2}

\item If $\J\subseteq\K$, then $\bnumber(\geq_\I\cap (\cD_\K\times\cD_\J))$ and $\bnumber(>_\I\cap (\cD_\K\times\cD_\J))$ are regular cardinals.\label{prop:cofinality-of-b-number-regular}

\end{enumerate}
\end{proposition}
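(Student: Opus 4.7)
The plan is to get (1) as a direct comparison of relations, (2) and (3) from the standard cofinality-of-bounding-number argument adapted to $\leq_\I$ and $<_\I$, and (4) as a consequence of (2) combined with a comparison coming from $\cD_\J\subseteq\cD_\K$.

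For (1), I would observe that $g<_\I f$ is formally stronger than $g\leq_\I f$, since $\{n:g(n)\geq f(n)\}\supseteq\{n:g(n)>f(n)\}$; hence any $\geq_\I$-lower bound of a set is also a $>_\I$-lower bound. Therefore every $B\subseteq\cD_\K$ that is $\geq_\I$-unbounded below by $\cD_\J$ is automatically $>_\I$-unbounded below by $\cD_\J$, which yields the inequality.

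For (2), I would set $\kappa=\bnumber(\geq_\I\cap(\cD_\K\times\cD_\J))$, fix a witnessing $B\subseteq\cD_\K$ with $|B|=\kappa$, and decompose $B=\bigcup_{\alpha<\cf(\kappa)}B_\alpha$ with $|B_\alpha|<\kappa$. By minimality of $\kappa$ each $B_\alpha$ admits some $g_\alpha\in\cD_\J$ with $g_\alpha\leq_\I f$ for every $f\in B_\alpha$. I then claim $\{g_\alpha:\alpha<\cf(\kappa)\}\subseteq\cD_\J$ is $\geq_\I$-unbounded below by $\cD_\J$: if some $g\in\cD_\J$ satisfied $g\leq_\I g_\alpha$ for every $\alpha$, transitivity of $\leq_\I$ (immediate from subadditivity of $\I$) would give $g\leq_\I f$ for all $f\in B$, contradicting the choice of $B$. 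This yields $\bnumber(\geq_\I\cap(\cD_\J\times\cD_\J))\leq\cf(\kappa)$. The proof of (3) is verbatim the same after replacing $\leq_\I$ by $<_\I$, whose transitivity is proved identically. For (4), I would note that $\J\subseteq\K$ gives $\cD_\J\subseteq\cD_\K$ directly from the definition of $\cD_{\cdot}$. Then any $\geq_\I$-unbounded $B\subseteq\cD_\J$ is a fortiori a $\geq_\I$-unbounded subset of $\cD_\K$ (the witnessing lower bounds $g$ range over the same set $\cD_\J$ in both cases), so $\bnumber(\geq_\I\cap(\cD_\K\times\cD_\J))\leq\bnumber(\geq_\I\cap(\cD_\J\times\cD_\J))$. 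Combining with (2) gives $\bnumber(\geq_\I\cap(\cD_\K\times\cD_\J))\leq\cf(\bnumber(\geq_\I\cap(\cD_\K\times\cD_\J)))$, i.e., regularity, and the strict case is handled by the same argument using (3).

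I do not expect a serious obstacle: beyond routine bookkeeping, the only real content is transitivity of $\leq_\I$ and $<_\I$, together with the observation $\J\subseteq\K\Rightarrow\cD_\J\subseteq\cD_\K$. The most pedantic point is verifying in (2) that the witnessing lower bound $g_\alpha$ for each small piece $B_\alpha$ really lies in $\cD_\J$, but this is immediate from the very definition of $\bnumber(\geq_\I\cap(\cD_\K\times\cD_\J))$ since the bounding functions in that number range precisely over $\cD_\J$.
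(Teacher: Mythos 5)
Your proposal is correct and takes essentially the same route as the paper: for (2)--(3) you split a minimal-size unbounded family into $\cf(\kappa)$-many pieces, bound each piece from below in $\cD_\J$ by minimality, and use transitivity of $\leq_\I$ (resp.\ $<_\I$), and for (4) you combine this with the monotonicity $\bnumber(\geq_\I\cap(\cD_\K\times\cD_\J))\leq\bnumber(\geq_\I\cap(\cD_\J\times\cD_\J))$ coming from $\cD_\J\subseteq\cD_\K$, exactly as the paper does. The only blemish is a backwards clause in (1): it is the $>_\I$-lower bounds that are automatically $\geq_\I$-lower bounds (not the other way around), but this correct implication is precisely what your first observation gives, and your final conclusion that every $\geq_\I$-unbounded set is $>_\I$-unbounded, hence the stated inequality, is right.
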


\begin{proof}
(\ref{prop:cofinality-of-b-number-inequality}) Obvious.

(\ref{prop:cofinality-of-b-number-cofinality})
Let $\cF\subset \cD_\K$ be an $\geq_\I\cap (\cD_\K\times\cD_\J)$-unbounded family of cardinality 
$\bnumber(\geq_\I\cap (\cD_\K\times\cD_\J))$ 
and $\cF = \bigcup_{\alpha<\cf(\bnumber(\geq_\I\cap (\cD_\K\times\cD_\J))}\cF_\alpha$ with $|\cF_\alpha|<\bnumber(\geq_\I\cap (\cD_\K\times\cD_\J))$ for every $\alpha$.

For every $\alpha$ we find $g_\alpha\in \cD_\J$ with $f\geq_\I g_\alpha$ for every $f\in \cF_\alpha$.
Let $\cG=\{g_\alpha:\alpha<\cf(\bnumber(\geq_\I\cap (\cD_\K\times\cD_\J))\}$.

Suppose to the contrary that 
$\cf(\bnumber(\geq_\I\cap (\cD_\K\times\cD_\J)))<\bnumber(\geq_\I\cap (\cD_\J\times\cD_\J))$.

Since $\cG\subseteq\cD_\J$ and $|\cG|\leq\cf(\bnumber(\geq_\I\cap (\cD_\K\times\cD_\J))<\bnumber(\geq_\I\cap (\cD_\J\times\cD_\J)$, there is $g\in \cD_\J$ with $g_\alpha\geq_\I g$ for every $\alpha$.
Consequently, $f\geq_\I g$ for every $f\in \cF$, a contradiction.

(\ref{prop:cofinality-of-b-number-cofinality2})
This can be proved in a similar way as item \ref{prop:cofinality-of-b-number-cofinality}.

(\ref{prop:cofinality-of-b-number-regular})
If $\J\subseteq\K$, then $\bnumber(\geq_\I\cap (\cD_\K\times\cD_\J)) \leq \bnumber(\geq_\I\cap (\cD_\J\times\cD_\J))$ and $\bnumber(>_\I\cap (\cD_\K\times\cD_\J)) \leq \bnumber(>_\I\cap (\cD_\J\times\cD_\J))$. 
Now using (\ref{prop:cofinality-of-b-number-cofinality}) and (\ref{prop:cofinality-of-b-number-cofinality2}), we get 
$\bnumber(\geq_\I\cap (\cD_\K\times\cD_\J)) \leq \bnumber(\geq_\I\cap (\cD_\J\times\cD_\J))\leq \cf(\bnumber(\geq_\I\cap (\cD_\K\times\cD_\J)))$ and $\bnumber(>_\I\cap (\cD_\K\times\cD_\J)) \leq \bnumber(>_\I\cap (\cD_\J\times\cD_\J))\leq \cf(\bnumber(>_\I\cap (\cD_\K\times\cD_\J)))$.
\end{proof}

%%%%%%%%%%%%%%%%%%%%%%%%%%%%%%%%%%%%%%%%%%%%%%%%%%
%%%%% SUBSECTION
%%%%%%%%%%%%%%%%%%%%%%%%%%%%%%%%%%%%%%%%%%%%%%%%%%

\subsection{The bounding numbers \`{a} la Staniszewski}

\begin{definition}
	Let $\I$ be an ideal on $\omega$.
	By $ \widehat{\cP}_\I$ we will denote the family of all sequences $\langle A_n : n\in\omega\rangle$ such that 
	$A_n\in \I$ for  all $n\in\omega$
	and
	$A_n\cap A_k=\emptyset$ for $n\neq k$.
	By $ \cP_\I$ we will denote the family of all sequences $\langle A_n : n\in\omega\rangle$ such that
	$\langle A_n : n\in\omega\rangle \in \widehat{\cP}_\I$ and 
	$\bigcup\{A_n: n\in \omega\} = \omega$.
\end{definition}

\begin{definition}
	For ideals $\I$, $\J$ and $\K$ on $\omega$
	we define
	\begin{equation*}
		\begin{split}
			\bb(\I,\J,\K)  & = 
			\min \left\{|\cE|:\cE\subseteq\widehat{\cP}_\K \land \forall_{\langle A_n\rangle\in\cP_\J}\, \exists_{\langle E_n\rangle\in\cE} \, \bigcup_{n\in\omega}\left(A_n\cap \bigcup_{i\leq n}E_i\right)\notin\I\right\},\\
			\bnumber_s(\I,\J,\K) & = 
			\min \left\{|\cE|:\cE\subseteq\widehat{\cP}_\K \land \forall_{\langle A_n\rangle\in\cP_\J}\, \exists_{\langle E_n\rangle\in\cE} \, \bigcup_{n\in\omega}\left(A_{n+1}\cap \bigcup_{i\leq n}E_i\right)\notin\I\right\}.
		\end{split}
	\end{equation*}
\end{definition}

\begin{remark}
	A topological space $X$ is a QN-space if it does not distinguish pointwise and quasi-normal convergence of sequences of real-valued continuous functions defined on $X$.
	QN-spaces were introduced by Bukovsk\'{y}, Rec\l{}aw and Repick\'{y} in \cite{MR1129696}, and were  thoroughly examined in papers \cite{MR2463820,MR2778559,MR2294632,MR1129696,MR1815270,MR1477547,MR1800160,MR2280899,MR2881299}.
	The research on ideal-QN-spaces was initiated by Das and Chandra in \cite{MR3038073} and has been continued in \cite{MR3622377,MR3784399,MR3924519,MR3423409}. 
	
	In \cite{MR1129696}, the authors proved that 
	the smallest size of non-QN-space is equal to the bounding number $\bnumber$. 
	The cardinal numbers  $\bb(\I,J,K)$ and $\bb_s(\I,\J,\K)$ were introduced by Filip\'{o}w and Staniszewski~\cite{MR3269495,MR3624786} to characterize the smallest size of a space which is not ideal-QN.
	Recently, Repick\'{y}~\cite{Repicky-1,Repicky-2} thoroughly examined ideal-QN spaces and, among others, characterized the smallest size of non-ideal-QN-spaces in terms of the cardinal $\bnumber(\geq_\I\cap (\cD_\cK\times\cD_\cJ))$. Taking into account Theorem~\ref{thm:Canjar-bNumber-characterization-by-Staniszewski-bNumber}, we see that both Repick\'{y} and Staniszewski obtained the same conclusion but coming from different directions.

	In \cite{MR3423409}, \v{S}upina introduced the cardinal $\kappa(\I,\J)$ which is equal to $\bnumber(\I,\J,\I)$.
\end{remark}

\begin{proposition}
	\label{prop:bNumber-characterization}
	Let $\I, \J$ and $\K$ be ideals on $\omega$.
	\begin{enumerate}
		\item 
		$\bnumber(\I,\J,\K)\leq\bnumber_s(\I,\J,\K)$.\label{prop:bNumber-characterization-inequality}
		
		\item If $\J\cap \K\subseteq \I$, then 
		$\bnumber(\I,\J,\K) = \bnumber_s(\I,\J,\K)$.\label{prop:bNumber-characterization-equality}
		
		\item 
		$\bnumber(\fin,\fin,\fin) = \bnumber_s(\fin,\fin,\fin) =\bnumber$.\label{prop:bNumber-characterization-FIN}
		
	\end{enumerate}
\end{proposition}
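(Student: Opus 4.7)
Parts (1) and (2) will be proved by showing that a single witnessing family for one cardinal can serve as a witness for the other (possibly after a harmless reindexing of the input partition). Part (3) will be handled by translating the combinatorics of $\widehat{\cP}_\fin$-families to the familiar $\leq^*$-domination language.

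For (1), I would take a witnessing family $\cE\subseteq\widehat{\cP}_\K$ for $\bb_s(\I,\J,\K)$ and show that the \emph{same} $\cE$ witnesses $\bb(\I,\J,\K)$. Given $\langle A_n\rangle\in\cP_\J$, the defining property of $\bb_s$ produces $\langle E_n\rangle\in\cE$ with $\bigcup_{n}\bigl(A_{n+1}\cap \bigcup_{i\leq n}E_i\bigr)\notin\I$. Since $A_{n+1}\cap\bigcup_{i\leq n}E_i\subseteq A_{n+1}\cap\bigcup_{i\leq n+1}E_i$, reindexing $m=n+1$ embeds this set into $\bigcup_{m}\bigl(A_m\cap\bigcup_{i\leq m}E_i\bigr)$, which must therefore also lie outside $\I$. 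This step is routine.

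For (2), conversely, take $\cE$ witnessing $\bb(\I,\J,\K)$. Given $\langle A_n\rangle\in\cP_\J$, I would merge the first two blocks: let $\widetilde{A}_0=A_0\cup A_1$ and $\widetilde{A}_n=A_{n+1}$ for $n\geq 1$. Then $\langle\widetilde{A}_n\rangle\in\cP_\J$ (the ideal $\J$ is closed under finite unions). Apply the $\bb$-property to obtain $\langle E_n\rangle\in\cE$ with $\bigcup_{n}\bigl(\widetilde{A}_n\cap\bigcup_{i\leq n}E_i\bigr)\notin\I$. A direct expansion shows
\begin{equation*}
\bigcup_{n}\Bigl(\widetilde{A}_n\cap\bigcup_{i\leq n}E_i\Bigr)
=(A_0\cap E_0)\cup \bigcup_{n\geq 0}\Bigl(A_{n+1}\cap\bigcup_{i\leq n}E_i\Bigr).
\end{equation*}
The set $A_0\cap E_0$ lies in $\J\cap\K$, hence in $\I$ by hypothesis. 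Removing a set in $\I$ cannot move us from outside $\I$ to inside $\I$, so $\bigcup_{n\geq 0}\bigl(A_{n+1}\cap\bigcup_{i\leq n}E_i\bigr)\notin\I$, witnessing the $\bb_s$-condition. This is the one place the assumption $\J\cap\K\subseteq\I$ is used, and the hypothesis being applied to a \emph{single} set $A_0\cap E_0$ (not a countable union) is what makes the argument work without any $\sigma$-additivity of $\I$.

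For (3), since $\fin\cap\fin=\fin\subseteq\fin$, part (2) gives $\bb(\fin,\fin,\fin)=\bb_s(\fin,\fin,\fin)$, so I only need $\bb(\fin,\fin,\fin)=\bb$. For $\bb(\fin,\fin,\fin)\leq\bb$, take a $\leq^*$-unbounded family of strictly increasing functions $\{f_\alpha:\alpha<\bb\}$ and set $E^\alpha_n=\{f_\alpha(n)\}$. Given a partition $\langle A_n\rangle\in\cP_\fin$, the function $m(i)=\max\bigcup_{j<i}A_j$ is well-defined and finite, and if $g(f_\alpha(i))<i$ held for almost all $i$ (where $g(k)$ is the block index of $k$) then $f_\alpha\leq^* m$, contradicting unboundedness; hence some $\alpha$ gives $f_\alpha(i)\in A_{g(f_\alpha(i))}$ with $g(f_\alpha(i))\geq i$ infinitely often, yielding infinitely many $n$ for which $A_n\cap\bigcup_{i\leq n}E^\alpha_i\neq\emptyset$. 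For $\bb\leq\bb(\fin,\fin,\fin)$, from a witness $\{\langle E^\alpha_n\rangle\}$ set $f_\alpha(n)=\max E^\alpha_n$; if some $g\in\omega^\omega$ dominated all $f_\alpha$, then a sufficiently fast interval partition $A_n=[h(n),h(n+1))$ with $h(n)>g(n)$ would eventually satisfy $\max E^\alpha_i\leq g(i)\leq g(n)<h(n)=\min A_n$ for $i\leq n$, leaving $A_n\cap\bigcup_{i\leq n}E^\alpha_i=\emptyset$ for cofinitely many $n$, contradicting the witness property.

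\paragraph{Main obstacle.} The only delicate point is part (2): one must find a way to recast a $\cP_\J$-partition so that the $\bb$-definition produces information usable for $\bb_s$. The merging trick $\widetilde{A}_0=A_0\cup A_1$ is natural but forces the leftover term $A_0\cap E_0$, and controlling this term is exactly where the hypothesis $\J\cap\K\subseteq\I$ is essential.
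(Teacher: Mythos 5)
Your proposal is correct, and for parts (1) and (2) it is essentially the paper's argument in mirror image: the paper keeps the input partition fixed and replaces each witness $\langle E_n\rangle$ by $\langle E_0\cup E_1,E_2,E_3,\dots\rangle$, while you keep the witnessing family fixed and replace the input partition $\langle A_n\rangle$ by $\langle A_0\cup A_1,A_2,A_3,\dots\rangle$; in both versions the entire weight of the hypothesis $\J\cap\K\subseteq\I$ is carried by the single leftover set $A_0\cap E_0$, so the proofs are interchangeable (yours has the mild advantage that one and the same family $\cE$ literally witnesses both cardinals). The genuine divergence is in part (3): the paper just cites \cite{MR3269495,MR3624786}, whereas you give a self-contained proof by coding an unbounded family as the singleton sequences $E^\alpha_n=\{f_\alpha(n)\}$ and, conversely, extracting $f_\alpha(n)=\max E^\alpha_n$ and testing against a fast interval partition; this is in substance the specialization to $\I=\J=\K=\fin$ of the translation performed in Theorem~\ref{thm:Canjar-bNumber-characterization-by-Staniszewski-bNumber}, and it buys independence from the cited sources at no structural cost. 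In writing (3) up, smooth a few harmless loose ends: set $m(0)$ (and any $\max\emptyset$) equal to $0$; take the would-be dominating $g$ nondecreasing so that $g(i)\leq g(n)$; demand $h(n)>g(n)$ only for $n\geq 1$, since $h(0)=0$ is needed to get a partition of $\omega$; and note that for the finitely many indices $i$ below the $\leq^*$-threshold of a given $\alpha$ the sets $E^\alpha_i$ are fixed finite sets, so they also miss $A_n$ for all large $n$ --- this is what justifies ``for cofinitely many $n$'' and makes $\bigcup_{n\in\omega}\bigl(A_n\cap\bigcup_{i\leq n}E^\alpha_i\bigr)$ finite for every $\alpha$, contradicting the witness property.
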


\begin{proof}
	(\ref{prop:bNumber-characterization-inequality})
	Obvious. 
	
	\smallskip
	
	(\ref{prop:bNumber-characterization-equality})
	It is enough to show 
	$\bnumber(\I,\J,\K)\geq \bnumber_s(\I,\J,\K)$.
	
	Let $\cE\subseteq\widehat{\cP}_\K$ be such that for every $\langle A_n:n\in\omega\rangle\in\mathcal{P}_\J$ there is $\langle E_n\rangle \in \cE$ with
	$\bigcup_{n\in\omega}(A_n\cap \bigcup_{i\leq n}E_i)\notin\I$.
	
	For every $E = \langle E_n\rangle\in \cE$ we define $F^E_0=E_0\cup E_1$ and $F^E_n=E_{n+1}$ for $n\geq 1$.
	Then $\cF=\{\langle F^E_n\rangle:E\in \cE\}\subseteq\widehat{\cP}_\K$ and $|\cF|\leq |\cE|$.
	
	Let $\langle A_n\rangle\in \cP_\J$.
	Then there is $\langle E_n\rangle \in \cE$ with
	$B = \bigcup_{n\in\omega}(A_n\cap \bigcup_{i\leq n}E_i)\notin\I$.
	
	Since $A_0\cap E_0\in \J\cap \K\subseteq\I$
	and 
	$
	\bigcup_{n\in\omega}(A_{n+1}\cap \bigcup_{i\leq n}F^E_i) = 
	(\bigcup_{n\in\omega}(A_{n}\cap \bigcup_{i\leq n}E_i))\setminus (A_0\cap E_0)
	= B\setminus (A_0\cap E_0)$, we get $\bigcup_{n\in\omega}(A_{n+1}\cap \bigcup_{i\leq n}F^E_i)  
	\notin\I$, and the proof is finished.

	\smallskip

	(\ref{prop:bNumber-characterization-FIN})
	It was proved in \cite{MR3269495,MR3624786}.	
\end{proof}

\begin{proposition}
	\label{prop:bNumber-monotonicity}
	Let $\I,\I',\J,\J',\K,\K'$ be ideals on $\omega$.
	\begin{enumerate}
		\item If $\I\subseteq\I'$, then 
		$\bb(\I,\J,\K)\leq \bb(\I',\J,\K)$
		and
		$\bb_s(\I,\J,\K)\leq \bb_s(\I',\J,\K)$.
		\item If $\J\subseteq\J'$, then 
		$\bb(\I,\J,\K)\leq \bb(\I,\J',\K)$
		and
		$\bb_s(\I,\J,\K)\leq \bb_s(\I,\J',\K)$.
		\item If $\K\subseteq\K'$, then 
		$\bb(\I,\J,\K)\geq \bb(\I,\J,\K')$
		and
		$\bb_s(\I,\J,\K)\geq \bb_s(\I,\J,\K')$.
	\end{enumerate}
\end{proposition}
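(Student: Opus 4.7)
The three items are direct consequences of the definitions of $\bb(\I,\J,\K)$ and $\bb_s(\I,\J,\K)$; in each case the hypothesis produces a containment which either enlarges the pool of admissible witnessing families or enlarges the family of sequences against which we must test, and the inequality then falls out by monotonicity of the $\min$ taken over a suitable class. My plan is to prove all six inequalities in parallel, handling $\bb$ and $\bb_s$ simultaneously since the quantifier structure is identical and only the index shift from $n$ to $n+1$ differs.

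For item~(1), I would start by unpacking the defining condition: if $B\notin\I'$ and $\I\subseteq\I'$, then $B\notin\I$. So, taking a witnessing family $\cE\subseteq\widehat{\cP}_\K$ of minimal size for $\bb(\I',\J,\K)$, for every $\langle A_n\rangle\in\cP_\J$ there is $\langle E_n\rangle\in\cE$ with $\bigcup_n(A_n\cap\bigcup_{i\leq n}E_i)\notin\I'$, hence $\notin\I$; the same $\cE$ then witnesses $\bb(\I,\J,\K)\leq|\cE|$. The $\bb_s$ version is verbatim the same with $A_n$ replaced by $A_{n+1}$.

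For item~(2), I would observe that $\J\subseteq\J'$ forces $\cP_\J\subseteq\cP_{\J'}$, because each $A_n\in\J$ is also in $\J'$ while the disjointness/covering clauses are unaffected. A witnessing family for $\bb(\I,\J',\K)$, which by definition handles every sequence in $\cP_{\J'}$, in particular handles every sequence in $\cP_\J$, and therefore also witnesses $\bb(\I,\J,\K)$. For item~(3), the same style of argument works on the other side: $\K\subseteq\K'$ gives $\widehat{\cP}_\K\subseteq\widehat{\cP}_{\K'}$, so any witness $\cE\subseteq\widehat{\cP}_\K$ for $\bb(\I,\J,\K)$ is automatically a subfamily of $\widehat{\cP}_{\K'}$ and bounds $\bb(\I,\J,\K')$ from above, producing the reversed inequality.

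There is no real obstacle in this proof; the only thing to double-check is that the containment $\cP_\J\subseteq\cP_{\J'}$ in item~(2) really goes through for $\cP_\J$ (covering $\omega$) rather than merely for $\widehat{\cP}_\J$, and that in item~(3) we do not need the covering property, which is exactly why $\widehat{\cP}_\K$ appears. Both are true, and the three items combine cleanly into a single short proof.
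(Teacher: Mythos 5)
Your argument is correct and is exactly the routine verification the paper leaves implicit (its proof reads only ``Straightforward''): item (1) uses that $B\notin\I'$ implies $B\notin\I$, item (2) uses $\cP_\J\subseteq\cP_{\J'}$, and item (3) uses $\widehat{\cP}_\K\subseteq\widehat{\cP}_{\K'}$, with the $\bb_s$ versions identical up to the index shift. The only degenerate case, when no witnessing family exists and the minimum is $\continuum^+$ by the paper's convention, is also handled automatically, so nothing is missing.
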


\begin{proof}
	Straightforward.
\end{proof}

In the sequel we will often use the following equivalent forms of $\bb(\I,\J,\K)$ without any reference.

\begin{proposition}
	\label{prop:bNumber-characterization-partitions}
	Let $\I$, $\J$ and $\K$ be ideals on $\omega$.
	Then $\bb(\I,\J,\K)  = \bb_1(\I,\J,\K)=\bb_2(\I,\J,\K)=\bb_3(\I,\J,\K)$, where 
	\begin{equation*}
		\begin{split}
			\bb_1(\I,\J,\K)  =& 
			\min \left\{|\cE|:\cE\subseteq\cP_\K \land \forall_{\langle A_n\rangle\in\cP_\J}\, \exists_{\langle E_n\rangle\in\cE} \, \bigcup_{n\in\omega}\left(A_n\cap \bigcup_{i\leq n}E_i\right)\notin\I\right\},\\
			\bb_2(\I,\J,\K)  = &
			\min \left\{|\cG|:\cG\subseteq\K^\omega \land \forall_{\langle A_n\rangle \in\cP_\J} \, \exists_{\langle G_n\rangle \in\cG} \, \bigcup_{n\in\omega}\left(A_n\cap G_n\right)\notin\I\right\}, \\
			\bb_3(\I,\J,\K)  =  &
			\min \left\{|\cE|:\cE\subseteq\widehat{\cP}_\K \land \forall_{\langle A_n\rangle\in\cP_\J} \, \exists_{\langle E_n\rangle\in\cE} \, \bigcup_{k\in\omega}\left(E_k\setminus \bigcup_{n<k}A_n\right)\notin\I\right\}. \\
		\end{split}
	\end{equation*}
\end{proposition}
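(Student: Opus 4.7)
The plan is to establish the four equalities by a short chain, exploiting that the definitions differ only in the class of admissible witness sequences ($\widehat{\cP}_\K$, $\cP_\K$, or $\K^\omega$) and in the precise form of the ``bad set'' required to lie outside $\I$.

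First I would observe that $\bb=\bb_3$ is an \emph{identity}, not merely an inequality. Given any $\langle A_n\rangle\in\cP_\J$ and any $\langle E_n\rangle\in\widehat{\cP}_\K$, the fact that $\{A_n\}$ partitions $\omega$ gives $\omega\setminus\bigcup_{n<k}A_n=\bigcup_{n\geq k}A_n$, so interchanging the order of union yields
\[
\bigcup_{k\in\omega}\Bigl(E_k\setminus\bigcup_{n<k}A_n\Bigr)
=\bigcup_{k\in\omega}\bigcup_{n\geq k}(E_k\cap A_n)
=\bigcup_{n\in\omega}\Bigl(A_n\cap\bigcup_{i\leq n}E_i\Bigr).
\]
Thus the two conditions describe literally the same set, and $\bb_3(\I,\J,\K)=\bb(\I,\J,\K)$.

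For $\bb=\bb_1$, the inequality $\bb\leq\bb_1$ is trivial since $\cP_\K\subseteq\widehat{\cP}_\K$ and the condition is unchanged. For $\bb_1\leq\bb$, given a witness $\cE\subseteq\widehat{\cP}_\K$ for $\bb$, I would ``fill in'' each sequence $\langle E_n\rangle\in\cE$ to a true partition $\langle F_n\rangle\in\cP_\K$: letting $R=\omega\setminus\bigcup_n E_n$, either put $F_0=E_0\cup R$ and $F_n=E_n$ for $n\geq 1$ when $R$ is finite (using $\fin\subseteq\K$), or enumerate $R=\{r_0,r_1,\dots\}$ and set $F_n=E_n\cup\{r_n\}$ when $R$ is infinite. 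Each $F_n$ is still in $\K$, the $F_n$ remain pairwise disjoint and cover $\omega$, and $E_i\subseteq F_i$ gives $\bigcup_n(A_n\cap\bigcup_{i\leq n}E_i)\subseteq\bigcup_n(A_n\cap\bigcup_{i\leq n}F_i)$, so the witnessing condition passes from $\langle E_n\rangle$ to $\langle F_n\rangle$.

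For $\bb=\bb_2$, the inequality $\bb_2\leq\bb$ is obtained by sending $\langle E_n\rangle\in\widehat{\cP}_\K$ to $G_n=\bigcup_{i\leq n}E_i\in\K$, which leaves the condition literally unchanged. For $\bb\leq\bb_2$, given a witness $\cG\subseteq\K^\omega$, I would disjointify via $E_n=G_n\setminus\bigcup_{i<n}G_i$: each $E_n\subseteq G_n$ lies in $\K$ and the $E_n$ are pairwise disjoint, while a telescoping induction shows $\bigcup_{i\leq n}E_i=\bigcup_{i\leq n}G_i\supseteq G_n$, so $A_n\cap\bigcup_{i\leq n}E_i\supseteq A_n\cap G_n$ and the ``not in $\I$'' condition is preserved. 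No step is a genuine obstacle; the only mild care needed is checking that the constructions keep everything inside $\widehat{\cP}_\K$ or $\cP_\K$ and keep the relevant unions monotone, which follows from closure of $\K$ under finite modifications.
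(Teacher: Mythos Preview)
Your proof is correct and follows essentially the same approach as the paper's: the set identity for $\bb=\bb_3$, the ``fill in the remainder'' trick for $\bb_1\leq\bb$, and the passage between $\widehat{\cP}_\K$ and $\K^\omega$ via $G_n=\bigcup_{i\leq n}E_i$ one way and disjointification the other way are exactly the paper's arguments. The only cosmetic difference is that the paper handles the filling-in uniformly (enumerating the remainder and setting $F^E_n=E_n$ for large $n$ when the remainder is finite) rather than splitting into cases, but the idea is identical.
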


\begin{proof}
	First we show $\bb(\I,\J,\K)=\bb_1(\I,\J,\K)$.
	Since $\bb(\I,\J,\K)\leq\bb_1(\I,\J,\K)$ is obvious, we only show $\bb(\I,\J,\K)\geq \bb_1(\I,\J,\K)$.
	Let 
	$\cE\subseteq\widehat{\cP_\K}$ 
	be such that  for every $\langle A_n\rangle\in\mathcal{P}_\J$ there is $\langle E_n\rangle\in\cE$ with 
	$\bigcup_{n\in\omega}(A_n\cap \bigcup_{i\leq n}E_i)\notin\I$.
	For every $E=\langle E_n\rangle\in \cE$ 
	we define $F^E_n = E_n\cup\{e_n\}$, where 
	$e_1,e_2,\dots$ is an enumeration of 
	$B=\omega\setminus \bigcup_{n\in \omega} E_n$ (if $B$ is finite, we put $F^E_n = E_n$ for $n>|B|$).
	Let $\cF=\{\langle F^E_n\rangle:E\in \cE\}$, and notice that $\cF\subseteq\cP_\K$ and $|\cF|\leq|\cE|$.
	Let $\langle A_n\rangle\in \cP_\J$.
	There is $E=\langle E_n\rangle\in \cE$ with 
	$\bigcup_{n\in\omega}(A_n\cap \bigcup_{i\leq n}E_i)\notin\I$.
	Then 
	$\bigcup_{n\in\omega}(A_n\cap \bigcup_{i\leq n}E_i)
	\subseteq 
	\bigcup_{n\in\omega}(A_n\cap \bigcup_{i\leq n}F^E_i)$, so $\bigcup_{n\in\omega}(A_n\cap \bigcup_{i\leq n}F^E_i)\notin\I$.
	Thus $\bb_1(\I,\J,\K)\leq \bb(\I,\J,\K)$.
	
	%%%%%%%%%%%%%%
	
	\smallskip
	
	Now we show $\bb(\I,\J,\K)\leq \bb_2(\I,\J,\K)$.
	Let $\cG\subset\cK^\omega$ such that for every $\langle A_n\rangle \in \cP_\J$ there is $\langle G_n\rangle\in \cG$ with $\bigcup_{n\in\omega}(A_n\cap G_n)\notin\I$.
	For every $G=\langle G_n\rangle\in \cG$ we define $E^G_0=G_0$ and $E^G_{n}=G_n\setminus \bigcup_{i<n}G_i$ for $n\geq 1$.
	Let $\cE = \{\langle E^G_n\rangle: G\in \cG\}$, and notice that $\cE\subseteq \widehat{\cP_\K}$ and $|\cE|\leq |\cG|$.
	Let $\langle A_n\rangle\in \cP_\J$.
	Then there is $G=\langle G_n\rangle\in \cG$ with $\bigcup_{n\in\omega}(A_n\cap G_n)\notin\I$.
	But $\bigcup_{n\in\omega}(A_n\cap G_n)= \bigcup_{n\in\omega}(A_n\cap \bigcup_{i\leq n} E^G_i)$, so $\bnumber(\I,\J,\K)\leq |\cG|$ and the proof of this case is finished.

	%%%%%%%%%%%%%%
	
	\smallskip
	
	Now we show $\bb(\I,\J,\K)\geq \bb_2(\I,\J,\K)$.
	Let $\cE\subset\widehat{\cP_\cK}$ such that for every $\langle A_n\rangle \in \cP_\J$ there is $\langle E_n\rangle\in \cE$ with $\bigcup_{n\in\omega}(A_n\cap \bigcup_{i\leq n}E_i)\notin\I$.
	For every $E=\langle E_n\rangle\in \cE$ we define $G^E_{n}=\bigcup_{i\leq n}E_i$ for $n\in\omega$.
	Let $\cG= \{\langle G^E_n\rangle: E\in \cE\}$, and notice that $\cG\subseteq \K^\omega$ and $|\cG|\leq |\cE|$.
	Let $\langle A_n\rangle\in \cP_\J$.
	Then there is $E=\langle E_n\rangle\in \cE$ with $\bigcup_{n\in\omega}(A_n\cap \bigcup_{i\leq n} E_i)\notin\I$.
	But $\bigcup_{n\in\omega}(A_n\cap \bigcup_{i\leq n} E_i) = \bigcup_{n\in\omega}(A_n\cap G_n)$, so $\bnumber_2(\I,\J,\K)\leq |\cE|$ and the proof of this case is finished.
	
	%%%%%%%%%%%%%%
	
	\smallskip
	
	Finally, we show $\bb(\I,\J,\K)=\bb_3(\I,\J,\K)$. Actually, it suffices to observe (using the fact that $\langle A_n\rangle$ is a partition of $\omega$) that
	\begin{equation*}
		\begin{split}
			&x\in\bigcup_{n\in\omega}\left(A_n\cap \bigcup_{i\leq n}E_i\right) 
			\iff
			(\exists n\in\omega)(\exists k\leq n)(x\in A_n \land  x\in E_k)
			\iff\\
			&(\exists k\in\omega)(\exists n\geq k)(x\in E_k \land x\in A_n)
			\iff
			(\exists k\in\omega)(\forall n<k)(x\in E_k \land x\notin A_n)\\
			&\iff
			x\in\bigcup_{k\in\omega}\left(E_k\setminus \bigcup_{n<k}A_n\right).\qedhere
		\end{split}
	\end{equation*}
\end{proof}

%%%%%%%%%%%%%%%%%%%%%%%%%%%%%%%%%%%%%%%%%%%%%%%%%%
%%%%% SUBSECTION
%%%%%%%%%%%%%%%%%%%%%%%%%%%%%%%%%%%%%%%%%%%%%%%%%%

\subsection{Staniszewski and Canjar bounding numbers are the same}

\begin{theorem}
	\label{thm:Canjar-bNumber-characterization-by-Staniszewski-bNumber}
	Let $\I$,$\J$ and $\K$ be ideals on $\omega$.
	\begin{enumerate}

		\item $\bnumber(>_\I\cap (\cD_\K\times\cD_\J))=\bnumber(\I,\J,\K).$\label{thm:Canjar-bNumber-characterization-by-Staniszewski-bNumber-strict-inequalities}
		
		\item $\bnumber(\geq_\I\cap (\cD_\K\times\cD_\J)) = \bnumber_s(\I,\J,\K).$\label{thm:Canjar-bNumber-characterization-by-Staniszewski-bNumber-nonstrict-inequalities}
	\end{enumerate}
\end{theorem}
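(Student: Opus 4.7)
My plan is to identify each function $f\in\cD_\K$ with the partition $\langle E_n^f\rangle_{n\in\omega}\in\cP_\K$ given by $E_n^f=f^{-1}[\{n\}]$, and similarly to write $A_n^g=g^{-1}[\{n\}]$ for $g\in\cD_\J$. Under this correspondence I will verify the two elementary identities
\begin{equation*}
\{x\in\omega: f(x)\leq g(x)\} = \bigcup_{n\in\omega}\Bigl(A_n^g\cap\bigcup_{i\leq n}E_i^f\Bigr)
\end{equation*}
and
\begin{equation*}
\{x\in\omega: f(x)< g(x)\} = \bigcup_{n\in\omega}\Bigl(A_{n+1}^g\cap\bigcup_{i\leq n}E_i^f\Bigr),
\end{equation*}
so that the conditions $f\not>_\I g$ and $f\not\geq_\I g$ become precisely the clauses ``$\bigcup_n(A_n\cap\bigcup_{i\leq n}E_i)\notin\I$'' and ``$\bigcup_n(A_{n+1}\cap\bigcup_{i\leq n}E_i)\notin\I$'' appearing in the definitions of $\bnumber(\I,\J,\K)$ and $\bnumber_s(\I,\J,\K)$ respectively.

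To prove the ``$\leq$'' direction of (1), I would start with a family $\cE\subseteq\widehat{\cP}_\K$ of minimum cardinality witnessing $\bnumber(\I,\J,\K)$; by Proposition~\ref{prop:bNumber-characterization-partitions} I may assume $\cE\subseteq\cP_\K$, and then convert each $\langle E_n\rangle\in\cE$ into a function $f_E\in\cD_\K$ defined by $f_E(x)=n$ for $x\in E_n$. For any $g\in\cD_\J$, applying the witnessing property to $\langle A_n^g\rangle\in\cP_\J$ yields some $\langle E_n\rangle\in\cE$ with $\bigcup_n(A_n^g\cap\bigcup_{i\leq n}E_i)\notin\I$, which by the first identity is exactly $f_E\not>_\I g$. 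For the ``$\geq$'' direction, I would take an $>_\I$-unbounded family $\cF\subseteq\cD_\K$ and form $\cE=\{\langle E_n^f\rangle:f\in\cF\}\subseteq\cP_\K\subseteq\widehat{\cP}_\K$; given an arbitrary $\langle A_n\rangle\in\cP_\J$, I would define $g\in\cD_\J$ by $g(x)=n$ for $x\in A_n$ and pick $f\in\cF$ with $f\not>_\I g$, which via the identity produces the required combinatorial witness.

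The proof of (2) is mechanically identical, using the second identity in place of the first. The only subtlety is the index shift from $A_n$ to $A_{n+1}$, which is precisely why the strict order $>_\I$ corresponds to $\bnumber(\I,\J,\K)$ while the non-strict $\geq_\I$ corresponds to $\bnumber_s(\I,\J,\K)$; beyond this bookkeeping there is no substantial obstacle, the whole argument being a routine translation through the partition/function correspondence.
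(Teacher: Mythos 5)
Your proposal is correct and follows essentially the same route as the paper: both proofs translate between functions in $\cD_\K$, $\cD_\J$ and the partitions of their preimages, and both rest on exactly the computations you isolate as the two identities (the paper verifies them inline as inclusions, with the same index shift explaining the $\bnumber$ versus $\bnumber_s$ distinction). The only cosmetic difference is that you state the identities as equalities up front and invoke Proposition~\ref{prop:bNumber-characterization-partitions} explicitly to pass to full partitions, which the paper does implicitly.
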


\begin{proof}
	(\ref{thm:Canjar-bNumber-characterization-by-Staniszewski-bNumber-strict-inequalities})
	First we show $\bnumber(>_\I\cap (\cD_\K\times\cD_\J))\leq \bnumber(\I,\J,\K)$.
	Let $\cE\subseteq\widehat{\cP_\K}$ be such that for every $\langle A_n:n\in\omega\rangle\in\mathcal{P}_\J$ there is $\langle E_n\rangle \in \cE$ with
	$\bigcup_{n\in\omega}(A_n\cap \bigcup_{i\leq n}E_i)\notin\I$.
	
	For every $E=\langle E_n\rangle \in \cE$ let $f_E\in \omega^\omega$ be such that $f_E^{-1}[\{n\}] = E_n$ for every $n\in \omega$.
	Let $\cF = \{f_E:E\in \cE\}$. Then $\cF\subseteq\cD_\K$ and $|\cF|=|\cE|$. 
	
	Let $g\in \cD_\J$ and $A_n=g^{-1}[\{n\}]$ for every $n\in \omega$. 
	Since $\langle A_n:n\in \omega\rangle\in \cP_\J$, there is $E=\langle E_n\rangle \in \cE$ with 
	$B = \bigcup_{n\in\omega}(A_n\cap \bigcup_{i\leq n}E_i)\notin\I$.
	
	Once we show 
	$B \subseteq \{k\in \omega: f_E(k)\leq g(k)\}$, the proof will be finished in this case.
	Let $k\in B$.
	Then $k\in A_n\cap E_i$ for some $n\in \omega$ and $i\leq n$.
	Hence $g(k) = n \geq i = f_E(k)$.
	
	\smallskip
	
	Now we show $\bnumber(>_\I\cap (\cD_\K\times\cD_\J))\geq \bnumber(\I,\J,\K) $.
	Let $\cF\subseteq \cD_\K$ be such that there is no $g\in\cD_\J$ such that for all $f\in \cF$ we have $f>_\I g$. 
	
	For every $f\in \cF$ and $n\in \omega$ we define $E^f_n = f^{-1}[\{n\}]$, and we notice that $\cE=\{ \langle E^f_n\rangle: f\in \cF\}\subseteq \widehat{\cP_\cK}$ and $|\cE|=|\cF|$.
	
	Let $\langle A_n:n\in \omega\rangle\in \cP_\J$ and $g\in \omega^\omega$ be such that $g^{-1}[\{n\}] = A_n$. Since $g\in \cD_\J$, there is $f\in \cF$ with $\neg(f>_\I g)$.
	Thus, $B  = \{k\in\omega: f(k)\leq g(k)\}\notin\I$. 
	
	Once we show 
	$B \subseteq \bigcup_{n\in\omega}(A_n\cap \bigcup_{i\leq n}E^f_i)$, the proof will be finished in this case.
	
	Let $k\in B$.
	Since $k\in A_{g(k)}\cap E^f_{f(k)}$ and 
	$f(k)\leq g(k)$, 
	$k\in A_{g(k)}\cap \bigcup_{i\leq g(k)}E^f_i$.
	
	\smallskip
	
	(\ref{thm:Canjar-bNumber-characterization-by-Staniszewski-bNumber-nonstrict-inequalities})
	First we show 
	$\bnumber(\geq_\I\cap (\cD_\K\times\cD_\J)) \leq \bnumber_s(\I,\J,\K)$.
	Let $\cE\subseteq\widehat{\cP_\K}$ be such that for every $\langle A_n:n\in\omega\rangle\in\mathcal{P}_\J$ there is $\langle E_n\rangle \in \cE$ with
	$\bigcup_{n\in\omega}(A_{n+1}\cap \bigcup_{i\leq n}E_i)\notin\I$.
	
	For every $E=\langle E_n\rangle \in \cE$ let $f_E\in \omega^\omega$ be such that $f_E^{-1}[\{n\}] = E_n$ for every $n\in \omega$.
	Let $\cF = \{f_E:E\in \cE\}$. Then $\cF\subseteq\cD_\K$ and $|\cF|=|\cE|$. 
	
	Let $g\in \cD_\J$ and $A_n=g^{-1}[\{n\}]$ for every $n\in \omega$. 
	Since $\langle A_n:n\in \omega\rangle\in \cP_\J$, there is $E=\langle E_n\rangle \in \cE$ with 
	$B = \bigcup_{n\in\omega}(A_{n+1}\cap \bigcup_{i\leq n}E_i)\notin\I$.
	
	Once we show 
	$B \subseteq \{k\in \omega: f_E(k)<g(k)\}$, the proof will be finished in this case.
	Let $k\in B$.
	Then $k\in A_{n+1}\cap E_i$ for some $n\in \omega$ and $i\leq n$.
	Hence $g(k) = n+1\geq i+1>i = f_E(k)$.
	
	\smallskip
	
	Now we show 
	$\bnumber(\geq_\I\cap (\cD_\K\times\cD_\J)) \geq \bnumber_s(\I,\J,\K)$.
	Let $\cF\subseteq\cD_\K$ be such that for every $g\in \cD_\J$ there is $f\in \cF$ with $\neg(f\geq_I g)$.
	For every $f\in \cF$ and $n\in \omega$ we define $E^f_n=f^{-1}[\{n\}]$, and we notice that $\cE=\{\langle E^f_n\rangle:f\in \cF\}\subseteq\widehat{\cP_\K}$ and $|\cE|\leq |\cF|$.
	
	Take any $\langle A_k\rangle\in \cP_\J$ and 
	define $g\in \omega^\omega$ by $g(n)=k$ for every $n\in A_k$, $k\in \omega$.
	Since $g\in \cD_\J$, there is $f\in \cF$ with $B = \{n\in \omega:f(n)<g(n)\}\notin\I$.
	Once we show that $B\subseteq \bigcup_{k\in\omega}(A_{k+1}\cap \bigcup_{i\leq k}E^f_i)$, the proof will be finished in this case.
	
	Let $n\in \omega$ be such that $f(n)<g(n)$ and let $k\in \omega$ be such that $n\in A_k$.
	Notice that $k\geq 1$. (Indeed, if $k=0$ then $n\in A_0$, so $g(n)=0$, and consequently $f(n)<0$, a contradiction.) 
	Since $f(n)<g(n)=k$, $n\in \bigcup_{i<k}E^f_i$.
	Let $l=k-1$. Then $n\in A_{l+1}\cap \bigcup_{i\leq l}E^f_i$.
\end{proof}

\begin{corollary}
	\label{cor:Canjar-bNumber-properties}
	Let $\I$,$\J$ and $\K$ be ideals on $\omega$.
	\begin{enumerate}
		\item If $\J\cap \K\subseteq \I$, then 
		$\bnumber(>_\I\cap (\cD_\K\times\cD_\J))= \bnumber(\geq_\I\cap (\cD_\K\times\cD_\J))$.\label{cor:Canjar-bNumber-properties-equality}
		
		\item 
		$\bnumber(\geq_\fin\cap (\cD_\fin\times \cD_\fin)) = \bnumber(>_\fin\cap (\cD_\fin\times \cD_\fin)) = \bnumber$.\label{cor:Canjar-bNumber-properties-FIN}
	\end{enumerate}
\end{corollary}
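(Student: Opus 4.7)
The plan is to derive both parts of the corollary purely by quoting Theorem~\ref{thm:Canjar-bNumber-characterization-by-Staniszewski-bNumber} and Proposition~\ref{prop:bNumber-characterization}; no new combinatorics is needed, so the only ``obstacle'' is bookkeeping to make sure the hypotheses of the cited results match.

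For item (\ref{cor:Canjar-bNumber-properties-equality}), I would start by applying Theorem~\ref{thm:Canjar-bNumber-characterization-by-Staniszewski-bNumber} to translate both cardinals in question into the Staniszewski-style invariants: the part (\ref{thm:Canjar-bNumber-characterization-by-Staniszewski-bNumber-strict-inequalities}) gives $\bnumber(>_\I\cap (\cD_\K\times\cD_\J))=\bnumber(\I,\J,\K)$, and part (\ref{thm:Canjar-bNumber-characterization-by-Staniszewski-bNumber-nonstrict-inequalities}) gives $\bnumber(\geq_\I\cap (\cD_\K\times\cD_\J))=\bnumber_s(\I,\J,\K)$. The hypothesis $\J\cap\K\subseteq\I$ is exactly what is needed to invoke Proposition~\ref{prop:bNumber-characterization}(\ref{prop:bNumber-characterization-equality}), which then yields $\bnumber(\I,\J,\K)=\bnumber_s(\I,\J,\K)$. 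Chaining the three equalities completes the proof of (\ref{cor:Canjar-bNumber-properties-equality}).

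For item (\ref{cor:Canjar-bNumber-properties-FIN}), I would first specialize (\ref{cor:Canjar-bNumber-properties-equality}) to $\I=\J=\K=\fin$: since $\fin\cap\fin=\fin\subseteq\fin$ trivially, (\ref{cor:Canjar-bNumber-properties-equality}) immediately supplies $\bnumber(\geq_\fin\cap(\cD_\fin\times\cD_\fin))=\bnumber(>_\fin\cap(\cD_\fin\times\cD_\fin))$. Then I would identify this common value by Theorem~\ref{thm:Canjar-bNumber-characterization-by-Staniszewski-bNumber}(\ref{thm:Canjar-bNumber-characterization-by-Staniszewski-bNumber-strict-inequalities}), which gives $\bnumber(>_\fin\cap(\cD_\fin\times\cD_\fin))=\bnumber(\fin,\fin,\fin)$, and finally appeal to Proposition~\ref{prop:bNumber-characterization}(\ref{prop:bNumber-characterization-FIN}) to conclude $\bnumber(\fin,\fin,\fin)=\bnumber$. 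Since every step is a direct citation of a previously established equality, there is no real hard step; the main care is simply to verify that $\cD_\fin$ is nonempty (any finite-to-one $f$ works, so the cardinals are well defined) and that the containment $\J\cap\K\subseteq\I$ holds in the intended trivial form.
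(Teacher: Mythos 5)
Your proposal is correct and follows essentially the same route as the paper: both parts are obtained by citing Theorem~\ref{thm:Canjar-bNumber-characterization-by-Staniszewski-bNumber} together with Proposition~\ref{prop:bNumber-characterization}(\ref{prop:bNumber-characterization-equality}) and (\ref{prop:bNumber-characterization-FIN}), respectively. Your small detour in item (\ref{cor:Canjar-bNumber-properties-FIN}) of first invoking item (\ref{cor:Canjar-bNumber-properties-equality}) with $\I=\J=\K=\fin$ is harmless and equivalent to the paper's direct application of the cited results.
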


\begin{proof}	
	(\ref{cor:Canjar-bNumber-properties-equality})
	Follows from
	Theorem~\ref{thm:Canjar-bNumber-characterization-by-Staniszewski-bNumber} and Proposition~\ref{prop:bNumber-characterization}(\ref{prop:bNumber-characterization-equality}).
	
	(\ref{cor:Canjar-bNumber-properties-FIN})
	Follows from
	Theorem~\ref{thm:Canjar-bNumber-characterization-by-Staniszewski-bNumber} and Proposition
	\ref{prop:bNumber-characterization}(\ref{prop:bNumber-characterization-FIN}).
\end{proof}

\begin{corollary}
\label{cor:regular}
	Let $\I$,$\J$ and $\K$ be ideals on $\omega$. If $\J\subseteq\K$, then $\bnumber(\I,\J,\K)$ and $\bnumber_s(\I,\J,\K)$ are regular cardinals.
\end{corollary}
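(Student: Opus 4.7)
The plan is to obtain this corollary as an immediate consequence of two results already established in this section, without doing any fresh combinatorial work. The hypothesis $\J\subseteq\K$ is precisely the hypothesis appearing in Proposition~\ref{prop:cofinality-of-b-number}(\ref{prop:cofinality-of-b-number-regular}), and Theorem~\ref{thm:Canjar-bNumber-characterization-by-Staniszewski-bNumber} identifies the Staniszewski-style cardinals $\bnumber(\I,\J,\K)$ and $\bnumber_s(\I,\J,\K)$ with the Canjar-style cardinals $\bnumber(>_\I\cap(\cD_\K\times\cD_\J))$ and $\bnumber(\geq_\I\cap(\cD_\K\times\cD_\J))$, respectively. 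So the entire job is to make this translation explicit.

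First I would apply Theorem~\ref{thm:Canjar-bNumber-characterization-by-Staniszewski-bNumber}(\ref{thm:Canjar-bNumber-characterization-by-Staniszewski-bNumber-strict-inequalities}) to rewrite $\bnumber(\I,\J,\K)$ as $\bnumber(>_\I\cap(\cD_\K\times\cD_\J))$, and Theorem~\ref{thm:Canjar-bNumber-characterization-by-Staniszewski-bNumber}(\ref{thm:Canjar-bNumber-characterization-by-Staniszewski-bNumber-nonstrict-inequalities}) to rewrite $\bnumber_s(\I,\J,\K)$ as $\bnumber(\geq_\I\cap(\cD_\K\times\cD_\J))$. Then, since we are assuming $\J\subseteq\K$, Proposition~\ref{prop:cofinality-of-b-number}(\ref{prop:cofinality-of-b-number-regular}) applies directly and gives that both of these Canjar-style cardinals are regular, which is exactly what we want.

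There is essentially no obstacle: the hard work (both the characterization theorem and the cofinality/regularity argument using the inclusion $\cD_\J\subseteq\cD_\K$ together with item~(\ref{prop:cofinality-of-b-number-cofinality}) of Proposition~\ref{prop:cofinality-of-b-number}) has already been done. The proof amounts to citing these two statements in sequence, and the writeup can be kept to a single short paragraph.
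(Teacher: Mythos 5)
Your proposal is correct and is exactly the paper's own argument: the paper proves this corollary by citing Theorem~\ref{thm:Canjar-bNumber-characterization-by-Staniszewski-bNumber} together with Proposition~\ref{prop:cofinality-of-b-number}(\ref{prop:cofinality-of-b-number-regular}), which is precisely the two-step translation you describe.
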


\begin{proof}	
Follows from
	Theorem~\ref{thm:Canjar-bNumber-characterization-by-Staniszewski-bNumber} and Proposition \ref{prop:cofinality-of-b-number}(\ref{prop:cofinality-of-b-number-regular}).
\end{proof}

%%%%%%%%%%%%%%%%%%%%%%%%%%%%%%%%%%%%%%%%%%%%%%%%%%
%%%%% SUBSECTION
%%%%%%%%%%%%%%%%%%%%%%%%%%%%%%%%%%%%%%%%%%%%%%%%%%

\subsection{Extreme cases}

\begin{proposition}
	\label{prop:extreme-cases}
	Let $\I,\J,\K$ be ideals on $\omega$.
	\begin{enumerate}
		
		\item 
		$1\leq \bb(\I,\J,\K)\leq \bb_s(\I,\J,\K)\leq \continuum^+$.\label{prop:extreme-cases-geq-1}
		
		\item 
		If $\K\not\subseteq \I$, then $\bb(\I,\J,\K)=1$.\label{prop:extreme-cases-equals-1}
		
		\item 
		If $\J\cap \K\subseteq\I$ and $\K\not\subseteq \I$, then $\bnumber_s(\I,\J,\K)=1$.\label{prop:extreme-cases-equals-1-ALL}
		
		\item 
		If $\K\subseteq\J$, then $\bnumber_s(\I,\J,\K) \geq\aleph_0$.\label{prop:extreme-cases-geq-alef-zero}
		
		\item 
		If $\J\subseteq\K$ and $\I\not\perp \J$, then $\bnumber_s(\I,\J,\K)\leq \continuum$.\label{prop:extreme-cases-leq-continuum-bs}
		
		\item 
		If $\J\subseteq\K$, then $\bnumber(\I,\J,\K)\leq \continuum$.\label{prop:extreme-cases-leq-continuum}
		
		\item 
		If $\I\perp\J$, then 
		$\bb_s(\I,\J,\K)=\continuum^+$.
		\label{prop:extreme-cases-equals-continuum-plus-only-orthogonal}
		
		\item 
		If $\J\cap \K\subseteq\I$ and $\I\perp\J$, then $\bb(\I,\J,\K)=\continuum^+$.
		\label{prop:extreme-cases-equals-continuum-plus-ALL}
	\end{enumerate}
\end{proposition}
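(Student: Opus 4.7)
The plan is to dispatch the eight items via a mix of direct small-family constructions, the identification $\bb = \bb_s$ under $\J\cap\K\subseteq\I$ from Proposition~\ref{prop:bNumber-characterization}(\ref{prop:bNumber-characterization-equality}), and, in the orthogonal case, a single ``killer'' partition that refutes every putative witness. The items group naturally into reductions, upper bounds, and lower bounds, and the only real work is in item (\ref{prop:extreme-cases-geq-alef-zero}).

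Items (\ref{prop:extreme-cases-geq-1}), (\ref{prop:extreme-cases-equals-1-ALL}), and (\ref{prop:extreme-cases-equals-continuum-plus-ALL}) are bookkeeping. For (\ref{prop:extreme-cases-geq-1}), the trivial bound $|\cE|\geq 1$ together with Proposition~\ref{prop:bNumber-characterization}(\ref{prop:bNumber-characterization-inequality}) and the convention $\min\emptyset=\continuum^+$ gives everything. Items (\ref{prop:extreme-cases-equals-1-ALL}) and (\ref{prop:extreme-cases-equals-continuum-plus-ALL}) follow from (\ref{prop:extreme-cases-equals-1}) and (\ref{prop:extreme-cases-equals-continuum-plus-only-orthogonal}) respectively by Proposition~\ref{prop:bNumber-characterization}(\ref{prop:bNumber-characterization-equality}). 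For (\ref{prop:extreme-cases-equals-1}) I pick $E\in\K\setminus\I$ and exhibit the singleton witness $\{\langle E,\emptyset,\emptyset,\dots\rangle\}\subseteq\widehat{\cP}_\K$: for every $\langle A_n\rangle\in\cP_\J$ one has $\bigcup_n(A_n\cap E)= E\notin\I$.

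For the upper bounds (\ref{prop:extreme-cases-leq-continuum-bs}) and (\ref{prop:extreme-cases-leq-continuum}) I would take $\cE=\cP_\J$, which, since $\J\subseteq\K$, lies in $\widehat{\cP}_\K$ and has cardinality at most $\continuum$. For (\ref{prop:extreme-cases-leq-continuum}) the identity witness $\langle E_n\rangle = \langle A_n\rangle$ gives $\bigcup_n(A_n\cap\bigcup_{i\leq n}E_i)=\omega\notin\I$. For (\ref{prop:extreme-cases-leq-continuum-bs}) the one-step shift $E_0 = A_0\cup A_1$, $E_n = A_{n+1}$ for $n\geq 1$ is still in $\cP_\J$, and
\[
\bigcup_{n\in\omega}\left(A_{n+1}\cap\bigcup_{i\leq n}E_i\right)=\bigcup_{n\geq 1}A_n=\omega\setminus A_0,
\]
which lies outside $\I$ precisely by $\I\not\perp\J$ applied to $A_0\in\J$. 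For (\ref{prop:extreme-cases-equals-continuum-plus-only-orthogonal}), I use orthogonality in the opposite direction: fix $A\in\J$ with $\omega\setminus A\in\I$, set $A_0 = A$, and partition the $\I$-small complement into singletons (padded with $\emptyset$'s if finite). Then for every $\langle E_n\rangle\in\widehat{\cP}_\K$ the set $\bigcup_n(A_{n+1}\cap\bigcup_{i\leq n}E_i)$ is contained in $\omega\setminus A\in\I$, so no family $\cE$ meets the definition of $\bb_s$, and the minimum is $\continuum^+$.

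The only step needing real thought is (\ref{prop:extreme-cases-geq-alef-zero}). Given a finite $\cE=\{\langle E_n^k\rangle:k<m\}\subseteq\widehat{\cP}_\K$ and assuming $\K\subseteq\J$, I plan to defeat $\cE$ by disjointifying its layers: set $B_n=\bigcup_{k<m}E_n^k\in\K\subseteq\J$ and $B_n'=B_n\setminus\bigcup_{l<n}B_l$, then distribute the residue $R=\omega\setminus\bigcup_n B_n$ across the $A_n$'s. If $R$ is infinite, enumerate it as $\{x_0,x_1,\dots\}$ and take $A_n=B_n'\cup\{x_n\}$; if $R$ is finite, absorb $R$ into $A_0$ and take $A_n=B_n'$ for $n\geq 1$. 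In both cases $\langle A_n\rangle\in\cP_\J$, and one verifies $A_{n+1}\cap\bigcup_{i\leq n}E_i^k=\emptyset$ for every $k<m$ and $n\geq 0$, because $B_{n+1}'$ is disjoint from $B_0\cup\dots\cup B_n\supseteq\bigcup_{i\leq n}E_i^k$ and the residue element (when present) avoids every $B_m$. The main obstacle is precisely this case split on $|R|$ together with the verification that the resulting $\langle A_n\rangle$ is a genuine partition with every piece in $\J$.
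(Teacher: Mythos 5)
Your proof is correct, but it follows a genuinely more self-contained route than the paper on half of the items. For (\ref{prop:extreme-cases-geq-1}), (\ref{prop:extreme-cases-equals-1}), (\ref{prop:extreme-cases-equals-1-ALL}) and (\ref{prop:extreme-cases-equals-continuum-plus-ALL}) you do exactly what the paper does (the singleton witness $\langle E,\emptyset,\emptyset,\dots\rangle$ for (\ref{prop:extreme-cases-equals-1}), and reductions via Proposition~\ref{prop:bNumber-characterization}(\ref{prop:bNumber-characterization-equality}) for the two ``ALL'' items). The divergence is in (\ref{prop:extreme-cases-geq-alef-zero}), (\ref{prop:extreme-cases-leq-continuum-bs}) and (\ref{prop:extreme-cases-equals-continuum-plus-only-orthogonal}): the paper simply cites Theorem~4.9 of \cite{MR3624786}, whereas you prove them directly --- the disjointification-plus-residue partition defeating any finite $\cE\subseteq\widehat{\cP}_\K$ (using $\K\subseteq\J$ to keep the blocks $B_n=\bigcup_{k<m}E^k_n$ in $\J$) for (\ref{prop:extreme-cases-geq-alef-zero}), the shift trick inside $\cE=\cP_\J$ giving $\omega\setminus A_0\notin\I$ for (\ref{prop:extreme-cases-leq-continuum-bs}), and the single killer partition $A_0=A$, singletons on $\omega\setminus A\in\I$, which makes the defining set of $\bb_s$ empty, for (\ref{prop:extreme-cases-equals-continuum-plus-only-orthogonal}); all three verifications are sound (note that empty blocks in elements of $\cP_\J$ are allowed, as the paper itself uses them). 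For (\ref{prop:extreme-cases-leq-continuum}) the paper argues by a case split, reducing to (\ref{prop:extreme-cases-leq-continuum-bs}) when $\I\not\perp\J$ and to (\ref{prop:extreme-cases-equals-1}) when $\I\perp\J$ (since then $\K\not\subseteq\I$), while your uniform witness $\cE=\cP_\J$ with the identity sequence gives $\omega\notin\I$ outright and needs no cases. What the paper's route buys is brevity via the external reference; what yours buys is a self-contained argument whose explicit constructions also make transparent exactly where each hypothesis ($\K\subseteq\J$, $\I\not\perp\J$, $\I\perp\J$) enters.
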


\begin{proof}
	
	(\ref{prop:extreme-cases-geq-1})
	Obvious.

	(\ref{prop:extreme-cases-equals-1})
	By (\ref{prop:extreme-cases-geq-1}) it is enough to show that $\bb(\I,\J,\K)\leq 1$.
	Let $E\in \K\setminus \I$ and $\cE = \{\langle E,\emptyset,\emptyset,\dots\rangle\}$.
	Since $E\in \K$, $\cE\in\widehat{\cP_\K}$.
	Let $\langle A_n\rangle\in \cP_\J$.
	Then 
	$
	\bigcup_{n\in\omega}(A_n\cap \bigcup_{i\leq n}E_i) = 
	\bigcup_{n\in\omega}(A_n\cap E) = (\bigcup_{n\in\omega}A_n)\cap E = \omega\cap E=E\notin \I$.
	Consequently, $\bb(\I,\J,\K)\leq |\cE|=1$.

	(\ref{prop:extreme-cases-equals-1-ALL})
	Follows from (\ref{prop:extreme-cases-equals-1})
	and Proposition~\ref{prop:bNumber-characterization}(\ref{prop:bNumber-characterization-equality}).

	(\ref{prop:extreme-cases-geq-alef-zero})
	It follows from \cite[Theorem 4.9(5)]{MR3624786}.

	(\ref{prop:extreme-cases-leq-continuum-bs})
	It follows from \cite[Theorem 4.9(2)]{MR3624786}.

	(\ref{prop:extreme-cases-leq-continuum})
	If $\I\not\perp\J$, it follows from 
	(\ref{prop:extreme-cases-leq-continuum-bs}).
	If $\I\perp\J$, then $\cK\not\subseteq\cI$ in this case, so
	(\ref{prop:extreme-cases-equals-1}) finishes the proof.
	
	(\ref{prop:extreme-cases-equals-continuum-plus-only-orthogonal})
	It follows from \cite[Theorem 4.9(1)]{MR3624786}.

	(\ref{prop:extreme-cases-equals-continuum-plus-ALL})
	Follows from (\ref{prop:extreme-cases-equals-continuum-plus-only-orthogonal}) 
	and Proposition~\ref{prop:bNumber-characterization}(\ref{prop:bNumber-characterization-equality}).
\end{proof}

\begin{example}
	Let 
	$\I=\{A\subseteq\omega: A\cap \{2n:n\in\omega\}\in \fin\}$
	and
	$\J=\K=\{A\subseteq\omega: A\cap \{2n+1:n\in\omega\}\in \fin\}$.
	Then by Proposition~\ref{prop:extreme-cases} we have
	$\bnumber(\I,\J,\K) =1$
	and
	$\bnumber_s(\I,\J,\K)=\continuum^+ .$
\end{example}

%%%%%%%%%%%%%%%%%%%%%%%%%%%%%%%%%%%%%%%%%%%%%%%%%%
%%%%% SECTION
%%%%%%%%%%%%%%%%%%%%%%%%%%%%%%%%%%%%%%%%%%%%%%%%%%

\section{Basic relationships between the bounding numbers}
\label{sec:basic-relations-between-the-bounding-numbers}

In the rest of the paper we examine the cardinals $\bnumber(\I,\J,\K)$ and $\bnumber_s(\I,\J,\K)$ only in the case when $|\{\I,\J,\K\}\setminus\{\fin\}|\leq 1$ i.e.~a priori one could consider cardinals for  8 triples:
$(\fin,\fin,\fin)$,
$(\fin,\fin,\I)$,
$(\fin,\I,\fin)$,
$(\fin,\I,\I)$,
$(\I,\fin,\fin)$,
$(\I,\fin,\I)$,
$(\I,\I,\fin)$,
$(\I,\I,\I)$.
However, it will follow from Corollary~\ref{cor:bNumber-and-SbNumber} and Theorem~\ref{thm:bNumber-for-all-ideals} that in fact  we can restrict our study to four nontrivial cases: 
$\bb(\I,\I,\I)$, $\bb(\I,\I,\fin)$, $\bb(\I,\fin,\fin)$ 
and
$\bb_s(\fin,\I,\I)$.

In the second part of this section we present diagrams which summarize most results proved in the first part. 

%%%%%%%%%%%%%%%%%%%%%%%%%%%%%%%%%%%%%%%%%%%%%%%%%%
%%%%% SUBSECTION
%%%%%%%%%%%%%%%%%%%%%%%%%%%%%%%%%%%%%%%%%%%%%%%%%%

\subsection{Basic properties and relationships}

\begin{corollary}
\label{cor:bNumber-and-SbNumber}
$\bb(\I,\J,\K)=\bb_s(\I,\J,\K)$ 
for any triple
$(\I,\J,\K)
\in \{(\fin,\fin,\fin),$
$(\fin,\fin,\I),$
$(\fin,\I,\fin),$
$(\I,\fin,\fin),$
$(\I,\fin,\I),$
$(\I,\I,\fin),$
$(\I,\I,\I)\}$.
\end{corollary}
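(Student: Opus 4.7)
The plan is to invoke Proposition~\ref{prop:bNumber-characterization}(\ref{prop:bNumber-characterization-equality}), which gives $\bb(\I,\J,\K) = \bb_s(\I,\J,\K)$ whenever $\J \cap \K \subseteq \I$. Hence I only need to verify that each of the seven listed triples satisfies this inclusion.

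First I would check the triples in which at least one of $\J$ or $\K$ equals $\fin$: for $(\fin,\fin,\fin)$, $(\fin,\fin,\I)$, $(\fin,\I,\fin)$, $(\I,\fin,\fin)$, $(\I,\fin,\I)$, $(\I,\I,\fin)$, the intersection $\J\cap\K$ is always contained in $\fin$ (since one factor is $\fin$), and $\fin$ is contained in every ideal, in particular in $\I$ and in $\fin$. For $(\I,\I,\I)$ we simply have $\I\cap\I=\I\subseteq\I$. So $\J\cap\K\subseteq\I$ holds in all seven cases, and the corollary follows.

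The only potential subtlety worth flagging is the remark that the eighth triple $(\fin,\I,\I)$, which is the unique one missing from the list, does \emph{not} satisfy $\J\cap\K\subseteq\I$ in general (here the hypothesis becomes $\I\subseteq\fin$, forcing $\I=\fin$). This is consistent with the introductory comment that in the sequel one must still distinguish $\bb_s(\fin,\I,\I)$ from $\bb(\fin,\I,\I)$, which is exactly why the paper singles out $\bnumber_s(\fin,\I,\I)$ as one of the four nontrivial cases. There is no real obstacle to the proof; it is a direct bookkeeping application of the earlier proposition.
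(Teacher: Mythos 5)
Your proof is correct and is essentially the paper's own argument: the paper also derives the corollary directly from Proposition~\ref{prop:bNumber-characterization}(\ref{prop:bNumber-characterization-equality}), the condition $\J\cap\K\subseteq\I$ holding in all seven cases exactly as you verify. Your aside about the excluded triple $(\fin,\I,\I)$ is also accurate and matches why the paper treats $\bb_s(\fin,\I,\I)$ separately.
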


\begin{proof}
It follows from Proposition~\ref{prop:bNumber-characterization}(\ref{prop:bNumber-characterization-equality}).
\end{proof}

%%%%%%%%%%%%%%%%%%%%%%%%%%%%%%%%%%%%%%%%%%%%%%%%%%
%%%%% SUBSECTION
%%%%%%%%%%%%%%%%%%%%%%%%%%%%%%%%%%%%%%%%%%%%%%%%%%

\begin{theorem}
\label{thm:bNumber-for-all-ideals}
Let $\I$ be an ideal on $\omega$ such that $\I\neq\fin$.
\begin{enumerate}
\item 
$\bb(\fin,\fin,\I)=\bb(\fin,\I,\I)=1$.\label{thm:bNumber-for-all-ideals:FFI-FII-one}

\item 
$\bb(\fin,\fin,\fin)=\bb(\fin,\I,\fin)=\bnumber$.\label{thm:bNumber-for-all-ideals:FFF-FIF-b}

\item
\begin{enumerate}
\item 
If $\I$ is not a P-ideal, $\bb(\I,\fin,\I)=1$.\label{thm:bNumber-for-all-ideals:IFI-nonP}
\item
If $\I$ is a P-ideal, $\bb(\I,\fin,\I)=\bb(\I,\I,\I)$.\label{thm:bNumber-for-all-ideals:IFI-P}
\end{enumerate}

\item
\begin{enumerate}
	\item 
	If $\I$ is not a weak P-ideal, $\bb(\I,\I,\fin)=\continuum^+$.\label{thm:bNumber-for-all-ideals:IIF-nonWeakP}
	\item
		If $\I$ is a weak P-ideal, $\bb(\I,\I,\fin)\leq\dd$.\label{thm:bNumber-for-all-ideals:IIF-WeakP}
\end{enumerate}
\label{thm:bNumber-for-all-ideals:INEQUALITIES-IIF}

\item 
$\aleph_1\leq \bb(\I,\I,\I) \leq \continuum$.
\label{thm:bNumber-for-all-ideals:INEQUALITIES-III}
\item 
$\bb\leq \bb(\I,\fin,\fin) \leq \dd$.
\label{thm:bNumber-for-all-ideals:INEQUALITIES-IFF}
\item 
$\bb(\I,\fin,\fin)\leq \bb(\I,\I,\fin)$.\label{thm:bNumber-for-all-ideals:INEQUALITIES-1}
\item $\bb(\I,\I,\I)$ and $\bb(\I,\fin,\fin)$ are regular.
\label{thm:bNumber-for-all-ideals:regular}

\item 
$\bb(\I,\I,\I) \leq\cf(\bb(\I,\I,\fin))\leq \bb(\I,\I,\fin)$.\label{thm:bNumber-for-all-ideals:INEQUALITIES-2}
\end{enumerate}
\end{theorem}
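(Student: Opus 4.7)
I would prove the nine items in order, translating freely between the functional and partition formulations via Theorem~\ref{thm:Canjar-bNumber-characterization-by-Staniszewski-bNumber} and relying on Propositions~\ref{prop:cofinality-of-b-number}, \ref{prop:bNumber-monotonicity}, \ref{prop:extreme-cases} together with Corollaries~\ref{cor:regular} and~\ref{cor:bNumber-and-SbNumber} as the main tools. Items (1), (7), (8), (9) are essentially one-step consequences: (1) is Proposition~\ref{prop:extreme-cases}(\ref{prop:extreme-cases-equals-1}) because $\I\neq\fin$ forces $\K=\I\not\subseteq\fin$; (7) is monotonicity in the middle argument (Proposition~\ref{prop:bNumber-monotonicity}); (8) is Corollary~\ref{cor:regular} applied with $\J=\K=\I$ and $\J=\K=\fin$; and (9) combines Proposition~\ref{prop:cofinality-of-b-number}(\ref{prop:cofinality-of-b-number-cofinality}) with Theorem~\ref{thm:Canjar-bNumber-characterization-by-Staniszewski-bNumber} and Corollary~\ref{cor:bNumber-and-SbNumber}. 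Monotonicity also supplies the ``easy'' direction of (2), (3b) and (6).

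For the remaining direction $\bnumber(\fin,\I,\fin)\leq\bnumber$ of (2), I take a $\leq^*$-unbounded family of strictly increasing functions $\{f_\alpha:\alpha<\bnumber\}\subseteq\omega^\omega$ and put $E^\alpha_n=\{f_\alpha(n)\}$, so $\langle E^\alpha_n\rangle\in\widehat{\cP}_\fin$. Given $\langle A_n\rangle\in\cP_\I$, define $g\in\cD_\I$ by $g\restriction A_n\equiv n$; since $\omega\notin\I$ the range of $g$ is infinite, so $h(i):=\min\{k:g(k)\geq i\}$ is well-defined in $\omega^\omega$, and any $\alpha$ with $f_\alpha\not\leq^* h$ produces infinitely many $i$ with $g(f_\alpha(i))\geq i$, which is precisely $\bigcup_n(A_n\cap\bigcup_{i\leq n}E^\alpha_i)$ being infinite. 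The upper bound in (6), $\bnumber(\I,\fin,\fin)\leq\dnumber$, uses the $\bnumber_2$-form with a $\leq^*$-dominating family $\{h_\alpha:\alpha<\dnumber\}$ and $G^\alpha_n=[0,h_\alpha(n)]\in\fin$: for $\langle A_n\rangle\in\cP_\fin$ the numbers $M_n=\max A_n$ are finite, so any $h_\alpha\geq^*(M_n)$ yields $\bigcup_n(A_n\cap G^\alpha_n)=\omega\setminus(\text{finite})\notin\I$.

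Items (3) and (4) tie the bounding numbers to (weak) P-idealness. For (3a), pick a partition $\{B_k\}\in\cP_\I$ witnessing failure of the P-property (no pseudo-union in $\I$) and set $f\restriction B_k\equiv k$, so $f\in\cD_\I$. For any $g\in\cD_\fin$ the set $A=\{n:f(n)>g(n)\}$ satisfies $A\cap B_k\subseteq g^{-1}[\{0,\dots,k-1\}]\in\fin$, hence $\omega\setminus A=\{n:f(n)\leq g(n)\}$ is a pseudo-union of $\{B_k\}$ and cannot lie in $\I$, so $\{f\}$ witnesses $\bnumber(\I,\fin,\I)=1$. For (3b), P-idealness gives $C\in\I$ with $g^{-1}[\{k\}]\subseteq^* C$ for every $k$; then $g'$ defined by $g'(n)=n$ on $C$ and $g'=g$ off $C$ lies in $\cD_\fin$ and agrees with $g$ modulo $\I$, which transfers a witness for $\bnumber(\I,\fin,\I)$ to one for $\bnumber(\I,\I,\I)$. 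For (4a), the same construction with $\{B_k\}\in\cP_\I$ witnessing failure of the weak-P-property and $g\restriction B_k\equiv k$ makes $\{n:f(n)\leq g(n)\}\cap B_k\subseteq f^{-1}[\{0,\dots,k\}]\in\fin$ for every $f\in\cD_\fin$, so this set lies in $\I$ for every $f\in\cD_\fin$, forcing $\bnumber(\I,\I,\fin)=\continuum^+$. For (4b), weak-P-idealness provides $B\in\I^+$ with $B\cap A_n\in\fin$, and dominating $M_n:=\max(B\cap A_n)$ by some $h_\alpha$ gives $B\setminus(\text{finite})\subseteq\bigcup_n(A_n\cap[0,h_\alpha(n)])\notin\I$.

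The main obstacle I expect is the lower bound $\aleph_1\leq\bnumber(\I,\I,\I)$ in (5): given a countable family $\{f_n:n\in\omega\}\subseteq\cD_\I$, one has to exhibit a common $>_\I$-lower bound $g\in\cD_\I$. I would set $C_m:=\bigcap_{j\leq m}\{k:f_j(k)>m\}$, which lies in $\I^*$ because each $f_j^{-1}[\{0,\dots,m\}]$ is a finite union of elements of $\I$, and define $g(k):=\sup\{m:k\in C_m\}$. The supremum is finite for each $k$ (since e.g.\ $g(k)<f_0(k)$), and $g\in\cD_\I$ because $g^{-1}[\{m\}]\subseteq\omega\setminus C_{m+1}\in\I$; moreover $\{k:g(k)\geq f_n(k)\}\subseteq f_n^{-1}[\{0,\dots,n-1\}]\in\I$, which gives $f_n>_\I g$ for every $n$. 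The matching upper bound $\bnumber(\I,\I,\I)\leq\continuum$ is Proposition~\ref{prop:extreme-cases}(\ref{prop:extreme-cases-leq-continuum}).
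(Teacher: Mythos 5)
Your items (1) and (3)--(9) are essentially fine: where the paper simply cites the literature (items (3), (4), (5), and the upper bound in (6)) you give self-contained arguments that do work, modulo routine details such as turning a witness of non-(weak-)P-ness into a partition in $\cP_\I$ and the harmless $n=0$ case in your proof of $\aleph_1\leq\bb(\I,\I,\I)$. The genuine gap is in the upper bound $\bb(\fin,\I,\fin)\leq\bb$ of item (2). With the singleton blocks $E^\alpha_n=\{f_\alpha(n)\}$ the set $\bigcup_{n}(A_n\cap\bigcup_{i\leq n}E^\alpha_i)$ is indeed $\{f_\alpha(i):g(f_\alpha(i))\geq i\}$, but the key step fails: from $f_\alpha(i)>h(i)$, where $h(i)=\min\{k:g(k)\geq i\}$, you cannot conclude $g(f_\alpha(i))\geq i$, because $h(i)$ only locates the \emph{first} $k$ with $g(k)\geq i$, and $g$ may be small again at the later point $f_\alpha(i)$. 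Worse, no cleverer choice of $h$ can rescue this construction: since $\I\neq\fin$ there is an infinite $B\in\I$ (and $\omega\setminus B$ is infinite, $\I$ being proper); composing the members of any $\leq^*$-unbounded family with the increasing enumeration of $B$ yields again a $\leq^*$-unbounded family of strictly increasing functions, now with all values in $B$, and for the partition $A_0=B\cup\{c_0\}$, $A_k=\{c_k\}$ for $k\geq 1$, where $\omega\setminus B=\{c_0<c_1<\cdots\}$, every value $f_\alpha(i)$ lies in $A_0$, so $\bigcup_{n}(A_n\cap\bigcup_{i\leq n}E^\alpha_i)\subseteq\{f_\alpha(0)\}$ is finite for \emph{every} $\alpha$. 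So singletons taken from an arbitrary unbounded family do not witness $\bb(\fin,\I,\fin)\leq\bb$ (the inequality itself is true; the paper imports it from the literature).

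The repair is exactly the device you already use correctly in (4b) and (6): replace singletons by initial segments. In the $\bb_2$-form of Proposition~\ref{prop:bNumber-characterization-partitions} take $G^\alpha_n=[0,f_\alpha(n)]\cap\omega$ for a $\leq^*$-unbounded family of strictly increasing functions, and given $\langle A_n\rangle\in\cP_\I$ put $h(n)=\min\bigl(\omega\setminus\bigcup_{k<n}A_k\bigr)$. If $f_\alpha(n)>h(n)$, then $h(n)$ belongs to some $A_k$ with $k\geq n$ and $h(n)\leq f_\alpha(n)\leq f_\alpha(k)$, so $h(n)\in A_k\cap G^\alpha_k$; since $h(n)\to\infty$, any $\alpha$ with $f_\alpha\not\leq^*h$ yields infinitely many elements of $\bigcup_{k}(A_k\cap G^\alpha_k)$, i.e.\ this set is not in $\fin$, which is what $\bb(\fin,\I,\fin)\leq\bb$ requires.
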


\begin{proof}
(\ref{thm:bNumber-for-all-ideals:FFI-FII-one}) 
It follows from Proposition~\ref{prop:extreme-cases}(\ref{prop:extreme-cases-equals-1}).

(\ref{thm:bNumber-for-all-ideals:FFF-FIF-b}) 
It follows from \cite[Proposition 4.5]{MR3269495}.

(\ref{thm:bNumber-for-all-ideals:IFI-nonP})
It follows from \cite[Proposition~4.1]{MR3269495}.

(\ref{thm:bNumber-for-all-ideals:IFI-P})
It follows from \cite[Corollary~4.4]{MR3269495}.

(\ref{thm:bNumber-for-all-ideals:IIF-nonWeakP})
It follows from \cite[Theorem~4.9(1)]{MR3624786}

(\ref{thm:bNumber-for-all-ideals:IIF-WeakP})
It follows from \cite[Corollary 2.10]{MR3784399}.

(\ref{thm:bNumber-for-all-ideals:INEQUALITIES-III})
It follows from  \cite[Proposition 4.1]{MR3269495}.

(\ref{thm:bNumber-for-all-ideals:INEQUALITIES-IFF})
By (\ref{thm:bNumber-for-all-ideals:FFF-FIF-b}) and 
Proposition~\ref{prop:bNumber-monotonicity} we have $\bb=\bb(\fin,\fin,\fin)\leq \bb(\I,\fin,\fin)$.
Now we show $\bb(\I,\fin,\fin)\leq \dd$.
Let $\cF\subset\omega^\omega$ be a dominating family in $(\omega^\omega,\leq^*)$.
Without loss of generality we can assume that every $f\in \cF$ is increasing.
We define $E^f_n = [f(n-1), f(n))\cap \omega$ for every $f\in \cF$, $n\in \omega$ (here we put $f(-1) = 0$).

Notice that $\langle E^f_n:n\in\omega\rangle\in\widehat{\cP}_\fin$
 for every $f\in \cF$.
Let 
$\langle A_n:n\in\omega\rangle\in\mathcal{P}_\fin$.
Let $g\in \omega^\omega$ be given by $g(n) = \max (\bigcup_{i\leq n} A_i)$.
Note that if $g(n)<f(n)$, then $A_n \subseteq \bigcup_{i\leq n} A_i \subseteq [0,f(n)) = \bigcup_{i\leq n}E^f_i$.

Since $\cF$ is dominating, there is $f\in \cF$ with $g<^* f$.
Let $N\in \omega$ be such that $g(n)<f(n)$ for all $n\geq N$.

Then 
$\bigcup_{n\in\omega}(A_n\cap \bigcup_{i\leq n}E^f_i)
\supseteq
\bigcup_{n\geq N}(A_n\cap \bigcup_{i\leq n}E^f_i)
=
\bigcup_{n\geq N}(\bigcup_{i\leq n}E^f_i)
=\omega\notin\I
$,
and
the proof is finished.

(\ref{thm:bNumber-for-all-ideals:INEQUALITIES-1})
It follows from Proposition~\ref{prop:bNumber-monotonicity}.

(\ref{thm:bNumber-for-all-ideals:regular})
It follows from 
Theorem~\ref{thm:Canjar-bNumber-characterization-by-Staniszewski-bNumber}(\ref{thm:Canjar-bNumber-characterization-by-Staniszewski-bNumber-strict-inequalities}, \ref{thm:Canjar-bNumber-characterization-by-Staniszewski-bNumber-nonstrict-inequalities}) and Proposition~\ref{prop:cofinality-of-b-number}(\ref{prop:cofinality-of-b-number-regular}).

(\ref{thm:bNumber-for-all-ideals:INEQUALITIES-2})
The second inequality is obvious and the first inequality follows from 
Theorem~\ref{thm:Canjar-bNumber-characterization-by-Staniszewski-bNumber}(\ref{thm:Canjar-bNumber-characterization-by-Staniszewski-bNumber-strict-inequalities}) and Proposition~\ref{prop:cofinality-of-b-number}(\ref{prop:cofinality-of-b-number-cofinality}).
\end{proof}

\begin{corollary}
$\bb(\I,\J,\K)$ and $\bb_s(\I,\J,\K)$ are regular cardinals for any triple
$(\I,\J,\K)
\in \{(\fin,\fin,\fin),$
$(\fin,\fin,\I),$
$(\fin,\I,\fin),$
$(\fin,\I,\I),$
$(\I,\fin,\fin),$
$(\I,\fin,\I),$
$(\I,\I,\I)\}$.
\end{corollary}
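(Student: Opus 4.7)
The plan is to split the seven listed triples according to whether the hypothesis $\J \subseteq \K$ of Corollary~\ref{cor:regular} is satisfied. First I would verify that six of them, namely $(\fin,\fin,\fin)$, $(\fin,\fin,\I)$, $(\fin,\I,\I)$, $(\I,\fin,\fin)$, $(\I,\fin,\I)$, and $(\I,\I,\I)$, all satisfy $\J \subseteq \K$; for these, Corollary~\ref{cor:regular} immediately delivers regularity of both $\bb(\I,\J,\K)$ and $\bb_s(\I,\J,\K)$.

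The remaining triple is $(\fin,\I,\fin)$, which is the only element of the list failing $\J \subseteq \K$ once $\I \neq \fin$. For it I would invoke Theorem~\ref{thm:bNumber-for-all-ideals}(\ref{thm:bNumber-for-all-ideals:FFF-FIF-b}) to evaluate $\bb(\fin,\I,\fin) = \bnumber$, which is a regular cardinal. Since $\J \cap \K = \I \cap \fin = \fin \subseteq \fin = \I$, Proposition~\ref{prop:bNumber-characterization}(\ref{prop:bNumber-characterization-equality}) then forces $\bb_s(\fin,\I,\fin) = \bb(\fin,\I,\fin) = \bnumber$, which is also regular. The degenerate case $\I = \fin$ collapses every triple to $(\fin,\fin,\fin)$, which is again handled by Corollary~\ref{cor:regular}.

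There is no substantive obstacle here: the claim is a catalog-style corollary combining the general regularity result from Corollary~\ref{cor:regular} with the explicit evaluation in Theorem~\ref{thm:bNumber-for-all-ideals}(\ref{thm:bNumber-for-all-ideals:FFF-FIF-b}). The only point worth flagging is that the triple $(\I,\I,\fin)$ is deliberately absent from the list; this is consistent with Theorem~\ref{thm:bNumber-for-all-ideals}(\ref{thm:bNumber-for-all-ideals:INEQUALITIES-IIF}), where $\bb(\I,\I,\fin)$ is only bounded above by $\dd$ (or equals $\continuum^+$ for non weak P-ideals) without any known regularity statement, which is precisely why the two-step case analysis above exhausts the statement.
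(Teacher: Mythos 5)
Your proposal is correct and follows essentially the same route as the paper: the paper also handles $(\fin,\I,\fin)$ separately by evaluating $\bb(\fin,\I,\fin)=\bb_s(\fin,\I,\fin)=\bb$ (it cites Corollary~\ref{cor:bNumber-and-SbNumber}, which is just Proposition~\ref{prop:bNumber-characterization}(\ref{prop:bNumber-characterization-equality}) packaged for these triples, together with Theorem~\ref{thm:bNumber-for-all-ideals}(\ref{thm:bNumber-for-all-ideals:FFF-FIF-b})), and disposes of the remaining six triples, where $\J\subseteq\K$, via Corollary~\ref{cor:regular}. Your extra remarks on the degenerate case $\I=\fin$ and the deliberate omission of $(\I,\I,\fin)$ are accurate but not needed.
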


\begin{proof}
By Corollary~\ref{cor:bNumber-and-SbNumber} and Theorem \ref{thm:bNumber-for-all-ideals}(\ref{thm:bNumber-for-all-ideals:FFF-FIF-b}), $\bb(\fin,\I,\fin)=\bb_s(\fin,\I,\fin)=\bb$, which is a regular cardinal. The remaining cases follow from Corollary \ref{cor:regular}.
\end{proof}

We were not able to show that $\bb(\I,\I,\fin)$ is regular. However, the next result shows that some values of $\bb(\I,\I,\fin)$ are prohibited (for instance, this is the case for $\aleph_\omega$, as $\cf(\aleph_\omega)=\omega$).

\begin{corollary}
 $\aleph_1\leq\bb(\I,\I,\I)\leq\cf(\bb(\I,\I,\fin))=\cf(\bb_s(\I,\I,\fin))$  for any ideal $\I$.
\end{corollary}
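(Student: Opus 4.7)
The statement is essentially a repackaging of three facts already proven in the excerpt, so the plan is just to identify the right references and chain them together. I would split the chain into three links and handle each one separately.

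First, the lower bound $\aleph_1 \leq \bb(\I,\I,\I)$ is exactly the first half of Theorem~\ref{thm:bNumber-for-all-ideals}(\ref{thm:bNumber-for-all-ideals:INEQUALITIES-III}), so nothing needs to be done there beyond citing it. Second, the middle inequality $\bb(\I,\I,\I) \leq \cf(\bb(\I,\I,\fin))$ is exactly Theorem~\ref{thm:bNumber-for-all-ideals}(\ref{thm:bNumber-for-all-ideals:INEQUALITIES-2}); again, just a citation suffices. Third, the final equality $\cf(\bb(\I,\I,\fin))=\cf(\bb_s(\I,\I,\fin))$ requires noting that the triple $(\I,\I,\fin)$ satisfies the hypothesis $\J\cap\K\subseteq\I$ of Proposition~\ref{prop:bNumber-characterization}(\ref{prop:bNumber-characterization-equality}) (since $\I\cap\fin=\fin\subseteq\I$), which is precisely what is recorded in Corollary~\ref{cor:bNumber-and-SbNumber} for that triple; thus $\bb(\I,\I,\fin)=\bb_s(\I,\I,\fin)$, and applying $\cf$ to both sides gives the needed equality.

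Combining the three links yields the stated chain. There is no real obstacle: the content has already been accumulated, and the corollary is just a convenient summary. The proof proposal therefore reduces to a few lines of the form "follows from Theorem~\ref{thm:bNumber-for-all-ideals}(\ref{thm:bNumber-for-all-ideals:INEQUALITIES-III}), Theorem~\ref{thm:bNumber-for-all-ideals}(\ref{thm:bNumber-for-all-ideals:INEQUALITIES-2}) and Corollary~\ref{cor:bNumber-and-SbNumber}". The only thing to be mildly careful about is checking that $(\I,\I,\fin)$ is indeed one of the seven triples listed in Corollary~\ref{cor:bNumber-and-SbNumber}, which it is.
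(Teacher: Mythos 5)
Your proposal is correct and takes essentially the same route as the paper: the paper derives the middle inequality from Proposition~\ref{prop:cofinality-of-b-number}(\ref{prop:cofinality-of-b-number-cofinality}) together with Theorem~\ref{thm:Canjar-bNumber-characterization-by-Staniszewski-bNumber}, which is precisely the content of Theorem~\ref{thm:bNumber-for-all-ideals}(\ref{thm:bNumber-for-all-ideals:INEQUALITIES-2}) that you cite, and it uses Theorem~\ref{thm:bNumber-for-all-ideals}(\ref{thm:bNumber-for-all-ideals:INEQUALITIES-III}) and Corollary~\ref{cor:bNumber-and-SbNumber} exactly as you do. The only pedantic caveat (shared with the paper's own proof) is that Theorem~\ref{thm:bNumber-for-all-ideals} is stated for $\I\neq\fin$, but for $\I=\fin$ the whole chain is immediate since $\bb(\fin,\fin,\fin)=\bb_s(\fin,\fin,\fin)=\bb$ is regular.
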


\begin{proof}
By Proposition~\ref{prop:cofinality-of-b-number}(\ref{prop:cofinality-of-b-number-cofinality}) and Theorem~\ref{thm:Canjar-bNumber-characterization-by-Staniszewski-bNumber} we have $\bb(\I,\I,\I)\leq\cf(\bb(\I,\I,\fin))$, $\cf(\bb(\I,\I,\fin))=\cf(\bb_s(\I,\I,\fin))$ follows from Corollary~\ref{cor:bNumber-and-SbNumber} and $\aleph_1\leq\bb(\I,\I,\I)$ follows from Theorem \ref{thm:bNumber-for-all-ideals}(\ref{thm:bNumber-for-all-ideals:INEQUALITIES-III}).
\end{proof}

As shown in the next result, for P-ideals the situation is much simpler -- many considered cardinals coincide.

\begin{theorem}
\label{thm:bNumber-for-P-ideals}
If $\I$ is a P-ideal, then 
$$\bb\leq \bb(\I,\fin,\fin)=\bb(\I,\I,\I)=\bb(\I,\fin,\I)=\bb(\I,\I,\fin)\leq\dd.$$
\end{theorem}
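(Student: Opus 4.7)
The plan is to assemble the four equalities from already-established monotonicity and cited facts, leaving only a single new inequality to prove. Using $\fin\subseteq\I$, Proposition~\ref{prop:bNumber-monotonicity} yields the chains
\[
\bb(\I,\fin,\I)\leq\bb(\I,\fin,\fin)\leq\bb(\I,\I,\fin)
\quad\text{and}\quad
\bb(\I,\fin,\I)\leq\bb(\I,\I,\I)\leq\bb(\I,\I,\fin),
\]
so $\bb(\I,\fin,\I)$ is the smallest and $\bb(\I,\I,\fin)$ the largest of the four numbers. Theorem~\ref{thm:bNumber-for-all-ideals}(\ref{thm:bNumber-for-all-ideals:IFI-P}) gives $\bb(\I,\fin,\I)=\bb(\I,\I,\I)$ for a P-ideal $\I$, and Theorem~\ref{thm:bNumber-for-all-ideals}(\ref{thm:bNumber-for-all-ideals:INEQUALITIES-IFF}) gives the outer bounds $\bb\leq\bb(\I,\fin,\fin)\leq\dd$. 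Therefore, the entire statement will follow once I show the single reverse inequality $\bb(\I,\I,\fin)\leq\bb(\I,\I,\I)$ for P-ideals.

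For this missing inequality I would switch to the $\bb_2$ formulation from Proposition~\ref{prop:bNumber-characterization-partitions}. Let $\cG\subseteq\I^\omega$ be a witness of size $\bb_2(\I,\I,\I)$, i.e., for every $\langle A_n\rangle\in\cP_\I$ some $\langle G_n\rangle\in\cG$ satisfies $\bigcup_n(A_n\cap G_n)\notin\I$. For each $G=\langle G_n\rangle\in\cG$ apply the P-ideal property to the countable family $\{G_n:n\in\omega\}\subseteq\I$ to produce $B_G\in\I^*$ with $G_n\cap B_G$ finite for every $n$. Set $G'_n:=G_n\cap B_G$, so $\langle G'_n\rangle\in\fin^\omega$, and let $\cG':=\{\langle G'_n\rangle:G\in\cG\}\subseteq\fin^\omega$. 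Given any $\langle A_n\rangle\in\cP_\I$, pick the corresponding $G\in\cG$; since $\omega\setminus B_G\in\I$, the set
\[
\bigcup_n(A_n\cap G'_n)=\Bigl(\bigcup_n(A_n\cap G_n)\Bigr)\cap B_G
\]
differs from $\bigcup_n(A_n\cap G_n)\notin\I$ by a member of $\I$, hence is also outside $\I$. Thus $\cG'$ witnesses $\bb_2(\I,\I,\fin)\leq|\cG|$, giving $\bb(\I,\I,\fin)\leq\bb(\I,\I,\I)$.

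I do not expect a serious obstacle: the only non-routine point is choosing the right formulation to carry out the P-ideal thinning. The $\bb_2$-characterization is what makes this clean, because it frees us from having to preserve any partition structure of the family after intersecting with $B_G$. Had I tried to argue directly via $\widehat{\cP}_\K$, I would have had to rebuild pairwise disjointness or covering of $\omega$ by hand; Proposition~\ref{prop:bNumber-characterization-partitions} removes that nuisance entirely, and the rest is just stitching together the inequalities listed above.
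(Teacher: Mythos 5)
Your proposal is correct, and its one genuinely new step is proved by the same device as in the paper: the inequality $\bb(\I,\I,\fin)\leq\bb(\I,\I,\I)$ is obtained by a P-ideal thinning of the witness family, where for each sequence $\langle G_n\rangle$ you choose $B_G\in\I^*$ with all $G_n\cap B_G$ finite and observe that deleting the $\I$-set $\omega\setminus B_G$ cannot push the attacked union $\bigcup_n(A_n\cap G_n)$ into $\I$; the paper does exactly this with $B_E\in\I$ and $E_n\setminus B_E$ in place of your $B_G\in\I^*$ and $G_n\cap B_G$, so the $\bb_2$-formulation versus $\widehat{\cP}_\I$ is only a cosmetic convenience (disjointness of the thinned sets is preserved automatically in the paper's version). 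Where you differ is the overall decomposition: the paper proves \emph{two} inequalities, $\bb(\I,\I,\fin)\leq\bb(\I,\I,\I)$ and, separately, $\bb(\I,\I,\fin)\leq\bb(\I,\fin,\fin)$, the latter by a second P-ideal argument that converts a partition $\langle B_n\rangle\in\cP_\I$ into one in $\cP_\fin$ (replacing $B_n$ by $(B_n\setminus B)\cup\{b_n\}$ for a pseudo-union $B=\{b_n:n\in\omega\}$) and checks that the attacking set survives modulo $B$. You prove only the first and recover the second from the chain $\bb(\I,\I,\fin)\leq\bb(\I,\I,\I)=\bb(\I,\fin,\I)\leq\bb(\I,\fin,\fin)$, using the cited Theorem~\ref{thm:bNumber-for-all-ideals}(\ref{thm:bNumber-for-all-ideals:IFI-P}) together with Proposition~\ref{prop:bNumber-monotonicity}. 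This makes your proof shorter (one lemma instead of two), at the cost of leaning harder on the external equality $\bb(\I,\fin,\I)=\bb(\I,\I,\I)$; since that result must be invoked anyway to place $\bb(\I,\fin,\I)$ in the displayed chain, the shortcut loses nothing, while the paper's route has the mild advantage of giving $\bb(\I,\I,\fin)\leq\bb(\I,\fin,\fin)$ by a direct, self-contained argument.
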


\begin{proof}
Taking into account the previous results and the fact that every P-ideal is a weak P-ideal, it is enough to show
$\bb(\I,\fin,\fin) \geq \bb(\I,\I,\fin)$
and
$\bb(\I,\I,\I) \geq  \bb(\I,\I,\fin)$.

First we show $\bb(\I,\fin,\fin) \geq \bb(\I,\I,\fin)$.

Let $\cE\in \cP_\fin$ be such that for every $\langle A_n\rangle\in \cP_\fin$ there is $\langle E_n\rangle\in \cE$ with 
$\bigcup_{n\in\omega}(A_n\cap \bigcup_{i\leq n}E_i)\notin\I$.

Let $\langle B_n\rangle\in \cP_\I$.
Since $\I$ is a P-ideal, there is $B\in \I$ with $B_n\setminus B\in \fin$ for every $n\in \omega$.
Let $B=\{b_n:n\in \omega\}$ and set $A_n = (B_n\setminus B)\cup\{b_n\}$ for every $n\in \omega$.
Then $\langle A_n\rangle \in \cP_\fin$, so there is 
$\langle E_n\rangle\in \cE$ with 
$C = \bigcup_{n\in\omega}(A_n\cap \bigcup_{i\leq n}E_i)\notin\I$.

Once we show $C\setminus B \subseteq \bigcup_{n\in\omega}(B_n\cap \bigcup_{i\leq n}E_i)$, the proof of this case will be finished.

Let $k\in C\setminus B$. Then there is $n\in \omega$ with $k\in [A_n\cap \bigcup_{i\leq n}E_i]\setminus B 
= 
[((B_n\setminus B)\cup\{b_n\})\cap \bigcup_{i\leq n}E_i]\setminus B 
=
(B_n\setminus B)\cap \bigcup_{i\leq n}E_i
\subseteq
B_n\cap \bigcup_{i\leq n}E_i
$.

%%%%%%%%%%%%%%%%%%%%%%

\medskip

Now we show $\bb(\I,\I,\I) \geq  \bb(\I,\I,\fin)$.

Let $\cE\in \cP_\I$ be such that for every $\langle A_n\rangle\in \cP_\I$ there is $\langle E_n\rangle\in \cE$ with 
$\bigcup_{n\in\omega}(A_n\cap \bigcup_{i\leq n}E_i)\notin\I$.

Since $\I$ is a P-ideal, for every $E=\langle E_n\rangle\in \cE$ there is $B_E\in \I$ such that $D_n^E = E_n\setminus B_E\in\fin$ for every $n\in \omega$.
Note that $D_n^E\cap D_k^E=\emptyset$ for every $E\in \cE$ and $n\neq k$.

Let $\langle A_n\rangle\in \cP_\I$.
Then there is $E=\langle E_n\rangle\in \cE$ with 
$C = \bigcup_{n\in\omega}(A_n\cap \bigcup_{i\leq n}E_i)\notin\I$.

Once we show $C\setminus B_E \subseteq \bigcup_{n\in\omega}(A_n\cap \bigcup_{i\leq n}D^E_i)$, the proof of this case will be finished.

Let $k\in C\setminus B_E$. 
Then there is $n\in\omega$ with 
$k
\in 
[\bigcup_{n\in\omega}(A_n\cap \bigcup_{i\leq n}E_i)]\setminus B_E
=
\bigcup_{n\in\omega}(A_n\cap \bigcup_{i\leq n}(E_i\setminus B_E))
=
\bigcup_{n\in\omega}(A_n\cap \bigcup_{i\leq n}D_i^E)
$.
\end{proof}

The rest of this subsection is devoted to $\bb_s(\fin,\I,\I)$. 

\begin{lemma}
	\label{lem:SbNumber-for-all-ideals}
	Let $\I$ be an ideal on $\omega$.
		\begin{enumerate}
			\item 
			If $\I$ is not a P-ideal, then $\bb_s(\fin,\I,\I)=\aleph_0$.\label{lem:SbNumber-for-all-ideals:FII-nonPideal}
			\item If $\I$ is a P-ideal, then 
			$\aleph_1\leq \bb_s(\fin,\I,\I)\leq\bb$.\label{lem:SbNumber-for-all-ideals:FII-Pideal}
		\end{enumerate}
\end{lemma}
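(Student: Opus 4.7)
The lower bound $\bb_s(\fin,\I,\I) \geq \aleph_0$ holds in both parts by Proposition~\ref{prop:extreme-cases}(\ref{prop:extreme-cases-geq-alef-zero}). By Theorem~\ref{thm:Canjar-bNumber-characterization-by-Staniszewski-bNumber}(\ref{thm:Canjar-bNumber-characterization-by-Staniszewski-bNumber-nonstrict-inequalities}) I will freely switch between the partition form and the Canjar form, in which $\bb_s(\fin,\I,\I)$ equals the smallest size of a family $\cF\subseteq\cD_\I$ having no common $\leq^*$-lower bound inside $\cD_\I$. Throughout I fix a partition $\{B_k:k\in\omega\}\in\cP_\I$ together with the ``coordinate'' function $\chi\in\cD_\I$ defined by $\chi(x)=k$ iff $x\in B_k$; for part~(1) this partition should in addition witness failure of the P-property, meaning that no $C\in\I$ satisfies $B_k\subseteq^* C$ for every~$k$ (such a partition exists by a standard increasing-disjointification argument).

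For part~(1), for each $k\in\omega$ define $f_k\in\cD_\I$ by setting $f_k(x)=0$ on $B_k$ and $f_k(x)=\chi(x)$ elsewhere. The preimage identities $f_k^{-1}(0)=B_0\cup B_k$, $f_k^{-1}(j)=B_j$ for $j\in\omega\setminus\{0,k\}$, and $f_k^{-1}(k)=\emptyset$ confirm $f_k\in\cD_\I$. If some $g\in\cD_\I$ satisfied $g\leq^* f_k$ for every~$k$, then $B_k\cap g^{-1}(\omega\setminus\{0\})$ would be finite for each~$k$, so $g^{-1}(0)\in\I$ would pseudo-contain every~$B_k$, contradicting the choice of the partition.

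For the lower bound $\aleph_1\leq \bb_s(\fin,\I,\I)$ in part~(2) I use the partition form. Given a countable family $\cE=\{\langle E_n^k\rangle:k\in\omega\}\subseteq\widehat{\cP}_\I$, the P-property applied to the countable set $\{E_n^k:k,n\in\omega\}\subseteq \I$ yields $D\in\I$ with $E_n^k\subseteq^* D$ for all $k,n$. The finite sets $F_{k,n}=\bigcup_{i\leq n}(E_i^k\setminus D)$ give monotone sequences $\phi_k(n):=\max F_{k,n}$ in $\omega^\omega$, and since $\{\phi_k:k\in\omega\}$ is countable it admits a common $\leq^*$-upper bound $\psi\in\omega^\omega$, which I may take to be strictly increasing. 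Then $A_0:=D\cup[0,\psi(0)]$ and $A_m:=(\psi(m-1),\psi(m)]\setminus D$ for $m\geq 1$ yields $\langle A_n\rangle\in\cP_\I$, each $A_{m+1}$ is disjoint from $D$, and for $m$ with $\phi_k(m)\leq\psi(m)$ one has $A_{m+1}\cap[0,\phi_k(m)]=\emptyset$, so only finitely many $m$ contribute to $\bigcup_m A_{m+1}\cap\bigcup_{i\leq m}E_i^k$, making it finite.

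For the upper bound $\bb_s(\fin,\I,\I)\leq\bb$ in part~(2) I work in the Canjar form. Let $\cH\subseteq\omega^\omega$ be a $\leq^*$-unbounded family of strictly increasing functions with $|\cH|=\bb$, and for $h\in\cH$ set $f_h(x):=h^{-1}(\chi(x))$, where $h^{-1}(k):=\min\{n:h(n)\geq k\}$. Then $f_h$ is constant on each $B_k$ and $f_h\in\cD_\I$. Suppose $g\in\cD_\I$ were a $\leq^*$-lower bound for $\{f_h:h\in\cH\}$; the constancy of $f_h$ on $B_k$ together with finiteness of $\{x:g(x)>f_h(x)\}$ forces $g$ to be bounded on each $B_k$ (else the single-$k$ slice of this set would already be infinite), so $\tilde g(k):=\max_{x\in B_k}g(x)$ is a function $\omega\to\omega$ with $\tilde g\leq^*h^{-1}$ for every $h\in\cH$, and $\tilde g$ is unbounded because $g\in\cD_\I$ is. Setting $G(l):=\min\{k:\tilde g(k)\geq l\}$ gives a well-defined function with $G(l)\to\infty$, and the inequality $l\leq \tilde g(G(l))\leq h^{-1}(G(l))$ rewrites as $G(l)>h(l-1)$ for almost all $l$, so $G$ becomes a $\leq^*$-upper bound for the (shifted) family $\cH$, contradicting its unboundedness. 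The main obstacle is exactly this last step: converting an $\leq^*$-lower bound in $\cD_\I$ into a $\leq^*$-upper bound in $\omega^\omega$ via the inverse construction $f_h=h^{-1}\circ\chi$, and being careful that $\tilde g$ takes only finite values so that the comparison $\tilde g\leq^*h^{-1}$ is meaningful.
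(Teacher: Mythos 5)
Your proposal is correct, but it takes a genuinely different route from the paper. The paper's proof is essentially by citation: the upper bound in (1) and the lower bound in (2) are quoted from Theorem~4.9(3),(4) of \cite{MR3624786}, and the bound $\bb_s(\fin,\I,\I)\leq\bb$ is obtained abstractly via monotonicity, $\bb_s(\fin,\I,\I)\leq\bb_s(\fin,\I,\fin)=\bb(\fin,\I,\fin)=\bb$, the last equality being Theorem~\ref{thm:bNumber-for-all-ideals}(\ref{thm:bNumber-for-all-ideals:FFF-FIF-b}). You instead give self-contained combinatorial constructions, switching via Theorem~\ref{thm:Canjar-bNumber-characterization-by-Staniszewski-bNumber}(\ref{thm:Canjar-bNumber-characterization-by-Staniszewski-bNumber-nonstrict-inequalities}) between the partition form and the ``no common $\leq^*$-lower bound in $\cD_\I$'' form, and all three steps check out: in (1) the functions $f_k$ vanishing on $B_k$ work because a common lower bound $g$ would force $B_k\subseteq^* g^{-1}(0)\in\I$ against the non-P witness (your disjointification needs the small extra step of absorbing $\omega\setminus\bigcup_n A_n$ into the blocks to land in $\cP_\I$, but that is routine); in (2) the lower bound via a pseudo-union $D$ and a dominating interval partition is in the same spirit as the paper's later proof of Theorem~\ref{thm:SbNumber-min-b-addStar}; and your upper bound, transporting an unbounded family through $h\mapsto h^{-1}\circ\chi$ and inverting a putative lower bound $g$ into a dominating function $l\mapsto G(l+1)$, is sound (the verification that $f_h\in\cD_\I$, that $\tilde g$ is finite-valued and unbounded, and that $h^{-1}(G(l))\geq l$ rewrites as $h(l-1)<G(l)$ are all correct). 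What your approach buys is independence from the cited external results and, incidentally, a direct proof that $\bb_s(\fin,\I,\I)\leq\bb$ for arbitrary ideals, since your upper-bound argument never uses the P-ideal hypothesis; what the paper's approach buys is brevity, reusing already established machinery. Only cosmetic fixes are needed: adopt a convention such as $\max\emptyset=0$ for $\phi_k$ and $\tilde g$ on empty blocks.
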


\begin{proof}
		(\ref{lem:SbNumber-for-all-ideals:FII-nonPideal})
	By Theorem~\ref{prop:extreme-cases}(\ref{prop:extreme-cases-geq-alef-zero}), $\bb_s(\fin,\I,\I)\geq \aleph_0$. The revers inequality
	follows from 
	\cite[Theorem 4.9(3)]{MR3624786}.
	
	(\ref{lem:SbNumber-for-all-ideals:FII-Pideal})
	The first inequality follows from \cite[Theorem 4.9(4)]{MR3624786}, whereas the second one follows from Theorem~\ref{thm:bNumber-for-all-ideals}(\ref{thm:bNumber-for-all-ideals:FFF-FIF-b})  and the observation  that $\bb_s(\fin,\I,\I)\leq\bb_s(\fin,\I,\fin)=\bb(\fin,\I,\fin)=\bb$.	
\end{proof}

\begin{lemma}
\label{lem:SbNumber-for-all-ideals:FII-leq-addStar}
$\bb_s(\fin,\I,\I)\leq \adds(\I)$ for any ideal $\I$.	
\end{lemma}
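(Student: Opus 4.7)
The plan is to construct a witness family $\cE$ of cardinality $\adds(\I)$ directly from a witness for $\adds(\I)$, using the simplest possible sequence shape. If $\adds(\I) = \continuum^+$ the inequality is immediate from Proposition~\ref{prop:extreme-cases}(\ref{prop:extreme-cases-geq-1}), so assume $\adds(\I) \leq \continuum$ and fix $\cF \subseteq \I$ with $|\cF| = \adds(\I)$ such that for every $A \in \I$ there exists $F \in \cF$ with $|F \setminus A| = \aleph_0$.

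For each $F \in \cF$ I would define the sequence $E^F = \langle F, \emptyset, \emptyset, \ldots\rangle$, which lies in $\widehat{\cP}_\I$ since $F \in \I$ (and trivially the terms are pairwise disjoint). The point of putting all the ``mass'' at index $0$ is that the cumulative unions collapse to a constant, $\bigcup_{i \leq n} E^F_i = F$ for every $n \in \omega$, so the condition appearing in the definition of $\bnumber_s(\fin,\I,\I)$ reduces to something depending only on $F$ and $A_0$. Set $\cE = \{E^F : F \in \cF\}$, so $|\cE| \leq \adds(\I)$.

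To verify that $\cE$ witnesses the inequality, take any partition $\langle A_n\rangle \in \cP_\I$. Since $A_0 \in \I$, the defining property of $\cF$ gives some $F \in \cF$ with $|F \setminus A_0| = \aleph_0$. Then, using that $\langle A_n\rangle$ partitions $\omega$,
$$\bigcup_{n\in\omega}\left(A_{n+1} \cap \bigcup_{i \leq n} E^F_i\right) \;=\; \bigcup_{n\in\omega}(A_{n+1} \cap F) \;=\; F \cap \bigcup_{n \geq 1} A_n \;=\; F \setminus A_0,$$
which is infinite, hence not in $\fin$. This yields $\bnumber_s(\fin,\I,\I) \leq |\cE| \leq \adds(\I)$.

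There is no real obstacle: the whole argument hinges on the observation that concentrating the sequence at index $0$ trivialises the role of $\langle A_n \rangle$ beyond its first piece, leaving exactly the combinatorics controlled by $\adds(\I)$. Notice that the fact $\J = \I$ (so that $A_0 \in \I$) is used essentially, which is why the same strategy does not give a bound for $\bnumber_s(\fin,\fin,\I)$.
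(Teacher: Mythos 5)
Your proposal is correct and follows essentially the same route as the paper's proof: concentrate the witness for $\adds(\I)$ in the first coordinate of a sequence $\langle F,\emptyset,\emptyset,\dots\rangle\in\widehat{\cP}_\I$ and use $A_0\in\I$ to find $F$ with $F\setminus A_0$ infinite, which is exactly the set $\bigcup_{n\in\omega}(A_{n+1}\cap\bigcup_{i\leq n}E_i)$. The only cosmetic difference is that you verify this as an equality where the paper only records the inclusion $F\setminus A_0\subseteq\bigcup_{n\in\omega}(A_{n+1}\cap\bigcup_{i\leq n}E_i)$, which suffices.
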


\begin{proof}
If $\adds(\I)=\continuum^+$, we are done. Assume that $\adds(\I)\leq \continuum$.
Then there is a family $\cF=\{F_\alpha:\alpha<\adds(\I)\}\subseteq\I$ such that for every $A\in \I$ there is $\alpha$ with $|F_\alpha \setminus A|=\aleph_0$.
For every $\alpha$, we define $E^\alpha_0=F_\alpha$ and $E^\alpha_n=\emptyset$ for $n\geq 1$.
Then $\langle E^\alpha_n:n\in \omega\rangle\in \widehat{\cP}_\I$.

Take any $\langle A_n\rangle\in\cP_\I$.
If we show that $\bigcup_{n\in \omega}(A_{n+1}\cap \bigcup_{i\leq n} E^\alpha_i)\notin\fin$ for some $\alpha$, the proof will be finished.

Since $A_0\in \I$, there is $\alpha$ with $|F_\alpha\setminus A_0|=\aleph_0$.
On the other hand, it is easy to see that 
$F_\alpha\setminus A_0 = E^\alpha_0\setminus A_0 \subseteq \bigcup_{n\in \omega}(A_{n+1}\cap \bigcup_{i\leq n} E^\alpha_i)$, so we are done.
\end{proof}

\begin{theorem}
	\label{thm:SbNumber-min-b-addStar}
$\bb_s(\fin,\I,\I)=\min\{\bnumber,\adds(\I)\}$
for any ideal $\I$.
\end{theorem}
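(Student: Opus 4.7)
The inequality $\bb_s(\fin,\I,\I) \leq \min\{\bb,\adds(\I)\}$ is essentially already in place: Lemma~\ref{lem:SbNumber-for-all-ideals:FII-leq-addStar} gives $\bb_s(\fin,\I,\I) \leq \adds(\I)$, and Lemma~\ref{lem:SbNumber-for-all-ideals} gives $\bb_s(\fin,\I,\I) \leq \bb$ (directly in the P-ideal case, or because $\bb_s(\fin,\I,\I) = \aleph_0 \leq \bb$ otherwise). For the reverse inequality, the non-P-ideal case is immediate from $\adds(\I) = \aleph_0 = \bb_s(\fin,\I,\I)$, so I assume that $\I$ is a P-ideal and fix $\kappa < \min\{\bb,\adds(\I)\}$ together with a family $\cE = \{\langle E^\alpha_n\rangle_{n\in\omega} : \alpha<\kappa\} \subseteq \widehat{\cP}_\I$. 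The target is a partition $\langle A_n\rangle\in\cP_\I$ satisfying $\bigcup_n(A_{n+1}\cap\bigcup_{i\leq n}E^\alpha_i) \in \fin$ for every $\alpha$.

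My plan is to translate this into a function-domination problem and solve it in two stages. First, since the family $\{E^\alpha_n:\alpha<\kappa,\,n\in\omega\}\subseteq\I$ has cardinality at most $\kappa+\aleph_0 < \adds(\I)$ (using $\adds(\I)\geq\aleph_1$ for P-ideals), the definition of $\adds(\I)$ yields a single $A^*\in\I$ with $E^\alpha_n\subseteq^* A^*$ for every $\alpha,n$; declare $A_0=A^*$. Letting $f_\alpha\colon F_\alpha\to\omega$ (on $F_\alpha=\bigcup_iE^\alpha_i$) be defined by $f_\alpha(m)=i\iff m\in E^\alpha_i$, each $E^\alpha_i\setminus A^*$ is finite, so $f_\alpha$ is finite-to-one on $F_\alpha\setminus A^*$ and hence
$$r_\alpha(n)=\min\{f_\alpha(m): m\in F_\alpha\setminus A^*,\, m\geq n\}$$
(extended by $r_\alpha(n)=n$ when the set is empty) is a nondecreasing $\omega$-valued function tending to infinity. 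Second, setting $h_\alpha(n)=\min\{m:(\forall k\geq m)\,r_\alpha(k)>n\}$ and taking a strictly increasing common $\leq^*$-upper bound $h$ of $\{h_\alpha:\alpha<\kappa\}$ (which exists by $\kappa<\bb$), the finite-to-one function $g\in\omega^\omega$ defined by $g(m)=n\iff h(n)\leq m<h(n+1)$ satisfies $g(m)+1\leq r_\alpha(m)$ for cofinitely many $m\in\omega$, for every $\alpha$. Setting $A_n=g^{-1}(n-1)\setminus A^*$ for $n\geq 1$ then gives $\langle A_n\rangle\in\cP_\I$ (finite sets are in $\I$). Membership of $m$ in $\bigcup_n(A_{n+1}\cap\bigcup_{i\leq n}E^\alpha_i)$ forces $m\in F_\alpha\setminus A^*$ and $f_\alpha(m)\leq g(m)$, which contradicts $g(m)<r_\alpha(m)\leq f_\alpha(m)$ outside a finite set and closes the argument.

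I expect the main obstacle to be the clean coordination of the two cardinal invariants. The $\adds(\I)$-step is strictly preliminary: it packs all the sets $E^\alpha_n$ into a common ``almost-domain'' $\omega\setminus A^*$ on which each $f_\alpha$ becomes finite-to-one, and only after this reduction can the standard $\bb$-type construction of a slowly growing $g$ beneath a family of functions tending to infinity be applied. One small but easy-to-miss technicality is the need for strict inequality $g(m)<f_\alpha(m)$ on $F_\alpha\setminus A^*$ (not merely $g(m)\leq f_\alpha(m)$), which is why the $\bb$-step is phrased with the $+1$ shift $g+1\leq^* r_\alpha$, encoded in the definition of $h_\alpha$ via $r_\alpha(k)>n$ rather than $r_\alpha(k)\geq n$.
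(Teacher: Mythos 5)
Your proof is correct, and its skeleton is the same as the paper's: both arguments first use $\kappa<\adds(\I)$ to find a single set $A\in\I$ (your $A^*$) almost containing every $E^\alpha_n$, declare it to be $A_0$, and then use $\kappa<\bb$ to deal with the finite remainders $E^\alpha_n\setminus A$. The difference lies in how the second stage is executed. The paper notes that the traces $\langle E^\alpha_n\cap B\rangle$, with $B=\omega\setminus A$, belong to $\widehat{\cP}_{\fin(B)}$ and simply invokes $\bb=\bb(\fin(B),\fin(B),\fin(B))$ (the characterization from Proposition~\ref{prop:bNumber-characterization}(\ref{prop:bNumber-characterization-FIN})) to obtain the required partition of $B$ at once, prepending $A$ as $A_0$. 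You instead unpack that step: you encode the data in the functions $f_\alpha$, $r_\alpha$, $h_\alpha$, take a $\leq^*$-bound $h$ of $\{h_\alpha:\alpha<\kappa\}$ using the classical definition of $\bb$, and read the partition off the intervals $[h(n),h(n+1))$; in effect you reprove the relevant direction of $\bb(\fin,\fin,\fin)=\bb$ inside the argument, which makes the proof self-contained at the cost of extra bookkeeping. Your insistence on the strict inequality $g(m)<r_\alpha(m)$ (the ``$+1$ shift'') is exactly where the index shift $A_{n+1}$ in the definition of $\bb_s$ gets absorbed, and it is handled correctly. Two cosmetic points: $r_\alpha$ need not be nondecreasing when $F_\alpha\setminus A^*$ is finite (only $r_\alpha\to\infty$ is actually used, so no harm), and $g$ is undefined on $[0,h(0))$ unless you arrange $h(0)=0$ or place those finitely many points into some $A_n$ by hand; both are trivially fixable and do not affect the conclusion.
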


\begin{proof}
	If $\I$ is not a P-ideal, then $\bb_s(\fin,\I,\I)=\aleph_0$ (by Lemma~\ref{lem:SbNumber-for-all-ideals}(\ref{lem:SbNumber-for-all-ideals:FII-nonPideal})) and $\adds(\I)=\aleph_0$. Since $\bb>\aleph_0$, we obtain $\bb_s(\fin,\I,\I)=\min\{\bnumber,\adds(\I)\}$.
	
Assume that $\I$ is a P-ideal.
The inequality ``$\leq$'' follows from Lemma~\ref{lem:SbNumber-for-all-ideals}(\ref{lem:SbNumber-for-all-ideals:FII-Pideal}) and Lemma~\ref{lem:SbNumber-for-all-ideals:FII-leq-addStar}. Below we show the inequality ``$\geq$''. 

Take $\kappa<\min\{\bnumber,\adds(\I)\}$. We will show that $\kappa<\bnumber_s(\fin,\I,\I)$.

Let $\langle E^\alpha_n:n\in\omega\rangle\in \widehat{\cP}_\I$ for $\alpha<\kappa$.
Since $\kappa\cdot \aleph_0 <\adds(\I)$, there is $A\in \I$ such that $|E^\alpha_n\setminus A|<\aleph_0$ for every $\alpha<\kappa$ and $n\in \omega$.

Let 
$B = \omega\setminus A$.
Since 
$\langle E^\alpha_n\cap B\rangle\in\widehat{\cP}_{\fin(B)}$
for every $\alpha<\kappa$
and
$\kappa<\bb=\bb(\fin(B),\fin(B),\fin(B))$, there is
$\langle B_n\rangle\in\cP_{\fin(B)}$
such that 
$$\bigcup_{n\in \omega}\left(B_{n}\cap \bigcup_{i\leq n} \left(E^\alpha_i\cap B\right)\right)\in \fin(B)$$
for every $\alpha<\kappa$.

Let $A_0=A$ and $A_n=B_{n-1}$ for $n\geq 1$.
Then $\langle A_n\rangle\in\cP_\I$
and
for every $\alpha<\kappa$ we have
$$
\bigcup_{n\in \omega}\left(A_{n+1}\cap \bigcup_{i\leq n} E^\alpha_i\right) 
= 
\bigcup_{n\in \omega}\left(B_{n}\cap \bigcup_{i\leq n} E^\alpha_i\right)
=
\bigcup_{n\in \omega}\left(B_{n}\cap \bigcup_{i\leq n} (E^\alpha_i\cap B)\right)\in \fin.$$
\end{proof}

%%%%%%%%%%%%%%%%%%%%%%%%%%%%%%%%%%%%%%%%%%%%%%%%%%
%%%%% SUBSECTION
%%%%%%%%%%%%%%%%%%%%%%%%%%%%%%%%%%%%%%%%%%%%%%%%%%

\subsection{Diagrams}

In this subsection we present some diagrams which show inequalities  proved above. In all diagrams, ``$A\to B$'' means ``$A\leq B$''. Moreover, to make diagrams fit into the page, we sometimes write ``$\mathrm{F}$'' instead of ``$\mathrm{Fin}$''.

\begin{figure}[H]
	\begin{tikzcd}
		\mbox{\ }
		&
		\bb(\finShort,\I,\I)=1 \to \bb_s(\finShort,\I,\I)  \arrow[r]\arrow[rd]
		&
		\bb(\finShort,\I,\finShort)=\bb   \arrow[rd]
		&
		\mbox{\ }
		\\
		\bb(\finShort,\finShort,\I)=1 \arrow[r] \arrow[ru]\arrow[rd]
		&
		\bb(\finShort,\finShort,\finShort)=\bb\arrow[rd]\arrow[ru]
		&
		\bb(\I,\I,\I)\arrow[r]
		&
		\bb(\I,\I,\finShort)
		\\
		\mbox{\ }
		& 
		\bb(\I,\finShort,\I)\in\{1,\bb(\I,\I,\I)\}\arrow[r]\arrow[ru]
		&
		\bb(\I,\finShort,\finShort)\arrow[ru]
		&
		\mbox{\ }
	\end{tikzcd}
	\caption{All bounding numbers, including the trivial ones, and inequalities  which follow only from Proposition~\ref{prop:bNumber-monotonicity}, Theorem~\ref{thm:bNumber-for-all-ideals} and Lemma~\ref{lem:SbNumber-for-all-ideals}.}
\end{figure}
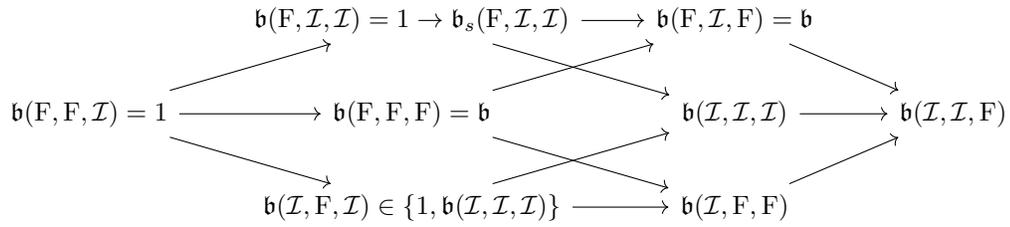

\begin{figure}[H]
	\begin{tikzcd}
		\aleph_1 \arrow[r]\arrow[rrd] 
		&
		\bb \arrow[r] 
		& 
		\bb(\I,\finShort,\finShort) \arrow[r] \arrow[rrd] 
		& 
		\dd \arrow[r] 
		& 
		\continuum 
		\\
		\bb_s(\finShort,\I,\I)=\min\{\bnumber,\adds(\I)\} \arrow[rr] \arrow[ru]\arrow[dr]
		&
		\mbox{\ } 
		&
		\bb(\I,\I,\I) \arrow[rr] \arrow[rru, shift right] 
		&
		\mbox{\ }
		&
		\bb(\I,\I,\finShort) 
\\
		\mbox{\ }
&
\adds(\I)
&
		\mbox{\ }
&
\mbox{\ }
&
\mbox{\ }
	\end{tikzcd}
	\caption{Nontrivial bounding numbers and inequalities valid for arbitrary ideals.}
\end{figure}
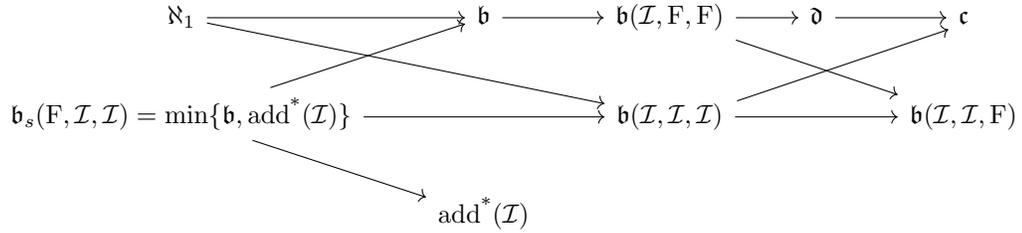

\begin{figure}[H]
	\centering
	\begin{tikzcd} 
		\aleph_1 \arrow[r]\arrow[rrd] 
		&
		\bb \arrow[r] 
		& 
		\bb(\I,\fin,\fin) \arrow[r] 
		& 
		\bb(\I,\I,\fin) \arrow[r] 
		& 
		\dd  
		\\
		&
				\mbox{\ }
		\mbox{\ }
		&
		\bb(\I,\I,\I)  \arrow[ru] 
		&
		\mbox{\ } 
	\end{tikzcd}
	\caption{Inequalities valid for weak P-ideals.}
\end{figure}

\begin{figure}[H]
	\centering 
	\begin{tikzcd} 
		\aleph_1 \arrow[r]\arrow[rrd] 
		& 
		\bb \arrow[r] 
		&  
		\bb(\I,\fin,\fin) \arrow[r] 
		& 
		\dd \arrow[r] 
		& 
		\continuum 
		\\
		\mbox{\ } 
		&
		\mbox{\ } 
		&
		\bb(\I,\I,\I) \arrow[rru,shift right] 
		&
		\mbox{\ } 
	\end{tikzcd}
	\caption{Inequalities valid for non weak P-ideals.}
\end{figure}

\begin{figure}[H]
\centering
\begin{tikzcd} 
\aleph_1\arrow[r]
& 
\bb_s(\finShort,\I,\I)\arrow[r]  \arrow[rd]
&
\bb 
\arrow[r] 
& 
\genfrac{}{}{0pt}{}{\displaystyle \bb(\I,\finShort,\finShort)
=
\bb(\I,\I,\I)}{\displaystyle =
\bb(\I,\finShort,\I)  
=
\bb(\I,\I,\finShort) }
\arrow[r] 
& 
\dd  
\\
\mbox{\ } 
	&
\mbox{\ } 
	&
\adds(\I)
	&
\mbox{\ } 
	&
\mbox{\ } 
	&
\mbox{\ } 
\end{tikzcd}
\caption{Inequalities valid for P-ideals.}
\end{figure}

%%%%%%%%%%%%%%%%%%%%%%%%%%%%%%%%%%%%%%%%%%%%%%%%%%
%%%%% SUBSECTION
%%%%%%%%%%%%%%%%%%%%%%%%%%%%%%%%%%%%%%%%%%%%%%%%%%

\section{Sums and products of ideals}
\label{sec:Sums-and-products-of-ideals}

In this section we examine the considered cardinals for ideals of the form $\I\oplus\J$ or $\I\otimes\J$. Additionally, we compute the considered cardinals in the case of not tall ideals.

%%%%%%%%%%%%%%%%%%%%%%%%%%%%%%%%%%%%%%%%%%%%%%%%%%
%%%%% SUBSECTION
%%%%%%%%%%%%%%%%%%%%%%%%%%%%%%%%%%%%%%%%%%%%%%%%%%

\subsection{Disjoint sums}

\begin{theorem}
\label{oplus}
Let $\I$ and $\J$ be ideals on $\omega$.
\begin{enumerate}
\item $\bb(\I\oplus\J,\I\oplus\J,\I\oplus\J)=\min\{\bb(\I,\I,\I),\bb(\J,\J,\J)\}$.
\item $\bb(\I\oplus\J,\I\oplus\J,\fin)=\min\{\bb(\I,\I,\fin),\bb(\J,\J,\fin)\}$.\label{oplus-IIF}
\item $\bb(\I\oplus\J,\fin,\fin)=\min\{\bb(\I,\fin,\fin),\bb(\J,\fin,\fin)\}$,
\item $\bb_s(\fin,\I\oplus\J,\I\oplus\J)=\min\{\bb_s(\fin,\I,\I),\bb_s(\fin,\J,\J)\}$.
\end{enumerate} 
\end{theorem}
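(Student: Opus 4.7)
The underlying set of $\I\oplus\J$ splits naturally as $\omega\times\{0\}\sqcup\omega\times\{1\}$, and a set $A\subseteq\omega\times 2$ lies in $\I\oplus\J$ iff its ``lower section'' $A^0=\{k:(k,0)\in A\}$ is in $\I$ and its ``upper section'' $A^1=\{k:(k,1)\in A\}$ is in $\J$. Every partition and every disjoint sequence in $\widehat{\cP}_{\I\oplus\J}$ or $\cP_{\I\oplus\J}$ decomposes coordinatewise, and the same is true for $\widehat{\cP}_\fin$ (since a finite set has finite sections). All four identities will be obtained by transporting objects through this decomposition; since the argument is uniform, I will describe it once for item (1) and indicate the two minor adjustments required for items (2)--(4).

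For the inequality $\bb(\I\oplus\J,\I\oplus\J,\I\oplus\J)\le\min\{\bb(\I,\I,\I),\bb(\J,\J,\J)\}$, I would take a witnessing family $\cE\subseteq\widehat{\cP}_\I$ of size $\bb(\I,\I,\I)$ (assume WLOG that this is the smaller of the two). For each $E=\langle E_n\rangle\in\cE$ define $\widetilde E=\langle E_n\times\{0\}\rangle\in\widehat{\cP}_{\I\oplus\J}$. Given an arbitrary partition $\langle A_n\rangle\in\cP_{\I\oplus\J}$, its lower sections $\langle A_n^0\rangle$ form an element of $\cP_\I$ (sections of a disjoint cover are disjoint and cover $\omega$; some may be empty, which is allowed). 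Choose $E\in\cE$ so that $\bigcup_n(A_n^0\cap\bigcup_{i\le n}E_i)\notin\I$; then $\bigcup_n(A_n\cap\bigcup_{i\le n}\widetilde E_i)$ has a lower section outside $\I$ and therefore lies outside $\I\oplus\J$.

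For the reverse inequality, suppose $\cE\subseteq\widehat{\cP}_{\I\oplus\J}$ with $|\cE|<\min\{\bb(\I,\I,\I),\bb(\J,\J,\J)\}$. For each $E\in\cE$ the lower sections $\langle E_n^0\rangle$ and upper sections $\langle E_n^1\rangle$ belong to $\widehat{\cP}_\I$ and $\widehat{\cP}_\J$ respectively. Since $|\cE|<\bb(\I,\I,\I)$, there exists $\langle B_n\rangle\in\cP_\I$ with $\bigcup_n(B_n\cap\bigcup_{i\le n}E_i^0)\in\I$ for every $E\in\cE$; symmetrically pick $\langle C_n\rangle\in\cP_\J$ handling the upper sides. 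Glue these into $A_n=(B_n\times\{0\})\cup(C_n\times\{1\})\in\I\oplus\J$; then $\langle A_n\rangle\in\cP_{\I\oplus\J}$ and for every $E\in\cE$ both sections of $\bigcup_n(A_n\cap\bigcup_{i\le n}E_i)$ are small in the corresponding ideal, hence the whole set is in $\I\oplus\J$. This shows $\cE$ is not a witness.

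The three remaining items are obtained by the same coordinatewise bookkeeping with two small modifications: in item (2) the disjoint sequences come from $\widehat{\cP}_\fin$, and one uses that a set in $\omega\times 2$ is finite iff both sections are finite; in items (3) and (4) the roles of the three ideal-parameters in the definition of $\bb$ (respectively $\bb_s$) are just replaced consistently. The only place that requires some care is item (4), where the asymmetric condition $\bigcup_n(A_{n+1}\cap\bigcup_{i\le n}E_i)\notin\I$ defining $\bb_s$ is preserved by the coordinatewise gluing because the index shift $n\mapsto n+1$ acts in the same way on both copies; I expect this to be the only nontrivial point to verify, though in fact the argument goes through unchanged. No deeper obstacle is anticipated since the direct sum is essentially ``two independent copies'' and everything is witnessed separately on each copy.
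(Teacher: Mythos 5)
Your argument is correct and essentially identical to the paper's: the paper also proves item (1) by transporting a witnessing family into the coordinate $\omega\times\{0\}$ for the upper bound and gluing the two coordinatewise-obtained partitions $\langle B_n\rangle\in\cP_\I$, $\langle C_n\rangle\in\cP_\J$ for the lower bound, and likewise dispatches items (2)--(4) as "similar". Your remarks on the adjustments for $\fin$ and for the index shift in $\bb_s$ are accurate (and the degenerate case where the minimum is $\continuum^+$ is trivial, since no witnessing family is needed then).
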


\begin{proof}
We will only prove the first item. 
The remaining items can be proved in a similar way.

First we prove that $\bb(\I\oplus\J,\I\oplus\J,\I\oplus\J)\leq\min\{\bb(\I,\I,\I),\bb(\J,\J,\J)\}$. We will prove that $\bb(\I\oplus\J,\I\oplus\J,\I\oplus\J)\leq\bb(\I,\I,\I)$ (the proof of $\bb(\I\oplus\J,\I\oplus\J,\I\oplus\J)\leq\bb(\J,\J,\J)$ goes in a similar way).

There is a family $\{\langle E^\alpha_n:n\in\omega\rangle: \alpha<\bb(\I,\I,\I)\}\subseteq\widehat{\cP}_\I$ 
with the property that for every $\langle A_n:n\in\omega\rangle\in\mathcal{P}_{\I}$ there is $\alpha<\bb(\I,\I,\I)$ such that 
$\bigcup_{n\in\omega}(A_n\cap \bigcup_{i\leq n}E^\alpha_i)\notin\I$.

Consider the family $\{\langle E^\alpha_n\times \{0\}: n\in\omega\rangle:\alpha<\bb(\I,\I,\I)\}\subseteq\widehat{\cP}_{\I\oplus\J}$. Let $\langle B_n\oplus C_n:n\in\omega\rangle\in\mathcal{P}_{\I\oplus\J}$. Since $\langle B_n:n\in \omega\rangle\in \cP_{\I}$, there is $\alpha<\bb(\I,\I,\I)$ such that 
$C = \bigcup_{n\in\omega}(B_n\cap \bigcup_{i\leq n}E^\alpha_i)\notin\I$.
Then $C\times\{0\}\notin \I\oplus\J$, 
and $
C \times\{0\}\subseteq
\bigcup_{n\in\omega}(B_n\oplus C_n\cap \bigcup_{i\leq n}(E^\alpha_i\times\{0\}))
$.
That finishes this part of the proof.

Now we prove $\bb(\I\oplus\J,\I\oplus\J,\I\oplus\J)\geq\min\{\bb(\I,\I,\I),\bb(\J,\J,\J)\}$. Let $\kappa<\min\{\bb(\I,\I,\I),\bb(\J,\J,\J)\}$ and $\{\langle E^\alpha_n\oplus F^\alpha_n: n\in\omega\rangle: \alpha<\kappa\}\subseteq \widehat{\cP}_{\I\oplus\J}$. Since $\kappa<\bb(\I,\I,\I)$ and $\{\langle E^\alpha_n: n\in\omega\rangle :\alpha<\kappa\}\subseteq\widehat{\cP}_{\I}$, there is $\langle B_n:n\in\omega\rangle\in\mathcal{P}_{\I}$ such that $\bigcup_{n\in\omega}(B_n\cap \bigcup_{i\leq n}E^\alpha_i)\in\I$ for all $\alpha<\kappa$. Similarly, there is $\langle C_n:n\in\omega\rangle\in\mathcal{P}_{\J}$ such that $\bigcup_{n\in\omega}(C_n\cap \bigcup_{i\leq n}F^\alpha_i)\in\J$ for all $\alpha<\kappa$.
Then 
$\langle B_n\oplus C_n:n\in \omega\rangle\in\cP_{\I\oplus\J}$ and 
$\bigcup_{n\in\omega}((B_n\oplus C_n)\cap \bigcup_{i\leq n}(E^\alpha_i\oplus F^\alpha_i))\in\I\oplus\J$ for all $\alpha<\kappa$. Thus, $\kappa<\bb(\I\oplus\J,\I\oplus\J,\I\oplus\J)$.
\end{proof}

%%%%%%%%%%%%%%%%%%%%%%%%%%%%%%%%%%%%%%%%%%%%%%%%%%
%%%%% SUBSECTION
%%%%%%%%%%%%%%%%%%%%%%%%%%%%%%%%%%%%%%%%%%%%%%%%%%

\subsection{Not tall ideals}

Using the results of the previous subsection, we are able to compute the considered cardinals for not tall ideals.

\begin{theorem}
\label{thm:bNumber-III-less-b-equal-bNumberIFF-IIF-for-not-tall-ideals}
If $\I$ is not a tall ideal, then 
$$\aleph_1\leq \bb(\I,\I,\I) \leq \bb  = \bb(\I,\fin,\fin)=\bb(\I,\I,\fin).$$
\end{theorem}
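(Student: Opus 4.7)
The strategy is to reduce everything to Theorem~\ref{oplus} by decomposing a non-$\fin$ not tall ideal as a disjoint sum whose first summand is $\fin$. The case $\I=\fin$ is immediate: by Theorem~\ref{thm:bNumber-for-all-ideals}(\ref{thm:bNumber-for-all-ideals:FFF-FIF-b},\ref{thm:bNumber-for-all-ideals:INEQUALITIES-III}) all three bounding numbers equal $\bb$ and $\aleph_1\leq\bb$, so the statement is trivially true.

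Assume $\I\neq\fin$. I would first fix an infinite $A\subseteq\omega$ witnessing non-tallness, so that $\I\restriction A=\fin(A)$. Setting $B:=\omega\setminus A$, one observes that $B$ must be infinite (otherwise every $C\in\I$ would satisfy $C\cap A\in\fin(A)$ and $C\cap B\subseteq B$ finite, forcing $\I=\fin$). Using that $\fin\subseteq\I$, that $\I$ is closed under subsets and finite unions, and that every $C\in\I$ satisfies $C\cap A\in\I\restriction A=\fin(A)$, one directly verifies the decomposition
$$\I \;=\; \fin(A)\oplus(\I\restriction B)$$
once $\omega$ is identified with the disjoint union $A\sqcup B$. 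Since $A$ and $B$ are both infinite, transferring via bijections with $\omega$ realizes $\I$ (up to an isomorphism of ideals, under which all the cardinals in question are invariant) as $\fin\oplus\I'$, where $\I'$ is an ideal on $\omega$ corresponding to $\I\restriction B$.

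Next I would apply the three relevant clauses of Theorem~\ref{oplus}, yielding
$$\bb(\I,\I,\I)=\min\{\bb(\fin,\fin,\fin),\bb(\I',\I',\I')\}\leq \bb,$$
$$\bb(\I,\fin,\fin)=\min\{\bb,\bb(\I',\fin,\fin)\}, \qquad \bb(\I,\I,\fin)=\min\{\bb,\bb(\I',\I',\fin)\}.$$
By Theorem~\ref{thm:bNumber-for-all-ideals}(\ref{thm:bNumber-for-all-ideals:INEQUALITIES-IFF},\ref{thm:bNumber-for-all-ideals:INEQUALITIES-1}) one has $\bb\leq \bb(\I',\fin,\fin)\leq \bb(\I',\I',\fin)$, so both of the latter minima collapse to $\bb$, giving the announced equalities. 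The remaining lower bound $\aleph_1\leq \bb(\I,\I,\I)$ is Theorem~\ref{thm:bNumber-for-all-ideals}(\ref{thm:bNumber-for-all-ideals:INEQUALITIES-III}).

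I do not foresee a real obstacle in this argument: the only content beyond bookkeeping is the decomposition $\I=\fin(A)\oplus(\I\restriction B)$, which is essentially built into the definition of not tall; everything else is a direct application of the sum formulas established in Theorem~\ref{oplus}.
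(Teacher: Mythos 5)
Your argument is correct and is essentially the paper's own proof: the paper likewise writes $\I=\fin(A)\oplus(\I\restriction(\omega\setminus A))$ using the witness of non-tallness and then invokes Theorem~\ref{oplus}, the only (cosmetic) difference being that it applies just the $(\I,\I,\fin)$ clause and gets the remaining (in)equalities from Theorem~\ref{thm:bNumber-for-all-ideals}, whereas you invoke all three sum formulas. Your extra bookkeeping (treating $\I=\fin$ separately and checking that $\omega\setminus A$ is infinite) is fine and, if anything, slightly more careful than the paper.
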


\begin{proof}
By Theorem~\ref{thm:bNumber-for-all-ideals},
$\bb(\I,\I,\I)\leq \bb(\I,\I,\fin)$ and $\bb\leq \bb(\I,\fin,\fin)\leq \bb(\I,\I,\fin)$, so it is enough to show $\bb(\I,\I,\fin)\leq\bb$.

Let $A\subseteq\omega$ be infinite such that for every $B\subseteq A$, $B\in \I \iff B\text{ is finite}$. Let $\fin(A)$ denote the family of all finite subsets of $A$. Then $\I=\fin(A)\oplus(\I\restriction (\omega\setminus A))$ and, by Theorem~\ref{oplus}(\ref{oplus-IIF}), we have $\bb(\I,\I,\fin)\leq\bb(\fin(A),\fin(A),\fin)=\bb(\fin,\fin,\fin)=\bb$.
\end{proof}

\begin{corollary}
\label{thm:bNumber-for-not-tall-P-ideals}
If $\I$ is a not tall P-ideal, then
$$\aleph_1\leq \bb_s(\fin,\I,\I)\leq \bb=\bb(\I,\fin,\fin)=\bb(\I,\I,\I)=\bb(\I,\I,\fin).$$
\end{corollary}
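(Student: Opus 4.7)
The plan is to observe that this corollary is simply the intersection of the two previous general results for P-ideals and for not tall ideals, combined with the upper bound on $\bb_s(\fin,\I,\I)$ for P-ideals.

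First I would apply Theorem~\ref{thm:bNumber-III-less-b-equal-bNumberIFF-IIF-for-not-tall-ideals} to the not tall ideal $\I$ to obtain
\[
\aleph_1\leq \bb(\I,\I,\I)\leq \bb = \bb(\I,\fin,\fin) = \bb(\I,\I,\fin).
\]
In particular this gives the needed equalities $\bb = \bb(\I,\fin,\fin) = \bb(\I,\I,\fin)$ and the upper bound $\bb(\I,\I,\I)\leq\bb$.

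Next I would use the P-ideal hypothesis and invoke Theorem~\ref{thm:bNumber-for-P-ideals}, which yields
\[
\bb \leq \bb(\I,\fin,\fin) = \bb(\I,\I,\I) = \bb(\I,\fin,\I) = \bb(\I,\I,\fin).
\]
In particular, $\bb\leq \bb(\I,\I,\I)$, and combining with the reverse inequality from the previous step gives $\bb(\I,\I,\I) = \bb$, closing the chain of equalities claimed in the statement.

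Finally, for the cardinal $\bb_s(\fin,\I,\I)$, I would cite Lemma~\ref{lem:SbNumber-for-all-ideals}(\ref{lem:SbNumber-for-all-ideals:FII-Pideal}), which, since $\I$ is a P-ideal, directly yields $\aleph_1 \leq \bb_s(\fin,\I,\I) \leq \bb$. This completes all the inequalities and equalities in the displayed formula. There is no real obstacle here: the corollary is essentially the conjunction of the already-proved Theorems~\ref{thm:bNumber-for-P-ideals} and~\ref{thm:bNumber-III-less-b-equal-bNumberIFF-IIF-for-not-tall-ideals}, together with the P-ideal case of Lemma~\ref{lem:SbNumber-for-all-ideals}; the only thing one has to notice is that the two inequalities $\bb(\I,\I,\I)\leq \bb$ and $\bb \leq \bb(\I,\I,\I)$ coming from these two results pinch $\bb(\I,\I,\I)$ to equal $\bb$.
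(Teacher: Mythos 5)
Your proposal is correct and matches the paper's own proof, which simply cites Theorems~\ref{thm:bNumber-for-P-ideals} and~\ref{thm:bNumber-III-less-b-equal-bNumberIFF-IIF-for-not-tall-ideals} and combines the two inequalities to pin $\bb(\I,\I,\I)=\bb$. Your explicit appeal to Lemma~\ref{lem:SbNumber-for-all-ideals}(\ref{lem:SbNumber-for-all-ideals:FII-Pideal}) for the $\aleph_1\leq\bb_s(\fin,\I,\I)\leq\bb$ part is exactly the ingredient the paper leaves implicit, so there is nothing to add.
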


\begin{proof}
Follows from Theorems~\ref{thm:bNumber-for-P-ideals} and \ref{thm:bNumber-III-less-b-equal-bNumberIFF-IIF-for-not-tall-ideals}.
\end{proof}

\begin{example}
If $\I = \{\emptyset\}\otimes\fin$, then
$$\bb = \bb_s(\fin,\I,\I)  =\bb(\I,\fin,\fin)=\bb(\I,\I,\I)=\bb(\I,\I,\fin).$$
Indeed, it is known that $\adds(\I)=\bb$ (see e.g.~\cite[p.~43]{MR2777744}), so $\bb_s(\fin,\I,\I)=\bb$.
The remaining equalities follows from   Corollary~\ref{thm:bNumber-for-not-tall-P-ideals}, because $\I$ is a P-ideal (see \cite[Example~1.2.3(b)]{MR1711328}) and it 
is not tall (indeed, if $A=\{0\}\times \omega$, then for every $B\subseteq A$, $B\in   \{\emptyset\}\otimes\fin \iff B \text{ is finite}$).
\end{example}

%%%%%%%%%%%%%%%%%%%%%%%%%%%%%%%%%%%%%%%%%%%%%%%%%%
%%%%% SUBSECTION
%%%%%%%%%%%%%%%%%%%%%%%%%%%%%%%%%%%%%%%%%%%%%%%%%%

\subsection{Fubini products}

In this subsection we compute the considered cardinals for Fubini products of ideals. We state the lemmas in a more general form than needed in this section, because we will apply some of them also in other sections.

\subsubsection{The cardinal $\bb_s(\fin,\I,\I)$.}

\begin{theorem}
\label{thm:bNumber-for-Fubini-products-FII}
	Let $\J,\K$ be ideals on $\omega$.
		\begin{enumerate}
		\item If $\I=\J\otimes\K$ or  $\I=\J\otimes\{\emptyset\}$, then $\bb_s(\fin,\I,\I)=\aleph_0$.\label{thm:bNumber-for-Fubini-products-FII-notP}
		\item If $\I=\{\emptyset\}\otimes\K$, then $\bb_s(\fin,\I,\I)=\bb_s(\fin,\K,\K)$.
	\end{enumerate}
\end{theorem}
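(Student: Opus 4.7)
The strategy is to reduce both statements to Theorem~\ref{thm:SbNumber-min-b-addStar}, which gives $\bb_s(\fin,\I,\I)=\min\{\bb,\adds(\I)\}$, and then compute $\adds(\I)$ (or at least $\min\{\bb,\adds(\I)\}$) in each case. For part (1), I will show $\adds(\I)=\aleph_0$ in both sub-cases by exhibiting the countable witness $\cF=\{\{n\}\times\omega:n\in\omega\}$. Each fibre $\{n\}\times\omega$ lies in $\I$: when $\I=\J\otimes\K$, we have $\{x:(\{n\}\times\omega)_x\notin\K\}=\{n\}\in\J$ (the fibre at $n$ is $\omega\notin\K$, all other fibres are empty), and when $\I=\J\otimes\{\emptyset\}$, $\{x:(\{n\}\times\omega)_x\ne\emptyset\}=\{n\}\in\J$. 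Given any $A\in\I$, the relevant ``bad'' set belongs to $\J$, so its complement is infinite; pick $n$ with $(A)_n\in\K$ in the first case and $(A)_n=\emptyset$ in the second. Since $\K$ is proper, $(A)_n\in\K$ cannot be cofinite (else $\omega\in\K$), so $(\{n\}\times\omega)\setminus A=\{n\}\times(\omega\setminus(A)_n)$ is infinite in both sub-cases. Hence $\adds(\I)\le\aleph_0$ and Theorem~\ref{thm:SbNumber-min-b-addStar} yields $\bb_s(\fin,\I,\I)=\min\{\bb,\aleph_0\}=\aleph_0$.

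For part (2), my plan is to establish $\min\{\bb,\adds(\I)\}=\min\{\bb,\adds(\K)\}$ via the two inequalities $\adds(\I)\le\adds(\K)$ and $\adds(\I)\ge\min\{\bb,\adds(\K)\}$. The upper bound is a direct lift: if $\{G_\alpha:\alpha<\adds(\K)\}\subseteq\K$ witnesses $\adds(\K)$, then $F_\alpha=\omega\times G_\alpha\in\I$ and, for any $A\in\I$, $(A)_0\in\K$ is beaten by some $G_\alpha$ with $|G_\alpha\setminus(A)_0|=\aleph_0$, yielding $\{0\}\times(G_\alpha\setminus(A)_0)\subseteq F_\alpha\setminus A$. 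For the lower bound, fix $\cF\subseteq\I$ with $|\cF|<\min\{\bb,\adds(\K)\}$ and seek $A\in\I$ with $F\setminus A$ finite for every $F\in\cF$. The family of all sections $\{(F)_x:F\in\cF,\,x\in\omega\}\subseteq\K$ has size strictly below $\adds(\K)$, so there is $B\in\K$ with $(F)_x\setminus B$ finite for every $F$ and $x$. Setting $h_F(x)=\max((F)_x\setminus B)+1$ (with the convention $\max\emptyset=-1$) and using $|\cF|<\bb$, pick $h\in\omega^\omega$ with $h_F\le^* h$ for every $F\in\cF$. The set $A=(\omega\times B)\cup\{(x,y):y<h(x)\}$ has sections $B\cup[0,h(x))\in\K$, so $A\in\I$, and a straightforward check shows that for each $F\in\cF$ the cofinitely many $x$ with $h_F(x)\le h(x)$ contribute nothing to $F\setminus A$ while the remaining finitely many rows contribute only finitely many points, making $F\setminus A$ finite.

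The delicate step is the lower bound in part (2). The naïve attempt $A=\omega\times B$ fails because the finite row-residues $(F)_x\setminus B$ can accumulate across rows into an infinite set, even though each individual row is well-controlled. The remedy is a second diagonalization against the row-error functions $h_F$, and this is exactly where the bounding number $\bb$ enters and caps $\adds(\I)$ at $\min\{\bb,\adds(\K)\}$ rather than at $\adds(\K)$ alone.
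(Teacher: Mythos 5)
Your proposal is correct and follows essentially the same route as the paper: both parts reduce to Theorem~\ref{thm:SbNumber-min-b-addStar}, with part (1) amounting to the observation that $\adds(\J\otimes\K)=\adds(\J\otimes\{\emptyset\})=\aleph_0$ (i.e.\ these are not P-ideals), and part (2) using the same two ingredients as the paper's proof --- transferring an $\adds(\K)$-witness onto a single row for the upper bound, and, for the lower bound, almost-covering all sections by one $B\in\K$ and then dominating the row-error functions to build a set whose sections are $B\cup[0,h(x))$. The only points left implicit, which the paper treats explicitly but which are harmless, are the degenerate cases: when $\adds(\K)=\cc^+$ there is no witness family (the needed inequality is then trivial since $\min\{\bb,\adds(\K)\}=\bb$), and when $\K$ is not a P-ideal your family of sections need not have size below $\adds(\K)=\aleph_0$ (but there $\adds(\I)\geq\aleph_0$ holds automatically).
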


\begin{proof}
(1) It suffices to observe that $\J\otimes\K$ and $\J\otimes\{\emptyset\}$ are not P-ideals.

(2) First we will show $\bb_s(\fin,\I,\I)\leq\bb_s(\fin,\K,\K)$. 

If $\adds(\K)=\cc^+$, then $\bb_s(\fin,\I,\I)\leq\bb=\bb_s(\fin,\K,\K)$ by Theorem \ref{thm:SbNumber-min-b-addStar}. So assume that $\adds(\K)\leq\cc$ and let $\mathcal{F}\subseteq\K$ be such that $|\cF|=\adds(\cK)$ and for any $A\in\K$ there is $F\in\mathcal{F}$ such that $F\setminus A\notin\fin$. 

Consider the family $\left\{\{0\}\times F: F\in\mathcal{F}\right\}\subseteq\I$. We claim that it witnesses $\adds(\I)\leq\adds(\K)$. 

Fix $A\in\I$. Then $(A)_0\in\K$, so there is $F\in\mathcal{F}$ such that $|F\setminus (A)_0|=\aleph_0$. We get $|\{0\}\times F\setminus A|=|F\setminus (A)_0|=\aleph_0$. Thus $\adds(\I)\leq\adds(\K)$ which implies $\bb_s(\fin,\I,\I)\leq\bb_s(\fin,\K,\K)$ by Theorem \ref{thm:SbNumber-min-b-addStar}.

Now we show $\bb_s(\fin,\I,\I)\geq\bb_s(\fin,\K,\K)$.

If $\K$ is not a P-ideal, then $\I$ is not a P-ideal as well and we have $\bb_s(\fin,\I,\I)=\bb_s(\fin,\K,\K)=\aleph_0$. So we can assume that $\K$ is a P-ideal.

Let $\kappa<\bb_s(\fin,\K,\K)$. We will show that $\kappa<\adds(\I)$. By Theorem \ref{thm:SbNumber-min-b-addStar}, it will finish the proof.

Fix $\left\{A^\alpha: \alpha<\kappa\right\}\subseteq\I$. Then $\left\{(A^\alpha)_n: n\in\omega,\alpha<\kappa\right\}\subseteq\K$ has cardinality $|\omega\cdot\kappa|<\bb_s(\fin,\K,\K)\leq\adds(\K)$, so there is $A\in\K$ such that $(A^\alpha)_n\setminus A\in\fin$ for all $n\in\omega$ and $\alpha<\kappa$.

Define $f_\alpha(n)=\max((A^\alpha)_n\setminus A)$ for all $n\in\omega$ and $\alpha<\kappa$. Since $\kappa<\bb_s(\fin,\K,\K)\leq\bb$, there is $g\in\omega^\omega$ such that $f_\alpha\leq^* g$ for all $\alpha<\kappa$. Define $B\subseteq\omega\times\omega$ by $(B)_n=A\cup\{0,1,\ldots,g(n)\}$. Observe that $B\in\I$. If we will show that $A^\alpha\setminus B$ is finite for all $\alpha<\kappa$, the proof will be finished.

Fix $\alpha<\kappa$. There is $k\in\omega$ such that $f_\alpha(n)\leq g(n)$ for all $n\geq k$. Then $(A^\alpha\setminus B)_{n}\subseteq (A^\alpha)_{n}\setminus A$ is finite for all $n<k$ and $(A^\alpha\setminus B)_{n}=(A^\alpha)_{n}\setminus (B)_{n}=(A^\alpha)_{n}\setminus (A\cup\{0,1,\ldots,g(n)\})=\emptyset$ for all $n\geq k$. Hence $A^\alpha\setminus B$ is finite.
\end{proof}

\subsubsection{The cardinal $\bb(\I,\fin,\fin)$.}

The following three lemmas will be useful in our considerations. 

\begin{lemma}
	\label{lemma:Fubini-product-inclusion}
	Let $\langle B_n\rangle$ and $\langle C_n\rangle$ be partitions of $\omega$. If $C_n\subseteq\bigcup_{m\geq n}B_m$ (i.e., $C_n\cap\bigcup_{m<n}B_m=\emptyset$) for all $n\in\omega$, then $\bigcup_{n\in\omega}(C_n\cap \bigcup_{i\leq n}A_i)\subseteq\bigcup_{n\in\omega}(B_n\cap \bigcup_{i\leq n}A_i)$ for any $\langle A_n\rangle$.
\end{lemma}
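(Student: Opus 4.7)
The plan is an element-chase. Pick any $x$ in the left-hand set. By definition, there exist $n \in \omega$ and $i \leq n$ with $x \in C_n$ and $x \in A_i$. The goal is to exhibit some index $m$ witnessing that $x$ lies in the right-hand set, i.e., with $x \in B_m$ and $x \in A_j$ for some $j \leq m$.

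The natural candidate for $m$ is given to us for free: since $\langle B_n\rangle$ is a partition of $\omega$, there is a unique $m \in \omega$ with $x \in B_m$. Now I would invoke the hypothesis $C_n \subseteq \bigcup_{k \geq n} B_k$ applied to this $x \in C_n$: it forces $m \geq n$. Combining with $i \leq n$, we get $i \leq m$, so $x \in B_m \cap \bigcup_{j \leq m} A_j$, which is contained in the right-hand union.

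There is really no obstacle here — the only subtlety is to remember that $\langle B_n\rangle$ being a partition (not merely a cover) is what licenses choosing the unique $m$ with $x \in B_m$ and then comparing it to $n$ via the containment hypothesis. The parenthetical reformulation in the statement ($C_n \cap \bigcup_{k<n}B_k = \emptyset$) is exactly what rules out $m < n$. Once these two ingredients are assembled, the one-line chase above closes the proof.
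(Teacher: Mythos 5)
Your argument is correct and is essentially the paper's own proof: an element chase taking $x\in C_n\cap\bigcup_{i\leq n}A_i$, using the hypothesis to find $m\geq n$ with $x\in B_m$, and concluding $x\in B_m\cap\bigcup_{j\leq m}A_j$. One small remark: the partition property of $\langle B_n\rangle$ is not actually needed for this step, since the containment $C_n\subseteq\bigcup_{m\geq n}B_m$ already supplies some $m\geq n$ with $x\in B_m$; uniqueness of $m$ plays no role.
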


\begin{proof}
	If $x\in C_n\cap \bigcup_{i\leq n}A_i$ for some $n\in\omega$, then there is $m\geq n$ with $x\in B_m$ and we have $x\in B_m\cap \bigcup_{i\leq n}A_i\subseteq B_m\cap \bigcup_{i\leq m}A_i$.
\end{proof}

\begin{lemma}
\label{lemma:Fubini-product-bounded-from-above-by-second-ideal-IFF}
    Let $\J,\K$ be ideals on $\omega$ 	
    (here we also allow $\J=\{\emptyset\}$)
    and $\I$ be an ideal on $\omega^2$.
	If $\I\subseteq\J\otimes\K$, then $\bb(\I,\fin(\omega^2),\fin(\omega^2))\leq\bb(\K,\fin(\omega),\fin(\omega))$.
\end{lemma}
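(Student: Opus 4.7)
Let $\kappa=\bb(\K,\fin(\omega),\fin(\omega))$ and fix a witnessing family $\{\langle E^\alpha_n:n\in\omega\rangle:\alpha<\kappa\}\subseteq\cP_{\fin(\omega)}$, available via Proposition~\ref{prop:bNumber-characterization-partitions}. By monotonicity (Proposition~\ref{prop:bNumber-monotonicity}) and the hypothesis $\I\subseteq\J\otimes\K$ it suffices to bound $\bb(\J\otimes\K,\fin(\omega^2),\fin(\omega^2))$ by $\kappa$. The plan is to lift each $\langle E^\alpha_n\rangle$ to a partition $\langle F^\alpha_n\rangle\in\cP_{\fin(\omega^2)}$ and verify that the resulting family of size $\kappa$ witnesses the left-hand side.

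First I would try the simplest column-wise lift $F^\alpha_n=(\{0\}\times E^\alpha_n)\cup G_n$, where $\langle G_n\rangle\in\cP_{\fin((\omega\setminus\{0\})\times\omega)}$ is fixed. For any partition $\langle A_n\rangle\in\cP_{\fin(\omega^2)}$, the vertical section $\langle(A_n)_0\rangle$ lies in $\cP_{\fin(\omega)}$, so the $\K$-witness supplies $\alpha<\kappa$ with $\bigcup_n((A_n)_0\cap\bigcup_{i\le n}E^\alpha_i)\notin\K$; evaluating sections, $D:=\bigcup_n(A_n\cap\bigcup_{i\le n}F^\alpha_i)$ satisfies $(D)_0\notin\K$. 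In the degenerate case $\J=\{\emptyset\}$ this already forces $D\notin\I$, since every $A\in\I\subseteq\{\emptyset\}\otimes\K$ has \emph{all} vertical sections in $\K$, and the proof is complete.

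For general $\J$, however, $\{0\}\in\J$ and the single bad column at $x=0$ is absorbed by $\J\otimes\K$, so the single-column lift is insufficient. The natural remedy is to disperse the witness over infinitely many columns: pick a bijection $\sigma:\omega\to\omega\times\omega$ such that, for each fixed $x$, the set $\{m:\sigma(m)_1=x\}$ is enumerated in the natural order of $\sigma(m)_2$ (Cantor pairing works), and set $F^\alpha_m=\{\sigma(m)_1\}\times E^\alpha_{\sigma(m)_2}$. Then each column $x$ of $\langle\bigcup_{i\le m}F^\alpha_i\rangle$ increases through the blocks $E^\alpha_0,E^\alpha_1,\dots$, and Lemma~\ref{lemma:Fubini-product-inclusion} allows the column-dependent reorderings to be absorbed into a genuine $\cP_{\fin(\omega)}$ configuration applied to the column partition $\langle(A_m)_x\rangle$. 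This gives, for each $x$, some $\alpha(x)<\kappa$ with $(D)_x\notin\K$.

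The main obstacle — and the heart of the argument — is the final uniformization: producing a single $\alpha<\kappa$ for which $\{x:(D)_x\notin\K\}\notin\J$, so that $D\notin\J\otimes\K\supseteq\I$. A naive pigeonhole on the map $x\mapsto\alpha(x)$ fails when $\kappa>\aleph_0$, since countably many columns may require $\aleph_0$ distinct witnesses from $\kappa$. The decisive step must therefore exploit the specific Fubini-product structure of $\J\otimes\K$ together with the uniformity of the dispersed lift, arranging $\sigma$ (and possibly strengthening the witness to a $\bb_1$-form that is stable under the finite re-indexings introduced by the lift) so that some fixed $\alpha$ is responsible for a $\J$-positive set of columns. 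This diagonalization is where the inclusion $\I\subseteq\J\otimes\K$ is used essentially, and it is the technical crux of the proof.
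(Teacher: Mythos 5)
Your first two paragraphs are fine as far as they go: the reduction to $\J\otimes\K$ via monotonicity is legitimate, and the single-column lift does settle the degenerate case $\J=\{\emptyset\}$ correctly. But for general $\J$ the proposal stops exactly where the proof has to start. After dispersing the witness over all columns you obtain, for each column $x$, some $\alpha(x)<\kappa$ with $(D)_x\notin\K$, and you yourself observe that no pigeonhole or naive diagonalization turns this into a single $\alpha$ with $\{x:(D)_x\notin\K\}\notin\J$. The final paragraph only names this as ``the technical crux'' and gestures at arranging $\sigma$ or strengthening the witness, without any construction or verification; so the statement is not proved. Moreover, the guiding intuition there is slightly off: no clever choice of a $\J$-positive set of columns is needed, and the inclusion $\I\subseteq\J\otimes\K$ plays no essential diagonalization role.

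The missing idea is to avoid per-column applications of the $\K$-witness altogether. The paper lifts the witness by $A^\alpha_n=(\{0,\dots,n-1\}\times E^\alpha_n)\cup(\{n\}\times\bigcup_{i\leq n}E^\alpha_i)$, so that for every column $j$ and all $n\geq j$ one has $\bigl(\bigcup_{i\leq n}A^\alpha_i\bigr)_j=\bigcup_{i\leq n}E^\alpha_i$: the cumulative unions look the same in \emph{every} column. Then, given $\langle B_n\rangle\in\cP_{\fin(\omega^2)}$, all of its column data are merged into one partition of $\omega$ by setting $C_n=\bigl(\bigcup_{i\leq n}((B_n)_i\cup (B_i)_n)\bigr)\setminus\bigcup_{i<n}C_i$; a single application of the $\K$-witness to $\langle C_n\rangle$ produces one $\alpha$, and (via Lemma~\ref{lemma:Fubini-product-inclusion}, since $C_n\cap(B_m)_j=\emptyset$ for $m<n>j$) this $\alpha$ makes $(D)_j\notin\K$ for \emph{every} $j$. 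Hence $\{j:(D)_j\notin\K\}=\omega\notin\J$ no matter what $\J$ is, and $\I\subseteq\J\otimes\K$ is used only for the trivial last step $D\notin\J\otimes\K\Rightarrow D\notin\I$. The merging of all columns into one $\fin(\omega)$-partition (possible because all pieces of $B_n$ are finite) is precisely the uniformization your sketch leaves open; without it, or an equivalent device, the argument does not go through.
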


\begin{proof}
	Let $\{\langle E^\alpha_n:n\in\omega\rangle: \alpha<\kappa\}\subseteq\cP_{\fin(\omega)}$ witness $\kappa=\bb(\K,\fin,\fin)$.
	
	For each $n\in\omega$ and $\alpha<\kappa$ define $A^\alpha_n=(\{0,\ldots,n-1\}\times E^\alpha_n)\cup(\{n\}\times\bigcup_{i\leq n}E^\alpha_i)$. Then $\cA=\{\langle A^\alpha_n:n\in\omega\rangle: \alpha<\kappa\}\subseteq\cP_{\fin(\omega^2)}$.
	
	We will show that $\cA$ witnesses $\bb(\I,\fin(\omega^2),\fin(\omega^2))\leq \kappa$.
	
	Let $\langle B_n:n\in\omega\rangle\in\cP_{\fin(\omega^2)}$. Then 
	$\langle (B_n)_i:n\in\omega\rangle\in\cP_{\fin(\omega)}$ for each $i\in\omega$.
	
	Now we inductively define 
	$$C_n=\left(\bigcup_{i\leq n}((B_n)_i\cup (B_i)_n)\right)\setminus\left(\bigcup_{i<n}C_i\right),$$
	and observe that $\langle C_n:n\in\omega\rangle\in\cP_{\fin(\omega)}$. Therefore, there is $\alpha<\kappa$ such that $\bigcup_{n\in\omega}(C_n\cap \bigcup_{i\leq n}E^\alpha_i)\notin\K$. 
	
If we show that $(\bigcup_{n\in\omega}(B_n\cap \bigcup_{i\leq n}A^\alpha_i))_j\notin \cK$
for every $j\in \omega$, then $\bigcup_{n\in\omega}(B_n\cap \bigcup_{i\leq n}A^\alpha_i)\notin\I$, and the proof will be finished.

	Fix $j\in\omega$ and note that if $n>j$ then
 $(\bigcup_{i\leq n}A^\alpha_i)_j =(\bigcup_{i\leq n}E^\alpha_i)_j$, and
$(B_n)_j\cap \bigcup_{i\leq n}(E^\alpha_i)_j
\supseteq
C_n\cap \bigcup_{i\leq n}(E^\alpha_i)_j$ (the inclusion follows from Lemma~\ref{lemma:Fubini-product-inclusion} because   $C_n\cap\bigcup_{m<n}(B_m)_j=\emptyset$).
Thus, we have
\begin{equation*}
\begin{split}
&	
\left(
\bigcup_{n\in\omega}
\left(B_n\cap \bigcup_{i\leq n}A^\alpha_i\right)
\right)_j 
\supseteq
\left(
\bigcup_{n>j}\left(B_n\cap \bigcup_{i\leq n}A^\alpha_i\right)
\right)_j
=
\bigcup_{n>j}\left((B_n)_j\cap \left(\bigcup_{i\leq n}A^\alpha_i\right)_j\right) \\
&=
\bigcup_{n>j}\left((B_n)_j\cap \left(\bigcup_{i\leq n}E^\alpha_i\right)_j\right)
\supseteq
\bigcup_{n>j}\left(C_n\cap \left(\bigcup_{i\leq n}E^\alpha_i\right)_j \right)\notin\K.
\end{split}
\end{equation*}
\end{proof}

\begin{lemma}
\label{lemma:Fubini-product-bounded-form-below-by-second-ideal-IFF}
Let $\cK$ be an ideal on $\omega$ and $\I$ be an ideal on $\omega^2$.
	If $\{\emptyset\}\otimes\K\subseteq\I$, then $\bb(\I,\fin(\omega^2),\fin(\omega^2))\geq\bb(\K,\fin(\omega),\fin(\omega))$.
\end{lemma}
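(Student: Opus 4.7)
The plan is to mirror the proof of Lemma~\ref{lemma:Fubini-product-bounded-from-above-by-second-ideal-IFF} but dually: take a family $\{\langle E^\alpha_n:n\in\omega\rangle:\alpha<\kappa\}\subseteq\widehat{\cP}_{\fin(\omega^2)}$ of size $\kappa<\bb(\K,\fin(\omega),\fin(\omega))$, and construct a partition $\langle A_n\rangle\in\cP_{\fin(\omega^2)}$ such that $\bigcup_n(A_n\cap\bigcup_{i\leq n}E^\alpha_i)\in\I$ for every $\alpha<\kappa$. Since $\{\emptyset\}\otimes\K\subseteq\I$, it is enough to ensure that every vertical section of this set lies in $\K$.

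The first step is to slice the data horizontally. For each $\alpha<\kappa$ and $j\in\omega$, the sequence $\langle(E^\alpha_n)_j:n\in\omega\rangle$ belongs to $\widehat{\cP}_{\fin(\omega)}$. Because $\bb(\K,\fin(\omega),\fin(\omega))\geq\bb\geq\aleph_1$ (by Theorem~\ref{thm:bNumber-for-all-ideals}(\ref{thm:bNumber-for-all-ideals:INEQUALITIES-IFF})), the combined family $\{\langle(E^\alpha_n)_j\rangle_n:\alpha<\kappa,\,j\in\omega\}$ still has cardinality $\kappa\cdot\aleph_0<\bb(\K,\fin(\omega),\fin(\omega))$. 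Invoking the $\bb_1$-form of the definition of $\bb(\K,\fin(\omega),\fin(\omega))$ (Proposition~\ref{prop:bNumber-characterization-partitions}), I get a partition $\langle C_n\rangle\in\cP_{\fin(\omega)}$ with
\[\bigcup_{n\in\omega}\Bigl(C_n\cap\bigcup_{i\leq n}(E^\alpha_i)_j\Bigr)\in\K\quad\text{for every }\alpha<\kappa,\,j\in\omega.\]

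The heart of the argument is lifting $\langle C_n\rangle$ to a suitable partition of $\omega^2$. I would set
\[A_n=\Bigl(C_n\times\{0,1,\ldots,n-1\}\Bigr)\cup\Bigl(\bigcup_{m\leq n}C_m\times\{n\}\Bigr),\]
so that $(A_n)_j=C_n$ for $j<n$, $(A_n)_n=\bigcup_{m\leq n}C_m$ (a finite set), and $(A_n)_j=\emptyset$ for $j>n$. A direct check confirms that each $A_n$ is finite and that $\{A_n:n\in\omega\}$ is a partition of $\omega^2$: letting $m(i)$ denote the unique index with $i\in C_{m(i)}$, the point $(i,j)$ lies in $A_{\max(m(i),j)}$ and in no other $A_n$.

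Finally, I would verify the key bound for each $\alpha<\kappa$ and $j\in\omega$. Splitting the index $n$ into the three cases above, the $j$-th vertical section of $\bigcup_n(A_n\cap\bigcup_{i\leq n}E^\alpha_i)$ equals
\[\Bigl(\bigcup_{m\leq j}C_m\cap\bigcup_{i\leq j}(E^\alpha_i)_j\Bigr)\cup\bigcup_{n>j}\Bigl(C_n\cap\bigcup_{i\leq n}(E^\alpha_i)_j\Bigr),\]
which is a finite set joined with a subset of $\bigcup_n(C_n\cap\bigcup_{i\leq n}(E^\alpha_i)_j)\in\K$; hence the whole section is in $\K$, placing the full set in $\{\emptyset\}\otimes\K\subseteq\I$. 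The main subtlety is reconciling two competing demands on $\langle A_n\rangle$: its vertical sections must eventually coincide with $C_n$ (to inherit the $\K$-bound column by column), yet each $A_n$ must be finite (to lie in $\cP_{\fin(\omega^2)}$); the cutoff $(A_n)_n=\bigcup_{m\leq n}C_m$, absorbed in the finite ``diagonal correction'' above, is what makes both conditions compatible.
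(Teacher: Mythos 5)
Your overall strategy is the paper's: slice each $\langle E^\alpha_n\rangle$ into sections, apply the definition of $\bb(\K,\fin(\omega),\fin(\omega))$ to the $\kappa\cdot\aleph_0$-sized family of sliced sequences to obtain a single partition $\langle C_n\rangle$ bounding all of them, and then lift $\langle C_n\rangle$ to a partition of $\omega^2$ into finite sets via a finite ``diagonal'' correction. (A small point: the sliced sequences lie only in $\widehat{\cP}_{\fin(\omega)}$, so you should invoke the original definition of $\bb(\K,\fin(\omega),\fin(\omega))$ rather than its $\bb_1$-form; this changes nothing of substance.)

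However, as written the lifting step clashes with the paper's coordinate-sensitive conventions, and this does break the final step. The paper defines $(A)_x=\{y:(x,y)\in A\}$ and $\{\emptyset\}\otimes\K=\{A:(A)_x\in\K \text{ for all } x\}$, so membership in $\{\emptyset\}\otimes\K$ is governed by sections in the \emph{first} coordinate. With this convention your set $A_n=\bigl(C_n\times\{0,\dots,n-1\}\bigr)\cup\bigl(\bigcup_{m\leq n}C_m\times\{n\}\bigr)$ has sections $(A_n)_x=\{0,\dots,n\}$ for $x\in C_n$, $(A_n)_x=\{n\}$ for $x\in C_m$ with $m<n$, and $\emptyset$ otherwise --- not the claimed $C_n$. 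Your claimed identities are correct only if all sections are read in the \emph{second} coordinate; but then your computation shows only that every second-coordinate section of $X=\bigcup_{n}(A_n\cap\bigcup_{i\leq n}E^\alpha_i)$ is in $\K$, which does not place $X$ in $\{\emptyset\}\otimes\K$ (for instance $\{0\}\times\omega$ has all second-coordinate sections equal to $\{0\}$, yet lies outside $\{\emptyset\}\otimes\K$), so the hypothesis $\{\emptyset\}\otimes\K\subseteq\I$ cannot be applied. The repair is immediate: transpose the lifted sets, i.e.\ take $A_n=\bigl(\{0,\dots,n-1\}\times C_n\bigr)\cup\bigl(\{n\}\times\bigcup_{m\leq n}C_m\bigr)$, keep the slicing of the $E^\alpha_n$ by the first coordinate as in your opening step, and run the same verification with first-coordinate sections; this transposed construction is exactly the partition used in the paper's proof, and with it your argument closes correctly.
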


\begin{proof}
	We will show that $\kappa<\bb(\K,\fin(\omega),\fin(\omega))$ implies $\kappa<\bb(\I,\fin(\omega^2),\fin(\omega^2))$. Let $\kappa<\bb(\K,\fin,\fin)$ and $\{\langle E^\alpha_n:n\in\omega\rangle:\alpha<\kappa\}\subseteq\cP_{\fin(\omega^2)}$.
	
	For each $n,j\in\omega$ and $\alpha<\kappa$ define $A^{\alpha,j}_n= (E^\alpha_n)_j$.
	Then $\{\langle A^{\alpha,j}_n:n\in\omega\rangle:j\in\omega,\alpha<\kappa\}\subseteq\cP_{\fin(\omega)}$.
	
	Since  $|\omega\times\kappa|=\kappa<\bb(\K,\fin(\omega),\fin(\omega))$, there is $\langle B_n\rangle\in\cP_{\fin(\omega)}$ with $\bigcup_{n\in\omega}(B_n\cap \bigcup_{i\leq n}A^{\alpha,j}_i)\in\K$ for all $j\in\omega$ and $\alpha<\kappa$. 
	
	Define $\langle C_n\rangle\in\cP_{\fin(\omega^2)}$ by $C_n=(\bigcup_{i<n}\{i\}\times B_n)\cup(\{n\}\times\bigcup_{i\leq n}B_i)$ for all $n\in\omega$.

	Fix $\alpha<\kappa$ and denote $X=\bigcup_{n\in\omega}(C_n\cap \bigcup_{i\leq n}E^\alpha_i)$. 
	If we show that $X\in \I$, the proof will be finished. Since 
$\{\emptyset\}\otimes\K\subseteq\I$, it is enough to show that $(X)_j\in \cK$ for every $j\in \omega$.
	
	Take $j\in\omega$.
	If 
	$n>j$, then
	$(C_n)_j=B_n$, so 
	$$(X)_j=
	\bigcup_{n\in\omega}\left((C_n)_j\cap \bigcup_{i\leq n}(E^\alpha_i)_j\right)
	\subseteq
	\left(\bigcup_{n\leq j}B_n\right)
	\cup
	\bigcup_{n>j}\left(B_n\cap\bigcup_{i\leq n}A^{\alpha,j}_i\right)\in\K.$$
\end{proof}

\begin{theorem}
		\label{thm:bNumber-for-Fubini-products-IFF}
	Let $\J,\K$ be ideals on $\omega$.
		\begin{enumerate}
		\item If $\I=\J\otimes\K$ or $\I=\{\emptyset\}\otimes\K$, then $\bb(\I,\fin,\fin)=\bb(\K,\fin,\fin)$.
		\label{thm:bNumber-for-Fubini-products:IFF}
		\item If $\I=\J\otimes\{\emptyset\}$, then $\bb(\I,\fin,\fin)=\bb$.
		\label{thm:bNumber-for-Fubini-products:IFF2}
	\end{enumerate}
\end{theorem}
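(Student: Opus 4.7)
The plan is to derive both parts from the two sandwich lemmas (Lemma~\ref{lemma:Fubini-product-bounded-from-above-by-second-ideal-IFF} and Lemma~\ref{lemma:Fubini-product-bounded-form-below-by-second-ideal-IFF}); the combinatorial content has already been packaged there, so what remains is just to check the appropriate ideal-theoretic inclusions in each case.

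For part (\ref{thm:bNumber-for-Fubini-products:IFF}), I would treat both possibilities for $\I$ uniformly by verifying the sandwich $\{\emptyset\}\otimes\K \subseteq \I \subseteq \J'\otimes\K$ for a suitable ideal $\J'$. When $\I=\J\otimes\K$, the inclusions are immediate with $\J'=\J$. When $\I=\{\emptyset\}\otimes\K$ the lower inclusion is an equality, while for the upper inclusion I can either invoke the lemma with $\J=\{\emptyset\}$ (explicitly permitted in its statement) or observe directly that $\{\emptyset\}\otimes\K\subseteq\fin\otimes\K$ and apply the lemma with $\J'=\fin$. Lemma~\ref{lemma:Fubini-product-bounded-from-above-by-second-ideal-IFF} then yields $\bb(\I,\fin,\fin)\leq\bb(\K,\fin,\fin)$, and Lemma~\ref{lemma:Fubini-product-bounded-form-below-by-second-ideal-IFF} yields the reverse inequality.

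For part (\ref{thm:bNumber-for-Fubini-products:IFF2}), where $\I=\J\otimes\{\emptyset\}$, I would establish $\fin\subseteq\I\subseteq\J\otimes\fin$. The first inclusion holds because every ideal contains $\fin$; the second because if $A\in\J\otimes\{\emptyset\}$, then $(A)_n=\emptyset\in\fin$ for all $n$ outside some set in $\J$, so $\{n:(A)_n\notin\fin\}\in\J$. Lemma~\ref{lemma:Fubini-product-bounded-from-above-by-second-ideal-IFF} applied with $\K=\fin$ then gives $\bb(\I,\fin,\fin)\leq\bb(\fin,\fin,\fin)=\bb$ (using Theorem~\ref{thm:bNumber-for-all-ideals}(\ref{thm:bNumber-for-all-ideals:FFF-FIF-b})), and the reverse inequality is immediate from Proposition~\ref{prop:bNumber-monotonicity} applied to $\fin\subseteq\I$ (or equivalently from Theorem~\ref{thm:bNumber-for-all-ideals}(\ref{thm:bNumber-for-all-ideals:INEQUALITIES-IFF})).

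There is no real obstacle here: the only thing to be careful about is which of the two lemmas applies in each of the three sub-cases, and in particular to remember that Lemma~\ref{lemma:Fubini-product-bounded-from-above-by-second-ideal-IFF} is stated with the extra flexibility of $\J=\{\emptyset\}$ precisely so that the case $\I=\{\emptyset\}\otimes\K$ of part (\ref{thm:bNumber-for-Fubini-products:IFF}) is covered with no extra work.
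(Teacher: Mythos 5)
Your proposal is correct and follows essentially the same route as the paper: part (1) is exactly the combination of Lemma~\ref{lemma:Fubini-product-bounded-from-above-by-second-ideal-IFF} and Lemma~\ref{lemma:Fubini-product-bounded-form-below-by-second-ideal-IFF} (with the $\J=\{\emptyset\}$ allowance covering $\I=\{\emptyset\}\otimes\K$), and part (2) is the inclusion $\J\otimes\{\emptyset\}\subseteq\J\otimes\fin$ fed into the first lemma together with the standing lower bound $\bb\leq\bb(\I,\fin,\fin)$. The only cosmetic difference is that you also note $\fin\subseteq\I$ and the monotonicity alternative, which the paper bypasses by citing Theorem~\ref{thm:bNumber-for-all-ideals}(\ref{thm:bNumber-for-all-ideals:INEQUALITIES-IFF}) directly.
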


\begin{proof}\ 
	(\ref{thm:bNumber-for-Fubini-products:IFF})
	The inequality ``$\leq$'' follows from 
	Lemma~\ref{lemma:Fubini-product-bounded-from-above-by-second-ideal-IFF}, and  ``$\geq$'' follows from Lemma~\ref{lemma:Fubini-product-bounded-form-below-by-second-ideal-IFF}.
	(\ref{thm:bNumber-for-Fubini-products:IFF2})
	Since $\I\subseteq\J\otimes\fin$, by Lemma~\ref{lemma:Fubini-product-bounded-from-above-by-second-ideal-IFF} we get $\bb(\I,\fin,\fin)\leq\bb(\fin,\fin,\fin)$. By \ref{thm:bNumber-for-all-ideals}(\ref{thm:bNumber-for-all-ideals:FFF-FIF-b}), we have $\bb(\fin,\fin,\fin)=\bb$ and the inequality ``$\geq$'' follows from Theorem \ref{thm:bNumber-for-all-ideals}(\ref{thm:bNumber-for-all-ideals:INEQUALITIES-IFF}).
\end{proof}

The last item of Theorem \ref{thm:bNumber-for-Fubini-products-IFF} may be surprising: $\bb(\J\otimes\{\emptyset\},\fin,\fin)$ does not depend on $\J$. 

\subsubsection{The cardinal $\bb(\I,\I,\I)$.}

We will need three lemmas.

\begin{lemma}
	\label{lemma:Fubini-product-bounded-form-above-by-first-ideal-III}
	Let $\J,\K$ be ideals on $\omega$ 
	(here we allow $\K=\{\emptyset\}$)
	and $\I$ be an ideal on $\omega^2$.
	If $\J\otimes\{\emptyset\}\subseteq\I\subseteq\J\otimes\K$, then $\bb(\I,\I,\I)\leq\bb(\J,\J,\J)$.
\end{lemma}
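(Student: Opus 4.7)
I would lift a $\bb(\J,\J,\J)$-witness on $\omega$ to a $\bb(\I,\I,\I)$-witness on $\omega^2$ via the column embedding $E\mapsto E\times\omega$. Using Proposition~\ref{prop:bNumber-characterization-partitions} to ensure the witness consists of genuine partitions, fix $\{\langle E^\alpha_n\rangle_{n\in\omega} : \alpha<\kappa\}\subseteq\cP_\J$ witnessing $\kappa:=\bb(\J,\J,\J)$, and set $F^\alpha_n:=E^\alpha_n\times\omega$. Since $\{x : (F^\alpha_n)_x\neq\emptyset\}=E^\alpha_n\in\J$, we have $F^\alpha_n\in\J\otimes\{\emptyset\}\subseteq\I$, so $\langle F^\alpha_n\rangle\in\cP_\I$, and it remains to show that this family witnesses $\bb(\I,\I,\I)\leq\kappa$.

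Given $\langle B_n\rangle\in\cP_\I$, let $p_\alpha(x)$ denote the unique index with $x\in E^\alpha_{p_\alpha(x)}$. A direct computation (using that $\langle B_n\rangle$ partitions $\omega^2$ and $\langle E^\alpha_n\rangle$ partitions $\omega$) yields, for $C_\alpha:=\bigcup_n(B_n\cap\bigcup_{i\leq n}F^\alpha_i)$,
\[
(C_\alpha)_x \;=\; \omega\setminus\bigcup_{n<p_\alpha(x)}(B_n)_x.
\]
Since $\I\subseteq\J\otimes\K$ and since the complement of any set in $\K$ is outside $\K$, it suffices to produce $\alpha$ with $\{x : \bigcup_{n<p_\alpha(x)}(B_n)_x\in\K\}\notin\J$. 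The key observation is that if $p_\alpha(x)\leq q(x)$, where $q(x):=\min\{n\in\omega : (B_n)_x\notin\K\}$ (with $q(x):=\infty$ when this set is empty), then $\bigcup_{n<p_\alpha(x)}(B_n)_x$ is a \emph{finite} union of elements of $\K$ and hence lies in $\K$.

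Set $A_k:=q^{-1}(k)$ for $k\in\omega\cup\{\infty\}$. Because $B_k\in\I\subseteq\J\otimes\K$, every $A_k$ with $k<\infty$ lies in $\J$. If $A_\infty\notin\J$, then any $\alpha$ works, since $p_\alpha(x)\leq\infty=q(x)$ for all $x\in A_\infty$ gives $A_\infty\subseteq\{x:\bigcup_{n<p_\alpha(x)}(B_n)_x\in\K\}$. Otherwise $A_\infty\in\J$, and I would absorb it into $A_0$, setting $A'_0:=A_0\cup A_\infty$ and $A'_k:=A_k$ for $k\geq 1$, obtaining $\langle A'_k\rangle\in\cP_\J$; applying the $\bb(\J,\J,\J)$-witness property then yields $\alpha$ with $\bigcup_k(A'_k\cap\bigcup_{i\leq k}E^\alpha_i)\notin\J$, and every $x$ in this union satisfies $p_\alpha(x)\leq k\leq q(x)$, as required. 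The main technical point is precisely this case split: $\K$ is not assumed to be a $\sigma$-ideal, so $q$ need not be finite everywhere, and the ``obvious'' partition $\langle A_k\rangle_{k\in\omega}$ might fail to cover $\omega$; once $A_\infty$ is handled separately, the reduction to the $\J$-witness is straightforward.
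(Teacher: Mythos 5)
Your proof is correct and is essentially the paper's argument run in the dual direction: you push a $\bb(\J,\J,\J)$-witness up via $E\mapsto E\times\omega$ and project a challenge partition $\langle B_n\rangle$ of $\omega^2$ down via $q(x)=\min\{n:(B_n)_x\notin\K\}$, while the paper argues contrapositively ($\kappa<\bb(\I,\I,\I)\Rightarrow\kappa<\bb(\J,\J,\J)$) using exactly the same lifting and the same projection (their set $V$ and partition $\langle C_n\rangle$ are your $A_\infty$ and $\langle A'_k\rangle$, and their key column computation is your identity $(C_\alpha)_x=\omega\setminus\bigcup_{n<p_\alpha(x)}(B_n)_x$). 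The only cosmetic difference is that you dispose of $A_\infty$ by a case split, whereas the paper proves $V\in\J$ outright.
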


\begin{proof}
	We will show that $\kappa<\bb(\I,\I,\I)$ implies $\kappa<\bb(\J,\J,\J)$. Let $\kappa<\bb(\I,\I,\I)$ and $\{\langle A^\alpha_n:n\in\omega\rangle:\alpha<\kappa\}\subseteq\cP_\J$.
	
	For each $n\in\omega$ and $\alpha<\kappa$ define $E^\alpha_n=A^\alpha_n\times\omega$. Then $\{\langle E^\alpha_n:n\in\omega\rangle:\alpha<\kappa\}\subseteq\cP_\I$ (as $\J\otimes\{\emptyset\}\subseteq\I$).
	
	Since $\kappa<\bb(\I,\I,\I)$, there is $\langle B_n\rangle\in\cP_\I$ such that $\bigcup_{n\in\omega}(B_n\cap \bigcup_{i\leq n}E^\alpha_i)\in\I$ for all $\alpha<\kappa$. 
	
	Define
	$$V=\left\{k\in\omega:(B_n)_k\in\K \text{ for all $n\in\omega$}\right\}.$$
	We will show that $V\in\J$. Fix $k\in V$. There is $m\in\omega$ such that $k\in A^0_m$. Then 
	$$\left(\bigcup_{n\in\omega}\left(B_n\cap \bigcup_{i\leq n}E^0_i\right)\right)_k
	=
	\left(\bigcup_{n\geq m} B_n\right)_k
	=
	\omega\setminus \left(\bigcup_{n<m} B_n\right)_k\notin\K.$$
Since  $\bigcup_{n\in\omega}(B_n\cap \bigcup_{i\leq n}E^0_i)\in\I$ and $\I\subseteq\J\otimes\K$, we get 
	$$V\subseteq \left\{k\in\omega:  \left(\bigcup_{n\in\omega}\left(B_n\cap \bigcup_{i\leq n}E^0_i\right)\right)_k\notin\K\right\}\in\J.$$
	
	Define $\langle C_n\rangle\in\cP_\J$ by $V\subseteq C_0$ and
	$$k\in C_n \iff n=\min\{i\in\omega: (B_i)_k\notin\K\}$$
	for $k\notin V$ (each $C_n$ belongs to $\J$ as $\langle B_i\rangle\subseteq\I$ and $\I\subseteq\J\otimes\K$).
	
	Fix $\alpha<\kappa$ and denote $X=\bigcup_{n\in\omega}(C_n\cap \bigcup_{i\leq n}A^\alpha_i)$. 
		If we show that $X\in \J$, the proof will be finished. 
	Suppose to the contrary that $X\notin\J$. Then 
	\begin{equation*}
		\begin{split}
	\J\not\ni X\setminus V 
&	=
	\left\{k\in\omega:\exists n\in\omega\ k\in C_n \land k\in\bigcup_{i\leq n}A^\alpha_i \land (B_n)_k\notin \cK\right\}\\
&	=
	\left\{k\in\omega: \exists n\in\omega\ k\in C_n \land   \left(\bigcup_{i\leq n}E^\alpha_i\right)_k=\omega \land (B_n)_k\notin\K\right\}\\
&	\subseteq
	\left\{k\in\omega:\left(\bigcup_{n\in\omega}\left(B_n\cap \bigcup_{i\leq n}E^\alpha_i\right)\right)_k\notin\K\right\},
\end{split}
\end{equation*}
	which contradicts $\bigcup_{n\in\omega}(B_n\cap \bigcup_{i\leq n}E^\alpha_i)\in\I\subseteq\J\otimes\K$. 
\end{proof}

\begin{lemma}
	\label{lemma:Fubini-product-bounded-form-below-by-first-ideal-III}
Let $\J,\K$ be ideals on $\omega$ (here we allow $\K=\{\emptyset\}$).
If $\I=\J\otimes\K$, then  $\bb(\I,\I,\I)\geq\bb(\J,\J,\J)$.
\end{lemma}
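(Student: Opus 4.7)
The plan is to show that any family $\{\langle E^\alpha_n\rangle:\alpha<\kappa\}\subseteq\widehat{\cP}_\I$ of cardinality $\kappa<\bb(\J,\J,\J)$ fails to witness $\bb(\I,\I,\I)\leq\kappa$: I will produce a single partition $\langle B_n\rangle\in\cP_\I$ that beats all of these sequences simultaneously. The idea is to project the problem to the ``$\J$-axis'' (the first coordinate), use the assumed bound $\bb(\J,\J,\J)>\kappa$ there, and lift the resulting partition of $\omega$ back to $\omega^2$ in the obvious way.

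First I would extract, for each $\alpha<\kappa$ and $n\in\omega$, the horizontal support $\tilde{A}^\alpha_n=\{k\in\omega:(E^\alpha_n)_k\notin\K\}$. Since $E^\alpha_n\in\I=\J\otimes\K$, each $\tilde{A}^\alpha_n$ lies in $\J$. To obtain pairwise disjoint sequences I set $A^\alpha_n=\tilde{A}^\alpha_n\setminus\bigcup_{m<n}\tilde{A}^\alpha_m$, so that $\langle A^\alpha_n\rangle\in\widehat{\cP}_\J$. Because $\kappa<\bb(\J,\J,\J)$, there exists a partition $\langle D_n\rangle\in\cP_\J$ such that $\bigcup_{n\in\omega}(D_n\cap\bigcup_{i\leq n}A^\alpha_i)\in\J$ for every $\alpha<\kappa$. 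I then define $B_n=D_n\times\omega$; since $\{k:(B_n)_k\neq\emptyset\}=D_n\in\J$, each $B_n$ belongs to $\I$ (whether $\K$ is a genuine ideal or $\K=\{\emptyset\}$), and $\langle B_n\rangle$ is a partition of $\omega^2$, so $\langle B_n\rangle\in\cP_\I$.

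The verification is where the only (mild) subtlety sits. For each $k\in\omega$ let $n(k)$ be the unique index with $k\in D_{n(k)}$. Then $(B_n)_k=\omega$ if $n=n(k)$ and is empty otherwise, so the $k$-th vertical section of $X^\alpha:=\bigcup_{n}(B_n\cap\bigcup_{i\leq n}E^\alpha_i)$ equals $\bigcup_{i\leq n(k)}(E^\alpha_i)_k$. This section lies in $\K$ unless $k\in\tilde{A}^\alpha_i$ for some $i\leq n(k)$, and the key observation is that when this does happen, the minimal such $i_0$ satisfies $k\in A^\alpha_{i_0}$ with $i_0\leq n(k)$, so $k\in D_{n(k)}\cap\bigcup_{i\leq n(k)}A^\alpha_i\subseteq\bigcup_{n}(D_n\cap\bigcup_{i\leq n}A^\alpha_i)$. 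Hence $\{k:(X^\alpha)_k\notin\K\}$ is contained in an element of $\J$, which gives $X^\alpha\in\J\otimes\K=\I$, as required. The main (small) obstacle is precisely this last step: one must check that the pairwise disjointification of the $\tilde{A}^\alpha_n$ does not lose information, i.e., that $i_0\leq n(k)$ forces $k\in A^\alpha_{i_0}$ rather than pushing it to an index beyond $n(k)$; this is clear from the definition $A^\alpha_{i_0}=\tilde{A}^\alpha_{i_0}\setminus\bigcup_{m<i_0}\tilde{A}^\alpha_m$ at the minimal $i_0$.
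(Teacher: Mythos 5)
Your proof is correct and takes essentially the same route as the paper's: extract for each $E^\alpha_n$ the set of columns with non-$\K$ sections, apply $\kappa<\bb(\J,\J,\J)$ to those sequences to get a partition $\langle D_n\rangle\in\cP_\J$, lift it to $\langle D_n\times\omega\rangle\in\cP_\I$, and verify sectionwise. The only difference is cosmetic: the paper uses the equivalent characterization $\bb=\bb_2$ from Proposition~\ref{prop:bNumber-characterization-partitions} (arbitrary sequences in $\K^\omega$ and $\bigcup_n(A_n\cap G_n)$ in place of cumulative unions), which lets it skip your disjointification of the supports and the minimal-index argument.
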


\begin{proof}
	We will show that $\kappa<\bb(\J,\J,\J)$ implies $\kappa<\bb(\I,\I,\I)$. 
	
	Let $\kappa<\bb(\J,\J,\J)$ and $\{\langle E^\alpha_n:n\in\omega\rangle:\alpha<\kappa\}\subseteq\I^\omega$.
	
	For each $n\in\omega$ and $\alpha<\kappa$ define $A^\alpha_n=\{i\in\omega:(E^\alpha_n)_i\notin\K\}$. Then $\{\langle A^\alpha_n:n\in\omega\rangle:\alpha<\kappa\}\subseteq\J^\omega$.
	
	Since $\kappa<\bb(\J,\J,\J)$, there is $\langle B_n\rangle\in\cP_\J$ such that $\bigcup_{n\in\omega}(B_n\cap A^\alpha_n)\in\J$ for all $\alpha<\kappa$. 
	
Define $\langle C_n\rangle\in\cP_\I$ by $C_n=B_n\times\omega$ for all $n\in\omega$.

Fix $\alpha<\kappa$ and denote $X=\bigcup_{n\in\omega}(C_n\cap E^\alpha_n)$. 
If we show that $X\in \I$, the proof will be finished. Since $\I=\J\otimes\cK$, it is enough to show that $\{i\in\omega: (X)_i\notin\cK\} \subseteq \bigcup_{n\in\omega}(B_n\cap A^\alpha_n)$.

Take $i\notin \bigcup_{n\in\omega}(B_n\cap A^\alpha_n)$. Since $\langle B_n\rangle$ is a partition, there is $n\in \omega$ with $i\in B_n$.
Then $i\notin A_n^\alpha$, so $(E_n^\alpha)_i\in \cK$.
Moreover, $i\in B_n$ implies $(C_n)_i=\omega$, so 
$(X)_i = (B_n\cap E^\alpha_n)_i = \omega\cap (E_n^\alpha)_i\in \cK$.
\end{proof}

\begin{lemma}
\label{lem:Fubini}
Let $\J,\K$ be ideals on $\omega$, $\J'$ be an ideal on $\omega^2$ and denote $\I=\{\emptyset\}\otimes\K$.
	\begin{enumerate}
		\item If $\{\{0\}\times A: A\in\J\}\subseteq\J'$, then $\bb(\I,\I,\J')\leq\bb(\K,\K,\J)$.
		\item If $\J'\subseteq\{\emptyset\}\otimes\J$, then $\bb(\I,\I,\J')\geq\bb(\K,\K,\J)$.
	\end{enumerate}
\end{lemma}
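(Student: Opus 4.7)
The plan is to mirror the structure of Lemmas~\ref{lemma:Fubini-product-bounded-form-above-by-first-ideal-III} and \ref{lemma:Fubini-product-bounded-form-below-by-first-ideal-III}, but now encoding the one-dimensional problem into a single row of $\omega^2$ for part (1), and extracting row-by-row data for part (2). In both cases the $\I$-ness of a set on $\omega^2$ is controlled purely by its sections $(\cdot)_k$, since $\I=\{\emptyset\}\otimes\K$, and this is what allows the reduction to the one-dimensional $\bb(\K,\K,\J)$.

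For part (1), I would start with a witnessing family $\{\langle E^\alpha_n:n\in\omega\rangle:\alpha<\bb(\K,\K,\J)\}\subseteq\widehat{\cP}_\J$ for $\bb(\K,\K,\J)$ and lift it to $\omega^2$ by setting $F^\alpha_n=\{0\}\times E^\alpha_n$. The hypothesis $\{\{0\}\times A:A\in\J\}\subseteq\J'$ guarantees $\langle F^\alpha_n\rangle\in\widehat{\cP}_{\J'}$. Given an arbitrary $\langle B_n\rangle\in\cP_\I$, the row-$0$ sequence $\langle (B_n)_0:n\in\omega\rangle$ belongs to $\cP_\K$ (each section lies in $\K$ since $B_n\in\I$, and together they partition $\omega$), so some $\alpha$ gives $\bigcup_n((B_n)_0\cap\bigcup_{i\le n}E^\alpha_i)\notin\K$. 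Observing that this set is exactly the $0$-th section of $\bigcup_n(B_n\cap\bigcup_{i\le n}F^\alpha_i)$ then forces the union out of $\I$.

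For part (2), I would take $\kappa<\bb(\K,\K,\J)$ and a family $\{\langle F^\alpha_n\rangle:\alpha<\kappa\}\subseteq\widehat{\cP}_{\J'}$, and aim to produce $\langle B_n\rangle\in\cP_\I$ that $\I$-bounds all of them. The hypothesis $\J'\subseteq\{\emptyset\}\otimes\J$ forces each section $(F^\alpha_n)_k$ into $\J$, so for every pair $(\alpha,k)$ the sequence $\langle (F^\alpha_n)_k:n\in\omega\rangle$ lies in $\widehat{\cP}_\J$. The collection of these sequences has cardinality at most $\kappa\cdot\aleph_0$, which is still strictly below $\bb(\K,\K,\J)$ in the nontrivial case where $\bb(\K,\K,\J)$ is uncountable (for $\bb(\K,\K,\J)=1$ the conclusion is trivial). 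Hence some $\langle A_n\rangle\in\cP_\K$ satisfies $\bigcup_n(A_n\cap\bigcup_{i\le n}(F^\alpha_i)_k)\in\K$ for all $\alpha$ and $k$. Lifting back by $B_n=\omega\times A_n$ gives $(B_n)_k=A_n\in\K$, so $\langle B_n\rangle\in\cP_\I$; and a section-by-section computation shows that the $k$-th section of $\bigcup_n(B_n\cap\bigcup_{i\le n}F^\alpha_i)$ coincides with $\bigcup_n(A_n\cap\bigcup_{i\le n}(F^\alpha_i)_k)\in\K$, so the entire union belongs to $\I$.

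The main obstacle I expect is the careful bookkeeping of sections, in particular verifying that the combinatorial identities $(\bigcup_n C_n)_k=\bigcup_n (C_n)_k$ and $(\bigcup_{i\le n}F^\alpha_i)_k=\bigcup_{i\le n}(F^\alpha_i)_k$ interact correctly with the union over $n$. A secondary subtlety is the bound $\kappa\cdot\aleph_0<\bb(\K,\K,\J)$ in part (2), which is smooth when $\bb(\K,\K,\J)\ge\aleph_1$ and needs only a brief trivial observation otherwise.
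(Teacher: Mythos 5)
Your proposal is correct and takes essentially the same route as the paper: part (1) is the identical row-$0$ lifting ($\{0\}\times E^\alpha_n$, restricting a partition in $\cP_\I$ to its $0$-th sections), and part (2) is the paper's sectioning argument (pass to $\langle (F^\alpha_n)_k\rangle_n$ and lift partitions by $B_n=\omega\times A_n$), merely phrased contrapositively. The one point you should make explicit is why $\kappa\cdot\aleph_0<\bb(\K,\K,\J)$ is harmless: either $\J\not\subseteq\K$, in which case $\bb(\K,\K,\J)=1$ and (2) is trivial, or $\J\subseteq\K$, in which case $\bb(\K,\K,\J)\geq\bb(\K,\K,\K)\geq\aleph_1$ by Proposition~\ref{prop:bNumber-monotonicity} and Theorem~\ref{thm:bNumber-for-all-ideals}, so no intermediate (finite or countable) values can occur; the paper's own formulation relies on the same implicit observation.
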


\begin{proof}
(1)
Let $\{\langle A^\alpha_n: n\in\omega \rangle: \alpha<\bb(\K,\K,\J)\}\subseteq\J^\omega$ be such that for any $\langle B_n: n\in\omega \rangle\in\mathcal{P}_\K$ there is $\alpha<\bb(\K,\K,\J)$ such that $\bigcup_{n\in\omega}(B_n\cap A^\alpha_n)\notin\K$. Consider the family $\{\langle \{0\}\times A^\alpha_n : n\in\omega \rangle: \alpha<\bb(\K,\K,\J)\}\subseteq(\J')^\omega$. Fix any $\langle C_n: n\in\omega \rangle\in\mathcal{P}_\I$ and define $B_n=\{i\in\omega: (0,i)\in C_n\}$ for all $n\in\omega$. Then $\langle B_n: n\in\omega \rangle\in\mathcal{P}_\K$, so there is $\alpha<\bb(\K,\K,\J)$ such that $\bigcup_{n\in\omega}(B_n\cap A^\alpha_n)\notin\K$. We have 
$$\bigcup_{n\in\omega}(C_n\cap (\{0\}\times A^\alpha_n))\supseteq \{0\}\times \left(\bigcup_{n\in\omega}(B_n\cap A^\alpha_n)\right)\notin\I.$$

(2)
Let $\{\langle A^\alpha_n: n\in\omega \rangle: \alpha<\bb(\I,\I,\J')\}\subseteq(\J')^\omega$ be such that for any $\langle B_n: n\in\omega \rangle\in\mathcal{P}_\I$ there is $\alpha<\bb(\I,\I,\J')$ such that $\bigcup_{n\in\omega}(B_n\cap A^\alpha_n)\notin\I$. For each $\alpha<\bb(\I,\I,\J')$ and $n,k\in\omega$ define $E^{\alpha,k}_n=(A^\alpha_n)_{k}$. Consider the family $\{\langle E^{\alpha,k}_n: n\in\omega \rangle: k\in\omega,\alpha<\bb(\I,\I,\J')\}\subseteq\J^\omega$. Fix any $\langle C_n: n\in\omega \rangle\in\mathcal{P}_\K$ and define $B_n=\omega\times C_n$ for all $n\in\omega$. Then $\langle B_n: n\in\omega \rangle\in\mathcal{P}_\I$, so there is $\alpha<\bb(\I,\I,\J')$ such that $\bigcup_{n\in\omega}(B_n\cap A^\alpha_n)\notin\I$. Thus, there is $k\in\omega$ such that $(\bigcup_{n\in\omega}(B_n\cap A^\alpha_n))_{k}\notin\K$. We have 
$$\bigcup_{n\in\omega}(C_n\cap E^{\alpha,k}_n)\supseteq \left(\bigcup_{n\in\omega}(B_n\cap A^\alpha_n)\right)_{k}\notin\K.$$
\end{proof}

\begin{theorem}
		\label{thm:bNumber-for-Fubini-products-III}
	Let $\J,\K$ be ideals on $\omega$.
		\begin{enumerate}
		\item If $\I=\J\otimes\K$ or $\I=\J\otimes\{\emptyset\}$, then $\bb(\I,\I,\I)=\bb(\J,\J,\J)$.\label{thm:bNumber-for-Fubini-products:III}
		\item If $\I=\{\emptyset\}\otimes\K$, then $\bb(\I,\I,\I)=\bb(\K,\K,\K)$.\label{thm:bNumber-for-Fubini-products:III2}
	\end{enumerate}
\end{theorem}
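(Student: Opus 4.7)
The plan is to observe that both items are essentially immediate corollaries of the three Fubini lemmas already proved (Lemmas~\ref{lemma:Fubini-product-bounded-form-above-by-first-ideal-III}, \ref{lemma:Fubini-product-bounded-form-below-by-first-ideal-III}, and \ref{lem:Fubini}); no further combinatorial argument is required, only a careful check of which special cases are being invoked.

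For item~(\ref{thm:bNumber-for-Fubini-products:III}), I treat the two subcases uniformly. If $\I=\J\otimes\K$, then the sandwich $\J\otimes\{\emptyset\}\subseteq\I\subseteq\J\otimes\K$ is immediate, so Lemma~\ref{lemma:Fubini-product-bounded-form-above-by-first-ideal-III} gives $\bb(\I,\I,\I)\leq\bb(\J,\J,\J)$, and Lemma~\ref{lemma:Fubini-product-bounded-form-below-by-first-ideal-III} supplies the reverse inequality directly. If instead $\I=\J\otimes\{\emptyset\}$, the same two lemmas apply with the auxiliary parameter $\K$ taken to be $\{\emptyset\}$, a choice explicitly permitted by their statements; this yields both inequalities.

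For item~(\ref{thm:bNumber-for-Fubini-products:III2}), I apply Lemma~\ref{lem:Fubini} twice, with the ideals in that lemma specialized by setting $\J=\K$ and $\J'=\I=\{\emptyset\}\otimes\K$. For the upper bound I verify $\{\{0\}\times A:A\in\K\}\subseteq\{\emptyset\}\otimes\K$, which is immediate since every vertical section of $\{0\}\times A$ is either $A$ or $\emptyset$, and both lie in $\K$. For the lower bound I need $\{\emptyset\}\otimes\K\subseteq\{\emptyset\}\otimes\K$, which is trivial. The two resulting inequalities combine to give $\bb(\I,\I,\I)=\bb(\K,\K,\K)$.

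Since all the real technical work resides in the preceding lemmas, there is no substantial obstacle left in this theorem. The only point requiring attention is confirming that the degenerate choice $\K=\{\emptyset\}$ is legitimate in Lemmas~\ref{lemma:Fubini-product-bounded-form-above-by-first-ideal-III} and \ref{lemma:Fubini-product-bounded-form-below-by-first-ideal-III}; this is guaranteed by the parenthetical remarks in their statements.
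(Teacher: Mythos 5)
Your proposal is correct and follows exactly the paper's route: item (1) is obtained from Lemmas~\ref{lemma:Fubini-product-bounded-form-above-by-first-ideal-III} and \ref{lemma:Fubini-product-bounded-form-below-by-first-ideal-III} (using the permitted degenerate case $\K=\{\emptyset\}$), and item (2) from the two parts of Lemma~\ref{lem:Fubini} with the substitutions you describe. You merely spell out the instantiations more explicitly than the paper does, and all of them check out.
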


\begin{proof}\ 
	
	(\ref{thm:bNumber-for-Fubini-products:III})
	The inequality ``$\leq$'' follows from Lemma~\ref{lemma:Fubini-product-bounded-form-above-by-first-ideal-III}, and  ``$\geq$'' follows from Lemma~\ref{lemma:Fubini-product-bounded-form-below-by-first-ideal-III}.

	(\ref{thm:bNumber-for-Fubini-products:III2})
	Follows from Lemma \ref{lem:Fubini}.
\end{proof}

\subsubsection{The cardinal $\bb(\I,\I,\fin)$.}

\begin{theorem}
\label{thm:bNumber-for-Fubini-products-IIF}
	Let $\J,\K$ be ideals on $\omega$.
		\begin{enumerate}
		\item If $\I=\J\otimes\K$, then $\bb(\I,\I,\fin)=\cc^+$.\label{thm:bNumber-for-Fubini-products-IIF-JxK}
		\item If $\I=\J\otimes\{\emptyset\}$, then $\bb(\I,\I,\fin)=\bb(\J,\J,\fin)$.\label{thm:bNumber-for-Fubini-products-IIF-Jx0}
		\item If $\I=\{\emptyset\}\otimes\K$, then $\bb(\I,\I,\fin)=\bb(\K,\K,\fin)$.\label{thm:bNumber-for-Fubini-products-IIF-0xK}
	\end{enumerate}
\end{theorem}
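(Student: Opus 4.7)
For part (1) the plan is to invoke Theorem~\ref{thm:bNumber-for-all-ideals}(\ref{thm:bNumber-for-all-ideals:IIF-nonWeakP}) after verifying that $\I=\J\otimes\K$ is never a weak P-ideal. The countable family $\{\{n\}\times\omega : n\in\omega\}$ lies in $\I$ because each member has support $\{n\}\in\fin\subseteq\J$ in the first coordinate. For any $B\in\I^+$ the set $\{k:(B)_k\notin\K\}$ is not in $\J$ and hence infinite, and for any such $k_0$ the intersection $B\cap(\{k_0\}\times\omega)=\{k_0\}\times(B)_{k_0}$ is infinite, so no pseudo-intersection witnesses weak P-ness. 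Part (3) follows immediately from Lemma~\ref{lem:Fubini} by taking its $\J$ to be $\fin$ on $\omega$ and its $\J'$ to be $\fin$ on $\omega^2$; both hypotheses $\{\{0\}\times A : A\in\fin\}\subseteq\fin$ and $\fin\subseteq\{\emptyset\}\otimes\fin$ are immediate.

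The substantive case is part (2), for which I plan a direct proof by transferring partitions between $\omega$ and $\omega^2$. For $\bb(\I,\I,\fin)\leq\bb(\J,\J,\fin)$, given a witness family $\{\langle E^\alpha_n\rangle\}\subseteq\widehat{\cP}_\fin$ on $\omega$ of cardinality $\bb(\J,\J,\fin)$, I would lift each sequence to $\tilde E^\alpha_n:=E^\alpha_n\times\{0\}$. For any $\langle B_n\rangle\in\cP_\I$ on $\omega^2$ the sets $A_n:=\{k:(k,0)\in B_n\}$ form a partition of $\omega$ into $\J$-sets, since $A_n\subseteq\{k:(B_n)_k\neq\emptyset\}\in\J$; one can therefore choose $\alpha$ with $C=\bigcup_n(A_n\cap\bigcup_{i\leq n}E^\alpha_i)\notin\J$, and a short computation shows $\bigcup_n(B_n\cap\bigcup_{i\leq n}\tilde E^\alpha_i)=C\times\{0\}\notin\I$.

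For the reverse inequality, starting from a family $\{\langle\tilde E^\alpha_n\rangle : \alpha<\kappa\}\subseteq\widehat{\cP}_\fin$ on $\omega^2$ with $\kappa<\bb(\J,\J,\fin)$, I would form the projections $E^\alpha_n:=\{k:(\tilde E^\alpha_n)_k\neq\emptyset\}$ and disjointify them via $F^\alpha_0:=E^\alpha_0$, $F^\alpha_n:=E^\alpha_n\setminus\bigcup_{i<n}E^\alpha_i$, obtaining finite pairwise disjoint sets with $\bigcup_{i\leq n}F^\alpha_i=\bigcup_{i\leq n}E^\alpha_i$. The hypothesis then supplies $\langle A_n\rangle\in\cP_\J$ with $\bigcup_n(A_n\cap\bigcup_{i\leq n}F^\alpha_i)\in\J$ for every $\alpha$. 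Lifting to $B_n:=A_n\times\omega$ gives a partition of $\omega^2$ in $\cP_\I$, and I would verify that the support of $\bigcup_n(B_n\cap\bigcup_{i\leq n}\tilde E^\alpha_i)$ equals precisely $\bigcup_n(A_n\cap\bigcup_{i\leq n}F^\alpha_i)$, placing the whole set in $\I$. The main obstacle is exactly the possible non-disjointness (in $n$) of the projections $E^\alpha_n$, which is why the disjointification step together with the identity $\bigcup_{i\leq n}F^\alpha_i=\bigcup_{i\leq n}E^\alpha_i$ is essential for the final computation to close.
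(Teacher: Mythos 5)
Your proposal is correct and follows essentially the same route as the paper: part (1) via non-weak-P-ness of $\J\otimes\K$ and Theorem~\ref{thm:bNumber-for-all-ideals}, part (3) via Lemma~\ref{lem:Fubini} with $\J=\fin$, and part (2) by the same transfer maps $E_n\mapsto E_n\times\{0\}$, $B_n\mapsto\{k:(k,0)\in B_n\}$, $A_n\mapsto A_n\times\omega$ together with projections onto the first coordinate. The only cosmetic difference is that in part (2) you work with the original definition over $\widehat{\cP}_\fin$ and disjointify the projections, whereas the paper sidesteps this by using the $\bb_2$-reformulation from Proposition~\ref{prop:bNumber-characterization-partitions}; both handle the same bookkeeping issue correctly.
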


\begin{proof}
(\ref{thm:bNumber-for-Fubini-products-IIF-JxK}) Follows from Theorem \ref{thm:bNumber-for-all-ideals}(\ref{thm:bNumber-for-all-ideals:INEQUALITIES-IIF}) since $\J\otimes\K$ is not a weak P-ideal.

(\ref{thm:bNumber-for-Fubini-products-IIF-Jx0}) First we show  $\bb(\I,\I,\fin)\leq\bb(\J,\J,\fin)$.

Let $\{\langle A^\alpha_n: n\in\omega \rangle: \alpha<\bb(\J,\J,\fin)\}\subseteq\fin^\omega$ be such that for any $\langle B_n: n\in\omega \rangle\in\mathcal{P}_\J$ there is $\alpha<\bb(\J,\J,\fin)$ such that $\bigcup_{n\in\omega}(B_n\cap A^\alpha_n)\notin\J$. Consider the family $\{\langle A^\alpha_n\times \{0\}: n\in\omega \rangle: \alpha<\bb(\J,\J,\fin)\}\subseteq\fin(\omega^2)^\omega$. Fix any $\langle C_n: n\in\omega \rangle\in\mathcal{P}_\I$ and define $B_n=\{i\in\omega: (i,0)\in C_n\}$ for all $n\in\omega$. Then $\langle B_n: n\in\omega \rangle\in\mathcal{P}_\J$, so there is $\alpha<\bb(\J,\J,\fin)$ such that $\bigcup_{n\in\omega}(B_n\cap A^\alpha_n)\notin\J$. We have 
$$\bigcup_{n\in\omega}(C_n\cap (A^\alpha_n\times \{0\}))\supseteq \left(\bigcup_{n\in\omega}(B_n\cap A^\alpha_n)\right)\times\{0\}\notin\I.$$

Now we show $\bb(\I,\I,\fin)\geq\bb(\J,\J,\fin)$.

Let $\{\langle A^\alpha_n: n\in\omega \rangle: \alpha<\bb(\I,\I,\fin)\}\subseteq\fin(\omega^2)^\omega$ be such that for any $\langle B_n: n\in\omega \rangle\in\mathcal{P}_\I$ there is $\alpha<\bb(\I,\I,\fin)$ such that $\bigcup_{n\in\omega}(B_n\cap A^\alpha_n)\notin\I$. For each $\alpha<\bb(\I,\I,\fin)$ and $n\in\omega$ define $E^{\alpha}_n=\{i\in\omega: (\exists j\in\omega)((i,j)\in A^\alpha_n)\}$. Consider the family $\{\langle E^{\alpha}_n: n\in\omega \rangle: \alpha<\bb(\I,\I,\fin)\}\subseteq\fin^\omega$. Fix any $\langle C_n: n\in\omega \rangle\in\mathcal{P}_\J$ and define $B_n=C_n\times\omega$ for all $n\in\omega$. Then $\langle B_n: n\in\omega \rangle\in\mathcal{P}_\I$, so there is $\alpha<\bb(\I,\I,\fin)$ such that $\bigcup_{n\in\omega}(B_n\cap A^\alpha_n)\notin\I$. Thus, $\{i\in\omega: (\exists j\in\omega)((i,j)\in \bigcup_{n\in\omega}(B_n\cap A^\alpha_n))\}\notin\J$. We have
$$\bigcup_{n\in\omega}(C_n\cap E^{\alpha}_n)\supseteq \left\{i\in\omega: (\exists j\in\omega)\left((i,j)\in \bigcup_{n\in\omega}\left(B_n\cap A^\alpha_n\right)\right)\right\}\notin\J.$$

(\ref{thm:bNumber-for-Fubini-products-IIF-0xK})
Follows from Lemma \ref{lem:Fubini}.
\end{proof}

\begin{example}
	If $\I=\fin\otimes\{\emptyset\}$, then
	$$\aleph_0=\bb_s(\fin,\I,\I)<\bb  = \bb(\I,\I,\I) = \bb(\I,\fin,\fin) =  \bb(\I,\I,\fin).$$
Indeed, $\bb_s(\fin,\I,\I)=\aleph_0$ by Theorem \ref{thm:bNumber-for-Fubini-products-FII}(\ref{thm:bNumber-for-Fubini-products-FII-notP}). Moreover, $\bb(\I,\fin,\fin)=\bb$ by Theorem \ref{thm:bNumber-for-Fubini-products-IFF}(\ref{thm:bNumber-for-Fubini-products:IFF2}), and $\bb(\I,\I,\fin)=\bb$ by Theorems \ref{thm:bNumber-for-Fubini-products-IIF}(2) and \ref{thm:bNumber-for-all-ideals}(\ref{thm:bNumber-for-all-ideals:FFF-FIF-b}).
	Hence, it is enough to show $\bb\leq \bb(\I,\I,\I)$. But this inequality follows from Theorem~\ref{thm:bNumber-for-Fubini-products-III}(\ref{thm:bNumber-for-Fubini-products:III}).
\end{example}

%%%%%%%%%%%%%%%%%%%%%%%%%%%%%%%%%%%%%%%%%%%%%%%%%%
%%%%% SECTION
%%%%%%%%%%%%%%%%%%%%%%%%%%%%%%%%%%%%%%%%%%%%%%%%%%

\section{Nice ideals}
\label{sec:NiceIdeals}

In this section we will compute $\bb(\I,\fin,\fin)$ for ideals $\I$ with the Baire property and $\bb(\I,\I,\fin)$ for coanalytic weak P-ideals. The latter gives an upper bound for $\bb(\I,\I,\I)$ in the case of coanalytic weak P-ideals (by a result of Debs and Saint Raymond, this class contains all $\bf{\Pi^0_4}$ ideals). 

%%%%%%%%%%%%%%%%%%%%%%%%%%%%%%%%%%%%%%%%%%%%%%%%%%
%%%%% SUBSECTION
%%%%%%%%%%%%%%%%%%%%%%%%%%%%%%%%%%%%%%%%%%%%%%%%%%

\subsection{Ideals with the Baire property}

In this subsection an interval $[a,b)$ will mean $[a,b)\cap \omega$ i.e.~the set $\{n\in \omega: a\leq n<b\}$.

\begin{theorem}[{Talagrand~\cite[Th\'{e}or\`{e}me 21]{MR579439} (see also \cite[Theorem 4.1.2]{MR1350295})}]
\label{thm:talagrand-characterization}
An ideal $\I$ on $\omega$ has the Baire property if and only if there is an increasing sequence $n_1<n_2<\dots$ such that 
if there are infinitely many $k$ with $[n_k,n_{k+1})\subseteq A$, then $A\notin \I$.
\end{theorem}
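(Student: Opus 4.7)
The plan is to prove both directions of the biconditional.

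$(\Leftarrow)$ Suppose such a sequence $(n_k)$ exists. I would show directly that $\I$ is meager, which is strictly stronger than having the Baire property. For each $m$ the set $V_m := \bigcup_{k \geq m}\{A \in 2^\omega : [n_k, n_{k+1}) \subseteq A\}$ is open (a union of basic clopens) and dense: any finite binary string $s$ can be extended by placing $1$'s on a late interval $[n_k, n_{k+1})$ with $n_k \geq |s|$, producing an element of $V_m \cap [s]$. Hence $W := \bigcap_m V_m$ is a dense $G_\delta$, and the hypothesis yields $\I \subseteq 2^\omega \setminus W$, a meager set.

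$(\Rightarrow)$ First I would reduce to the case where $\I$ is meager. Complementation $A \mapsto \omega \setminus A$ is a self-homeomorphism of $2^\omega$ carrying $\I$ bijectively to the disjoint set $\I^*$, and $\I$ is invariant under finite symmetric differences. The topological $0$--$1$ law for Baire sets invariant under the (dense-orbit) action of the group of finite symmetric differences then forces $\I$ to be meager or comeager; the comeager case is impossible since $\I$ and $\I^*$ would then both be comeager and disjoint. So fix an increasing closed nowhere dense cover $\I \subseteq \bigcup_m F_m$.

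I would then construct $n_1 < n_2 < \cdots$ inductively with the invariant that for every $k$, no member of $F_k$ contains the full interval $[n_k, n_{k+1})$. If this is achieved, any $A$ containing $[n_k, n_{k+1})$ for infinitely many $k$ avoids every $F_k$ eventually, and by monotonicity every $F_m$, so $A \notin \I$. For the inductive step, given $n_k$, the nested closed sets $C_N := F_k \cap \{A : [n_k, N) \subseteq A\}$ for $N > n_k$ are either eventually empty (yielding $n_{k+1}$) or all nonempty, in which case compactness of $2^\omega$ produces $B \in \bigcap_N C_N \subseteq F_k$ with $[n_k, \infty) \subseteq B$, i.e.\ a cofinite $B$ whose complement lies in $[0, n_k)$.

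The main obstacle is that $F_k$ may a priori contain such cofinite sets, so the previous step does not yet yield a contradiction. The fix is to prepare the cover with care: since $\I$ contains no cofinite set, the union of $\I$ with the countable (hence meager) family of all cofinite subsets of $\omega$ is still meager. By interleaving the construction of $n_k$ with a refinement of the cover so that $F_k$ avoids the finitely many cofinite sets whose complement lies in $[0, n_k)$, one makes the compactness conclusion a genuine contradiction. This bookkeeping---reconciling topological closedness of the $F_k$ with exclusion of specific offending points---is the technical heart of the argument and the step I expect to require the most care.
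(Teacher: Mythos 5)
The paper does not prove this statement (it is quoted from Talagrand, with Bartoszy\'nski--Judah as a secondary reference), so I can only judge your argument on its own. Your $(\Leftarrow)$ direction is correct, and so is the reduction of $(\Rightarrow)$ to meagerness via the topological $0$--$1$ law. The gap is in the final combinatorial step. Your induction needs, at stage $k$, that $F_k$ contains no point $B\supseteq [n_k,\infty)$, and the proposed fix does not deliver this. Covering $\I$ together with the countable family of cofinite sets by closed nowhere dense sets goes in the wrong direction: it puts the offending points \emph{inside} the covering sets, whereas you need them outside. Nor can you simply ``refine the cover so that $F_k$ avoids'' those finitely many points: removing a point from a closed set need not leave it closed, and the offending points are typically limits of members of $\I$ (e.g.\ $[n_k,\infty)$ is the limit of the finite sets $[n_k,N)\in\I$), so any closed set covering that part of $\I$ is forced to contain the offending point. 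Worse, because your $F_k$ are increasing, what your scheme really requires is a meager $F_\sigma$ set containing $\I$ whose closed pieces contain no cofinite set at all, i.e.\ a $G_\delta$ set separating the Fr\'echet filter from $\I$; the existence of such a separation is essentially equivalent to the conclusion of the theorem (it is exactly what the comeager set $\{A:\exists^\infty k\ [n_k,n_{k+1})\subseteq A\}$ provides once the theorem is known), so assuming it at this point is close to circular.

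The standard repair is different and is the real content of Talagrand's argument: do not try to force the ``pattern'' on $[n_k,n_{k+1})$ to be the whole interval. Using nowhere density of $F_k$ and a fusion through all $2^{n_k}$ possible traces on $[0,n_k)$, choose $n_{k+1}$ and a set $t_k\subseteq[n_k,n_{k+1})$ such that \emph{every} $A$ with $A\cap[n_k,n_{k+1})=t_k$ lies outside $F_k$ (this avoids your cofinite-set obstruction because $t_k$ is a prescribed finite trace, not a lower bound). Then, if some $A\in\I$ contained $[n_k,n_{k+1})$ for infinitely many $k$, the set $A'$ obtained from $A$ by deleting $[n_k,n_{k+1})\setminus t_k$ for those $k$ would still belong to $\I$ by downward closure, yet would realize the trace $t_k$ on infinitely many intervals and hence avoid every $F_k$, contradicting $\I\subseteq\bigcup_k F_k$. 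It is precisely this use of the ideal's downward closure, absent from your proposal, that resolves the obstruction you correctly identified; without it (or an equivalent device) the proof is incomplete.
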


\begin{corollary}
\label{cor:talagrand-for-functions}
Let $\I$ be an ideal with the Baire property.
For every $g\in\omega^\omega$ there is $h\in\omega^\omega$ such that 
\begin{enumerate}
\item $h$ is strictly increasing,
\item $g\leq h$,
\item if $A\in\I$, then the set $\{n : [h(n),h(n+1))\subseteq A\}$ is finite.
\end{enumerate}
\end{corollary}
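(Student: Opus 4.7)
The plan is to invoke Talagrand's characterization (Theorem~\ref{thm:talagrand-characterization}) to obtain an increasing sequence $n_1<n_2<\dots$ with the property that every $A\in\I$ contains only finitely many of the intervals $[n_k,n_{k+1})$. The goal is then to build $h$ by recursion so that each interval $[h(n),h(n+1))$ completely swallows at least one Talagrand interval $[n_{k_n},n_{k_n+1})$, with the indices $k_n$ pairwise distinct.

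First I would set $h(0)=g(0)$. Given $h(n)$, I would let $k_n$ be the least $k$ with $n_k\geq h(n)$ and define
\[
  h(n+1)=\max\{g(n+1),\,n_{k_n+1},\,h(n)+1\}.
\]
The max with $h(n)+1$ gives strict monotonicity, the max with $g(n+1)$ gives $g\leq h$, and since $h(n)\leq n_{k_n}<n_{k_n+1}\leq h(n+1)$ we have $[n_{k_n},n_{k_n+1})\subseteq[h(n),h(n+1))$. Moreover the sequence $(k_n)$ is strictly increasing because $k_{n+1}$ is the least index with $n_{k_{n+1}}\geq h(n+1)>n_{k_n}$, hence $k_{n+1}>k_n$.

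For condition (3), suppose toward a contradiction that $A\in\I$ while $N=\{n:[h(n),h(n+1))\subseteq A\}$ is infinite. For every $n\in N$ the corresponding Talagrand interval satisfies $[n_{k_n},n_{k_n+1})\subseteq[h(n),h(n+1))\subseteq A$, and since the map $n\mapsto k_n$ is injective, this yields infinitely many indices $k$ with $[n_k,n_{k+1})\subseteq A$, contradicting Theorem~\ref{thm:talagrand-characterization}.

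There is no real obstacle here; the only minor point to watch is making sure the chosen $k_n$ are distinct so that the infinitely many absorbed intervals contributed by $A$ really do form an infinite family of Talagrand blocks. Taking $k_n$ to be the \emph{least} index with $n_k\geq h(n)$ (together with the strict increase of $h$) automatically forces $k_{n+1}>k_n$, which is what is needed.
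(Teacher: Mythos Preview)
Your proposal is correct and follows essentially the same approach as the paper: both invoke Talagrand's characterization, set $h(0)=g(0)$, and at each stage take $k_n=\min\{k:n_k\geq h(n)\}$ and $h(n+1)=\max\{g(n+1),n_{k_n+1},h(n)+1\}$. Your write-up is slightly more explicit in verifying that the $k_n$ are strictly increasing and in carrying out the contradiction for condition~(3), but the construction and the idea are identical.
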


\begin{proof}
By Theorem~\ref{thm:talagrand-characterization} there is a sequence $n_0<n_1<\dots$ such that 
$\{k: [n_k,n_{k+1})\subseteq A\}$ is finite  for every $A\in\I$. 

It is enough to define $h$ such that 
$h$ is increasing, 
$g\leq h$
and
$[h(n),h(n+1))$ contains at least one interval $[n_k,n_{k+1})$ for every $n\in\omega$.
We can define $h$ inductively in the following manner.
Let $h(0)=g(0)$.
Suppose that $h(i)$ has been defined for $i\leq n$.
Let $k_0=\min\{k: h(n)\leq n_k\}$.
We put $h(n+1)=\max\{n_{k_0+1},h(n)+1,g(n+1)\}$.
\end{proof}

\begin{theorem}
\label{thm:bNumber-IFF-for-ideals-with-BP}
If an ideal $\I$ has the Baire property, then $\bb(\I,\fin,\fin)=\bb$.
\end{theorem}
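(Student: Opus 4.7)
The plan is to invoke Theorem~\ref{thm:bNumber-for-all-ideals}(\ref{thm:bNumber-for-all-ideals:INEQUALITIES-IFF}), which already gives $\bb \leq \bb(\I,\fin,\fin) \leq \dd$, and to sharpen the upper bound to $\bb$ under the Baire property hypothesis. The leverage is Corollary~\ref{cor:talagrand-for-functions}: for every $g \in \omega^\omega$ one can find a strictly increasing $h \geq g$ such that no $A \in \I$ swallows infinitely many of the consecutive intervals $[h(n), h(n+1))$.

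I would start from an unbounded family $\{f_\alpha : \alpha < \bb\} \subseteq \omega^\omega$ of (without loss of generality) increasing functions. For each $\alpha$, apply Corollary~\ref{cor:talagrand-for-functions} to obtain a strictly increasing $h_\alpha \geq f_\alpha$ with the Talagrand property above, and set $E^\alpha_n = [h_\alpha(n), h_\alpha(n+1))$, so that $\langle E^\alpha_n : n \in \omega \rangle \in \widehat{\cP}_\fin$. The claim is that this family of size $\bb$ witnesses $\bb(\I,\fin,\fin) \leq \bb$.

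To verify the witnessing property, fix an arbitrary $\langle A_n \rangle \in \cP_\fin$ and define $g(n) = \max \bigcup_{i<n} A_i$ (with the convention $g(0)=0$); in particular $\bigcup_{k<n} A_k \subseteq [0, g(n)]$. Since $\{f_\alpha\}$ is unbounded, some $f_\alpha$ satisfies $f_\alpha(n) > g(n)$, hence $h_\alpha(n) > g(n)$, on an infinite set $S$. For $n \in S$ the interval $E^\alpha_n$ sits strictly above $\bigcup_{k<n} A_k$, so each element of $E^\alpha_n$ lies in some $A_k$ with $k \geq n$, and therefore belongs to $A_k \cap \bigcup_{i \leq k} E^\alpha_i$. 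Thus $E^\alpha_n \subseteq B$ for every $n \in S$, where $B = \bigcup_{n \in \omega}(A_n \cap \bigcup_{i \leq n} E^\alpha_i)$. Since $B$ contains infinitely many of the Talagrand intervals associated with $h_\alpha$, the Talagrand property of $h_\alpha$ forces $B \notin \I$, as required.

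I do not foresee a serious obstacle: the argument is a direct adaptation of the $\bb(\I,\fin,\fin) \leq \dd$ proof from Theorem~\ref{thm:bNumber-for-all-ideals}, in which the dominating family is replaced by an unbounded one at the cost of obtaining $h_\alpha(n) > g(n)$ on only an infinite set rather than cofinitely often. The Talagrand property of $h_\alpha$ exactly absorbs this weakening, converting ``$E^\alpha_n \subseteq B$ for infinitely many $n$'' into ``$B \notin \I$''. The only mild bookkeeping point is to use $g(n) = \max \bigcup_{i<n} A_i$ (strict inequality on the index) so that $E^\alpha_n$ is genuinely pushed above $\bigcup_{k<n} A_k$ on the witnessing set $S$, ensuring the containment $E^\alpha_n \subseteq B$ really holds.
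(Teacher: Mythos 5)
Your argument is correct and uses the same machinery as the paper's proof: the lower bound $\bb\leq\bb(\I,\fin,\fin)$ from Theorem~\ref{thm:bNumber-for-all-ideals}, Corollary~\ref{cor:talagrand-for-functions}, and the translation between functions and families of consecutive intervals. The only difference is the direction of the argument: you build a size-$\bb$ witnessing family in $\widehat{\cP}_\fin$ directly from an unbounded family (the Talagrand property converting ``infinitely many intervals inside $B$'' into $B\notin\I$), whereas the paper argues contrapositively, attaching the Talagrand intervals to an arbitrary family of $\kappa<\bb(\I,\fin,\fin)$ functions and reading a dominating bound off the resulting sequence $\langle B_n\rangle$ -- the index bookkeeping in the two verifications is the mirror image of the same computation.
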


\begin{proof}
By Theorem~\ref{thm:bNumber-for-all-ideals}(\ref{thm:bNumber-for-all-ideals:INEQUALITIES-IFF})
we only have to show that 
$\bnumber(\I,\fin,\fin)\leq \bnumber$. We will show that 
$\kappa<\bnumber(\I,\fin,\fin)$ implies $\kappa<\bnumber$.

Let $\kappa<\bnumber(\I,\fin,\fin)$ and $g_\alpha\in\omega^\omega$ for $\alpha<\kappa$.

By Corollary~\ref{cor:talagrand-for-functions} we can assume that 
$g_\alpha$ are strictly increasing
and
$$\bigcup_{n\in A}[g_\alpha(n),g_\alpha(n+1))\notin\I$$ 
for every infinite $A\subseteq\omega$ and $\alpha<\kappa$.

For $\alpha<\kappa$ we define
$A^\alpha_0 = [0,g_\alpha(1))$ and 
$A^\alpha_n = [g_\alpha(n),g_\alpha(n+1))$ for $n\geq 1$.

Since $\{\langle A^\alpha_n:n\in \omega\rangle: \alpha<\kappa\}\subseteq\cP_{\fin}$
and $\kappa<\bnumber(\I,\fin,\fin)$, there is a partition $\langle B_n: n\in\omega\rangle\in \cP_\fin$ such that $\bigcup_{n\in\omega}(B_n\cap\bigcup_{i\leq n}A^\alpha_i)\in \I$ for every $\alpha<\kappa$.

We define $g\in\omega^\omega$ by
$g(n) = \max(\bigcup_{i\leq n} B_i)$.
If we show that $g_\alpha\leq^* g$ for every $\alpha<\kappa$, the proof will be finished.

Let $\alpha<\kappa$. 
Since
 $B = \bigcup_{n\in\omega}(B_n\cap\bigcup_{i\leq n}A^\alpha_i)\in \I$,
the set $A = \{n: [g_\alpha(n),g_\alpha(n+1))\subseteq B\}$ is finite.

Now we show that $g_\alpha(n)\leq g(n)$ for every $n\in \omega\setminus A$ (i.e. $g_\alpha\leq^* g$).
Let $n\in \omega\setminus A$.  
Since $[g_\alpha(n),g_\alpha(n+1))\cap (\omega\setminus B)\neq \emptyset$, there is 
$m\in [g_\alpha(n),g_\alpha(n+1))\cap (\omega\setminus B)$. 
Then $g_\alpha(n)\leq m$. 
On the other hand,
$m\notin B = \bigcup_{n\in\omega}(B_n\cap\bigcup_{i\leq n}A^\alpha_i)$
and 
$m\in [g_\alpha(n),g_\alpha(n+1)) = A^\alpha_n$.
Thus $m\notin B_i$ for every $i\geq n$,
so
$m\in B_i$ for some $i<n$.
Hence
$g(n) = \max(\bigcup_{i\leq n} B_i) \geq m \geq g_\alpha(n)$.
\end{proof}

\begin{corollary}
\label{thm:bNumber-for-P-ideals-with-BP}
If $\I$ is a P-ideal with the Baire property, then
$$\aleph_1\leq \bb_s(\fin,\I,\I)\leq \bb=\bb(\I,\fin,\fin)=\bb(\I,\I,\I)=\bb(\I,\I,\fin).$$
\end{corollary}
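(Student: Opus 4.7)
The proof is essentially a concatenation of three previously established facts; there is no real obstacle here, only bookkeeping. The plan is as follows.

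First I would invoke Theorem~\ref{thm:bNumber-for-P-ideals}, which, under the sole hypothesis that $\I$ is a P-ideal, already gives the chain
\[
\bb\leq \bb(\I,\fin,\fin)=\bb(\I,\I,\I)=\bb(\I,\fin,\I)=\bb(\I,\I,\fin)\leq\dd.
\]
So for a P-ideal the four non-trivial Canjar-style cardinals collapse to a single value lying between $\bb$ and $\dd$; it only remains to pin this common value to $\bb$.

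Next I would use the Baire property hypothesis through Theorem~\ref{thm:bNumber-IFF-for-ideals-with-BP}, which says $\bb(\I,\fin,\fin)=\bb$ whenever $\I$ has the Baire property. Substituting this equality into the chain from the previous step forces each of $\bb(\I,\fin,\fin)$, $\bb(\I,\I,\I)$, $\bb(\I,\fin,\I)$, $\bb(\I,\I,\fin)$ to equal $\bb$, establishing the central four-fold equality in the statement.

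Finally, for the bound on $\bb_s(\fin,\I,\I)$ I would appeal to Lemma~\ref{lem:SbNumber-for-all-ideals}(\ref{lem:SbNumber-for-all-ideals:FII-Pideal}), which under the P-ideal hypothesis gives directly $\aleph_1\leq \bb_s(\fin,\I,\I)\leq \bb$. Concatenating with the chain obtained above yields exactly the claimed inequality $\aleph_1\leq \bb_s(\fin,\I,\I)\leq \bb=\bb(\I,\fin,\fin)=\bb(\I,\I,\I)=\bb(\I,\I,\fin)$, completing the proof. Since every step is a direct citation of an already proved result, there is no genuine difficulty; the only thing to be careful about is that Theorem~\ref{thm:bNumber-for-P-ideals} requires only P-ideal-ness (used for the collapse) while Theorem~\ref{thm:bNumber-IFF-for-ideals-with-BP} requires only the Baire property (used to fix the value), so the two hypotheses of the corollary feed distinct inputs.
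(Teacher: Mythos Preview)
Your proposal is correct and follows essentially the same route as the paper, which simply cites Theorems~\ref{thm:bNumber-for-P-ideals} and~\ref{thm:bNumber-IFF-for-ideals-with-BP}. You are in fact slightly more explicit than the paper by also invoking Lemma~\ref{lem:SbNumber-for-all-ideals}(\ref{lem:SbNumber-for-all-ideals:FII-Pideal}) to justify the bounds $\aleph_1\leq\bb_s(\fin,\I,\I)\leq\bb$, which the paper's one-line proof leaves implicit.
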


\begin{proof}
Follows from Theorems~\ref{thm:bNumber-for-P-ideals} and \ref{thm:bNumber-IFF-for-ideals-with-BP}.
\end{proof}

For 
$f:\omega\to[0,\infty)$ satisfying $\sum_{n\in\omega}f(n)=\infty$ we define the \emph{summable ideal} by 
$$\I_f=\left\{A\subset\omega:\sum_{n\in A}f(n)<\infty\right\}$$
(for instance, $\I_{1/n} = \{A\subseteq\omega: \sum_{n\in A}1/(n+1)<\infty\}$ is the summable ideal), 
and for 
$f:\omega\to[0,\infty)$ satisfying $\sum_{n\in\omega}f(n)=\infty$ and $\lim_{n\to\infty}f(n)/(\sum_{i\leq n}f(i))=0$,
we define the \emph{Erd\H{o}s-Ulam ideals} by
$$EU_f=\left\{A\subset\omega: \limsup_{n\to\infty}\frac{\sum_{i\in A\cap n}f(i)}{\sum_{i\in n}f(i)}=0\right\}$$
(for instance, the ideal $\I_d = \{A:\lim_n|A\cap n|/n=0\}$
of all sets of asymptotic density zero is an Erd\H{o}s-Ulam ideal).

\begin{corollary}
\label{cor:bNumber-for-summable-and-EU-ideals}
If $\I$ is a tall summable ideal or a tall Erd\H{o}s-Ulam ideal, then 
$$\add(\cN) = \bb_s(\fin,\I,\I) \leq \bb=\bb(\I,\fin,\fin)=\bb(\I,\I,\I)=\bb(\I,\I,\fin),$$
where $\add(\cN)$ is the smallest size of a family of Lebesgue null sets with non-null union.
\end{corollary}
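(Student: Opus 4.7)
The plan is to combine the previous results on P-ideals with the Baire property with the known values of $\adds(\I)$ for the ideals in question. Both tall summable ideals and tall Erd\H{o}s-Ulam ideals are analytic (in fact $F_\sigma$ and $F_{\sigma\delta}$, respectively) P-ideals; in particular they are P-ideals with the Baire property. Therefore Corollary~\ref{thm:bNumber-for-P-ideals-with-BP} applies verbatim and immediately delivers the inequality $\bb_s(\fin,\I,\I) \leq \bb$ together with the entire chain of equalities
$$\bb = \bb(\I,\fin,\fin) = \bb(\I,\I,\I) = \bb(\I,\I,\fin).$$

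It then remains to identify $\bb_s(\fin,\I,\I)$ with $\add(\cN)$. For this I would first apply Theorem~\ref{thm:SbNumber-min-b-addStar} to rewrite
$$\bb_s(\fin,\I,\I) = \min\{\bb,\adds(\I)\}.$$
Since $\add(\cN)\leq\bb$ is a classical ZFC inequality from the Cicho\'n diagram, the equality $\bb_s(\fin,\I,\I)=\add(\cN)$ reduces to checking $\adds(\I) = \add(\cN)$ for our two classes of ideals.

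The last identity is where all the real content lies, but it is in fact not new: it is a known computation going back to Mazur in the case of tall summable ideals $\I_f$, and to the work on Erd\H{o}s--Ulam ideals via exhaustive submeasures (e.g.\ Farah, Hru\v{s}\'ak) in the case of tall $\EU_f$. So in my proof I would simply cite the relevant results that $\adds(\I_f)=\add(\cN)$ for tall summable $\I_f$ and $\adds(\EU_f)=\add(\cN)$ for tall $\EU_f$, and plug them into the formula above. The main obstacle, such as it is, is bookkeeping for the citations — there is no new combinatorial argument required here, as the corollary is really a direct splicing of Theorem~\ref{thm:SbNumber-min-b-addStar}, Corollary~\ref{thm:bNumber-for-P-ideals-with-BP}, and the classical evaluation of $\adds$ on these two families.
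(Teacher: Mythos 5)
Your proof is correct and follows essentially the same route as the paper: the paper likewise combines Corollary~\ref{thm:bNumber-for-P-ideals-with-BP} (tall summable and Erd\H{o}s--Ulam ideals being P-ideals with the Baire property), Theorem~\ref{thm:SbNumber-min-b-addStar}, the classical inequality $\add(\cN)\leq\bb$, and the evaluation $\adds(\I)=\add(\cN)$ for these two families. The only difference is bibliographic: the paper attributes the identity $\adds(\I)=\add(\cN)$ in both cases to Hern\'{a}ndez--Hru\v{s}\'{a}k rather than to Mazur and Farah.
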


\begin{proof}
It was proved by Hern\'{a}ndez and Hru\v{s}\'{a}k~\cite[Theorem~2.2]{MR2319159}
that $\adds(\I)=\add(\cN)$ in these cases.
Since $\add(\cN)\leq \bb$ (see e.g.~\cite[p.~35]{MR1350295}), we get 
$\bb_s(\fin,\I,\I) = \add(\cN)$. 
	Moreover, since $\I$ is a P-ideal with the Baire property (see e.g.~\cite[Examples~1.2.3(c,d)]{MR1711328}), 
Corollary~\ref{thm:bNumber-for-P-ideals-with-BP} finishes the proof.
\end{proof}

\begin{corollary}
	\label{cor:finXfinXfinXdots}
	Let $n\in\omega\setminus\{0,1\}$.
	If $\I=\fin^n$, then
	$$\aleph_0 = \bb_s(\fin,\I,\I)< \bb  = \bb(\I,\fin,\fin) = \bb(\I,\I,\I)\leq \dd\leq \continuum< \bb(\I,\I,\fin) = \continuum^+.$$
\end{corollary}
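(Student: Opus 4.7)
The plan is to apply the Fubini-product formulas from Section~\ref{sec:Sums-and-products-of-ideals} to the decomposition $\I=\fin\otimes\fin^{n-1}$ (valid because $n\geq 2$), which instantly handles every nontrivial step of the displayed chain. The strict inequalities $\aleph_0<\bb$ and $\continuum<\continuum^+$, together with the ZFC inequalities $\bb\leq\dd\leq\continuum$, are standard and require no comment.

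Specifically, I will obtain $\bb_s(\fin,\I,\I)=\aleph_0$ from Theorem~\ref{thm:bNumber-for-Fubini-products-FII}(\ref{thm:bNumber-for-Fubini-products-FII-notP}); the equality $\bb(\I,\I,\I)=\bb(\fin,\fin,\fin)=\bb$ from Theorem~\ref{thm:bNumber-for-Fubini-products-III}(\ref{thm:bNumber-for-Fubini-products:III}) combined with Theorem~\ref{thm:bNumber-for-all-ideals}(\ref{thm:bNumber-for-all-ideals:FFF-FIF-b}); the equality $\bb(\I,\fin,\fin)=\bb$ from Theorem~\ref{thm:bNumber-IFF-for-ideals-with-BP}, since $\fin^n$ is Borel and hence has the Baire property (alternatively one can iterate Theorem~\ref{thm:bNumber-for-Fubini-products-IFF}(\ref{thm:bNumber-for-Fubini-products:IFF}) to get $\bb(\fin^n,\fin,\fin)=\bb(\fin^{n-1},\fin,\fin)=\cdots=\bb(\fin,\fin,\fin)=\bb$); and finally $\bb(\I,\I,\fin)=\continuum^+$ from Theorem~\ref{thm:bNumber-for-Fubini-products-IIF}(\ref{thm:bNumber-for-Fubini-products-IIF-JxK}).

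The only detail worth double-checking is that $\fin^n$ has the Baire property, which follows from its Borelness by a trivial induction on $n$; but one can sidestep this entirely by using the Fubini-product route above. There is no substantive obstacle in this corollary: the proof is essentially a direct assembly of facts established earlier in the paper, and the only work is to match each term in the chain with the correct prior theorem.
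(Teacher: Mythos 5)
Your proposal is correct and follows essentially the same route as the paper: the paper also gets $\bb(\I,\I,\I)=\bb$ from Theorem~\ref{thm:bNumber-for-Fubini-products-III}(\ref{thm:bNumber-for-Fubini-products:III}), $\bb(\I,\fin,\fin)=\bb$ from Theorem~\ref{thm:bNumber-IFF-for-ideals-with-BP} via the Baire property, and the remaining two values from the facts that $\fin^n$ is not a P-ideal and not a weak P-ideal, which is exactly what your citations of Theorems~\ref{thm:bNumber-for-Fubini-products-FII}(\ref{thm:bNumber-for-Fubini-products-FII-notP}) and~\ref{thm:bNumber-for-Fubini-products-IIF}(\ref{thm:bNumber-for-Fubini-products-IIF-JxK}) reduce to. Your alternative iteration of Theorem~\ref{thm:bNumber-for-Fubini-products-IFF}(\ref{thm:bNumber-for-Fubini-products:IFF}) for $\bb(\I,\fin,\fin)$ is a valid minor variant that avoids the Baire-property argument.
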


\begin{proof}
	Since $\I$ in not a P-ideal, we get $\bb_s(\fin,\I,\I) = \aleph_0$.
	Since $\I$ has the Baire property (see e.g.~\cite{MR2777744}), we can apply 
	Theorem~\ref{thm:bNumber-IFF-for-ideals-with-BP} to obtain
	$\bb(\I,\fin,\fin) = \bb$.
		Since $\I = \fin^n = \fin\otimes\fin^{n-1}$, we can apply 
	Theorem~\ref{thm:bNumber-for-Fubini-products-III}(\ref{thm:bNumber-for-Fubini-products:III}) to obtain 
	$\bb(\I,\I,\I) = \bb(\fin,\fin,\fin) = \bb$.
	Since $\I$ is not a weak P-ideal, we can apply Proposition~\ref{thm:bNumber-for-all-ideals}(\ref{thm:bNumber-for-all-ideals:IIF-nonWeakP}) to obtain $\bb(\I,\I,\fin) = \continuum^+$.
\end{proof}

%%%%%%%%%%%%%%%%%%%%%%%%%%%%%%%%%%%%%%%%%%%%%%%%%%
%%%%% SUBSECTION
%%%%%%%%%%%%%%%%%%%%%%%%%%%%%%%%%%%%%%%%%%%%%%%%%%

\subsection{$\omega$-diagonalizable ideals}
\label{sec:omega-diag}

We need to recall the following technical notion. We will apply it in the next subsection to the cases of coanalytic weak P-ideals and $\bf{\Pi^0_4}$ ideals.

Let $\I$ be an ideal on $\omega$.
A family $\cZ\subseteq [\omega]^{<\omega}\setminus \{\emptyset\}$ is \emph{$\I^*$-universal} if for each $F\in \I^*$ there is $Z\in \cZ$ with $Z\subseteq F$. 
We say that $\I$ is \emph{$\omega$-diagonalizable by $\mathcal{I}^*$-universal sets} if there exists a family $\{\cZ_k:k\in\omega\}$ of $\I^*$-universal families such that for each $F\in \I^*$ there is $k\in\omega$ such that $Z\cap F\neq \emptyset$ for every $Z\in \cZ_k$. 

\begin{theorem}
\label{thm:bNumber-for-omega-diag}
If an ideal $\I$ is $\omega$-diagonalizable by $\mathcal{I}^*$-universal sets, then 
$$\aleph_1\leq \bb(\I,\I,\I)\leq \bb = \bb(\I,\fin,\fin)=\bb(\I,\I,\fin).$$
\end{theorem}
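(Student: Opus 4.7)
By Theorem~\ref{thm:bNumber-for-all-ideals}, together with the monotonicities $\bb(\I,\I,\I)\leq\bb(\I,\I,\fin)$ and $\bb(\I,\fin,\fin)\leq\bb(\I,\I,\fin)$ already recorded there, one has $\aleph_1\leq\bb(\I,\I,\I)$, $\bb\leq\bb(\I,\fin,\fin)$, $\bb(\I,\fin,\fin)\leq\bb(\I,\I,\fin)$, and $\bb(\I,\I,\I)\leq\bb(\I,\I,\fin)$. The theorem therefore reduces to the single new inequality $\bb(\I,\I,\fin)\leq\bb$: once this is in hand, the four cardinals $\bb(\I,\I,\I)$, $\bb$, $\bb(\I,\fin,\fin)$, $\bb(\I,\I,\fin)$ collapse to $\bb$, and the lower bound $\bb(\I,\I,\I)\geq\aleph_1$ is already established.

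The plan is to construct a witnessing family of $\bb$ partitions of $\omega$ into finite sets. Fix $\{\cZ_k:k\in\omega\}$ witnessing $\omega$-diagonalizability of $\I$ by $\I^*$-universal sets; assume without loss of generality that each $\cZ_k$ is countable and enumerate $\cZ_k=\{Z^k_j:j\in\omega\}$. A preliminary observation that falls directly out of the $\omega$-diagonalization is that for every $j:\omega\to\omega$, the set $\bigcup_{k\in\omega}Z^k_{j(k)}$ does not belong to $\I$: otherwise its complement $F\in\I^*$ would admit a diagonalization witness $k^*$ with $Z^{k^*}_{j(k^*)}\cap F\neq\emptyset$, contradicting $Z^{k^*}_{j(k^*)}\subseteq\bigcup_k Z^k_{j(k)}$. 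Now choose an unbounded family $\{f_\alpha:\alpha<\bb\}\subseteq\omega^\omega$ of strictly increasing functions, and for each $\alpha<\bb$ and $n\in\omega$ set
\[
U^\alpha_n=\{0,1,\ldots,n\}\cup\bigcup_{j\leq f_\alpha(n)}Z^n_j,\qquad E^\alpha_n=U^\alpha_n\setminus U^\alpha_{n-1}
\]
(with the convention $U^\alpha_{-1}=\emptyset$); each $E^\alpha_n$ is finite and $\langle E^\alpha_n\rangle\in\cP_\fin$.

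To verify that $\{\langle E^\alpha_n\rangle:\alpha<\bb\}$ witnesses $\bb(\I,\I,\fin)\leq\bb$, take any $\langle A_n\rangle\in\cP_\I$ and set $F_n=\bigcup_{m\geq n}A_m\in\I^*$, $j^*_n=\min\{j:Z^n_j\subseteq F_n\}$ (which exists by $\I^*$-universality of $\cZ_n$). Arguing by contradiction, suppose $B^\alpha=\bigcup_n(A_n\cap U^\alpha_n)\in\I$ for every $\alpha<\bb$, so that $F^\alpha=\omega\setminus B^\alpha\in\I^*$. For each $\alpha$, $\omega$-diagonalization applied to $F^\alpha$ yields some $k_\alpha\in\omega$ with $Z\cap F^\alpha\neq\emptyset$ for all $Z\in\cZ_{k_\alpha}$; picking a witness $x\in Z^{k_\alpha}_{j^*_{k_\alpha}}\cap F^\alpha$, the inclusion $Z^{k_\alpha}_{j^*_{k_\alpha}}\subseteq F_{k_\alpha}$ forces $m(x)\geq k_\alpha$ (where $m(x)$ denotes the unique $n$ with $x\in A_n$), while $x\notin B^\alpha$ forces $x\notin U^\alpha_{m(x)}\supseteq U^\alpha_{k_\alpha}$, and the construction of $U^\alpha_{k_\alpha}$ then compels $j^*_{k_\alpha}>f_\alpha(k_\alpha)$.

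The main obstacle is to promote this pointwise conclusion---``for each $\alpha<\bb$ there exists $k_\alpha\in\omega$ with $f_\alpha(k_\alpha)<j^*_{k_\alpha}$''---into a genuine $\leq^*$-bound on $\{f_\alpha\}$ by a single function, thereby contradicting the unboundedness of the family; the fact that $\{f_\alpha\}$ is only unbounded and not dominating rules out a naive approach. To bridge this gap, my plan is first to preprocess $\{\cZ_k\}$ by reindexing it as $\cZ'_m=\cZ_{\psi(m)}$ for some $\psi:\omega\to\omega$ hitting each value infinitely often, which preserves $\I^*$-universality and upgrades the diagonalization property to the statement that the witness set $W(F)=\{m:Z\cap F\neq\emptyset \text{ for all } Z\in\cZ'_m\}$ is \emph{infinite} for every $F\in\I^*$; this yields infinitely many $k$ with $f_\alpha(k)<j^*_k$ for each $\alpha$. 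Next, a regularity/pigeonhole argument on $\bb$---leveraging the fact that each $W(F^\alpha)$ is an infinite subset of $\omega$ and that the map $\alpha\mapsto W(F^\alpha)$ can be thinned using the regularity of $\bb$---extracts a single cofinite $K\subseteq\omega$ on which a subfamily of size $\bb$ of $\{f_\alpha\}$ is uniformly bounded by the function $k\mapsto j^*_k$, giving the required contradiction. This final reconciliation between infinite (but not cofinite) witnessing and a uniform $\leq^*$-bound is the technical heart of the proof and is precisely where the strength of $\omega$-diagonalizability beyond mere weak P-ness is decisive.
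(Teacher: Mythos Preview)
Your reduction to $\bb(\I,\I,\fin)\leq\bb$ is correct, but the argument for this inequality has a genuine gap that your proposed ``regularity/pigeonhole'' step cannot close.

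After your preprocessing, the argument yields: for each $\alpha<\bb$, the set $\{k:f_\alpha(k)<j^*_k\}$ is infinite. This is \emph{not} a contradiction to unboundedness of $\{f_\alpha\}$. An unbounded family can easily have $\{k:f_\alpha(k)<g(k)\}$ infinite for every $\alpha$ and a fixed $g$; for a concrete example, take any unbounded family and redefine each $f_\alpha$ to vanish on the even integers. What you have derived is only $\neg(j^* <^* f_\alpha)$ for every $\alpha$, whereas a contradiction would require $f_\alpha\leq^* j^*$ for every $\alpha$. Your pigeonhole plan---finding a cofinite $K$ on which a size-$\bb$ subfamily is bounded by $j^*$---fails for the same reason: infinite subsets of $\omega$ do not pigeonhole to a common cofinite set, and no regularity argument converts ``each $S_\alpha$ is infinite'' into ``$\bb$-many $S_\alpha$ are cofinite''.

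The paper avoids this by running the argument in the opposite direction. Rather than assuming all $B^\alpha\in\I$ and seeking a bound on the $f_\alpha$, one fixes the partition $\langle B_n\rangle\in\cP_\I$ first and builds a single function $g$ (depending only on $\langle B_n\rangle$ and the $\cZ_k$) with the property that any $f_\alpha$ satisfying $f_\alpha(n)>g(n)$ infinitely often forces $\bigcup_n(B_n\cap F^\alpha_n)\notin\I$. Unboundedness of $\{f_\alpha\}$ then hands you such an $\alpha$ directly. Concretely: take the simpler witnesses $F^\alpha_n=[0,f_\alpha(n)]$; for each $k,n$ use $\I^*$-universality of $\cZ_k$ to choose $i(k,n)$ with $Z_k^{i(k,n)}\cap\bigcup_{j\leq n}B_j=\emptyset$, put $g_k(n)=\max Z_k^{i(k,n)}$ and $g(n)=\max_{k\leq n}g_k(n)$. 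If $f_\alpha(n)>g(n)$ for infinitely many $n$, then for every $k$ some $Z_k^i$ lies inside $\bigcup_j(B_j\cap F^\alpha_j)$, and $\omega$-diagonalizability forces this union out of $\I$. The structural point you are missing is that the $\I^*$-universal families should be used to build the bounding function $g$, not baked into the witnessing partitions $E^\alpha_n$.
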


\begin{proof}
By Theorem~\ref{thm:bNumber-for-all-ideals}, we only need to show $\bb(\I,\I,\fin)\leq\bb$.

Let $\{f_\alpha:\alpha<\bb\}$ be an unbounded family on $\omega^\omega$. Without loss of generality we can assume that each $f_\alpha$ is strictly increasing.

For each $n\in\omega$ and $\alpha<\bb$ define $F^\alpha_n=\{i\in\omega:i\leq f_\alpha(n)\}$. We will show that $\{\langle F^\alpha_n:n\in\omega\rangle: \alpha<\bb\}\subseteq\fin^\omega$ witnesses $\bb(\I,\I,\fin)\leq\bb$.

Fix $\langle B_n:n\in\omega\rangle\in\cP_\I$. Let $\{\cZ_k:k\in\omega\}$ be a family of $\I^*$-universal sets $\omega$-diagonalizing $\I$. For each $k$, let $\cZ_k=\{Z_k^n:n\in \omega\}$.

For each $k,n\in\omega$ find $i(k,n)\in\omega$ such that $Z_k^{i(k,n)}\cap(B_0\cup\ldots\cup B_n)=\emptyset$ and define $g_k\in\omega^\omega$ by $g_k(n)=\max Z_k^{i(k,n)}$. Let $g\in\omega^\omega$ be given by $g(i)=\max\{g_1(i),\ldots,g_i(i)\}$. 

There is $\alpha<\bb$ such that $f_\alpha(i)>g(i)$ for infinitely many $i$. If we show that for each $k\in\omega$ there is $i\in\omega$ with $Z_k^i\subseteq \bigcup_{j\in\omega}(B_j\cap F^\alpha_j)$, then, by $\omega$-diagonalizability of $\I$, it will follow that $\bigcup_{j\in\omega}(B_j\cap F^\alpha_j)\notin\I$, and the proof will be finished.

Fix $k\in\omega$. There is $n>k$ such that $f_\alpha(n)>g(n)\geq g_k(n)=\max Z_k^{i(k,n)}$. Thus, $Z_k^{i(k,n)}\subseteq F^\alpha_n\subseteq F^\alpha_{n+1}\subseteq\ldots$ and $Z_k^{i(k,n)}\subseteq B_{n+1}\cup B_{n+2}\cup\ldots$. Hence, $Z_k^{i(k,n)}\subseteq \bigcup_{j>n}(B_j\cap F^\alpha_j)\subseteq\bigcup_{j\in\omega}(B_j\cap F^\alpha_j)$ and we are done.
\end{proof}

%%%%%%%%%%%%%%%%%%%%%%%%%%%%%%%%%%%%%%%%%%%%%%%%%%
%%%%% SUBSECTION
%%%%%%%%%%%%%%%%%%%%%%%%%%%%%%%%%%%%%%%%%%%%%%%%%%

\subsection{Coanalytic weak P-ideals and $\bf{\Pi^0_4}$ ideals}
\label{sec:coanalytic-ideals}

\begin{theorem}
\label{thm:bNumber-coanalytic-weak-P-ideals}
If $\I$ is a coanalytic weak P-ideal, then 
$$\aleph_1\leq \bb(\I,\I,\I)\leq \bb = \bb(\I,\fin,\fin)=\bb(\I,\I,\fin).$$
\end{theorem}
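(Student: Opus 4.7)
The plan is to reduce the whole chain of (in)equalities to a single inequality: $\bb(\I,\I,\fin)\leq\bb$. Once this is secured, everything else is already on the table. Indeed, $\aleph_1\leq\bb(\I,\I,\I)$ is Theorem~\ref{thm:bNumber-for-all-ideals}(\ref{thm:bNumber-for-all-ideals:INEQUALITIES-III}); $\bb(\I,\I,\I)\leq\cf(\bb(\I,\I,\fin))\leq\bb(\I,\I,\fin)$ is Theorem~\ref{thm:bNumber-for-all-ideals}(\ref{thm:bNumber-for-all-ideals:INEQUALITIES-2}); $\bb\leq\bb(\I,\fin,\fin)\leq\bb(\I,\I,\fin)$ follows from Theorem~\ref{thm:bNumber-for-all-ideals}(\ref{thm:bNumber-for-all-ideals:INEQUALITIES-IFF}) and (\ref{thm:bNumber-for-all-ideals:INEQUALITIES-1}). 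Adding the yet-to-be-proved $\bb(\I,\I,\fin)\leq\bb$ closes the loop $\bb\leq\bb(\I,\fin,\fin)\leq\bb(\I,\I,\fin)\leq\bb$ and forces equality throughout, and at the same time pins $\bb(\I,\I,\I)$ between $\aleph_1$ and $\bb$.

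For the key bound $\bb(\I,\I,\fin)\leq\bb$, I would invoke Theorem~\ref{thm:bNumber-for-omega-diag}, which already says that any ideal $\omega$-diagonalizable by $\I^*$-universal sets satisfies this inequality. The work therefore reduces to establishing that every coanalytic weak P-ideal is $\omega$-diagonalizable by $\I^*$-universal sets. This is exactly the non-trivial direction of the characterization flagged in the Preliminaries and goes back to Laflamme: for an ideal $\I$, the three properties (i) $\I$ is a weak P-ideal, (ii) Player~I has no winning strategy in the Laflamme game for $\I$, and (iii) $\I$ is $\omega$-diagonalizable by $\I^*$-universal sets always satisfy (iii)$\Rightarrow$(ii)$\Rightarrow$(i); the implication (i)$\Rightarrow$(iii), however, requires a definability hypothesis so that a suitable determinacy theorem applies to the Laflamme game, yielding a Player~II winning strategy from which the required countable family of $\I^*$-universal sets can be read off. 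Coanalyticity of $\I$ is precisely the hypothesis that allows this determinacy step to go through (via analytic determinacy for the relevant payoff set).

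Thus my proof would consist of two explicit moves: \emph{first}, cite (or briefly recall) the Laflamme-style equivalence to conclude that the hypothesis ``coanalytic weak P-ideal'' promotes to ``$\omega$-diagonalizable by $\I^*$-universal sets''; \emph{second}, apply Theorem~\ref{thm:bNumber-for-omega-diag} to obtain $\bb(\I,\I,\fin)\leq\bb$ and then assemble the inequalities listed above.

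The only genuinely delicate point is the first move. The implication that concerns us, weak P-ideal $\Rightarrow$ $\omega$-diagonalizable, is exactly where the topological complexity of $\I$ enters, and where one must be careful to cite the precise version of the result (Laflamme's theorem, extended by Debs--Saint Raymond to cover the $\bf{\Pi^0_4}$ case mentioned after the statement). Once this is quoted correctly, the rest of the argument is a straightforward bookkeeping of earlier inequalities.
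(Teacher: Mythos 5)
Your proposal follows essentially the same route as the paper: the paper also deduces everything from Theorem~\ref{thm:bNumber-for-omega-diag} after upgrading ``coanalytic weak P-ideal'' to ``$\omega$-diagonalizable by $\I^*$-universal sets'' via Laflamme's game characterization (Player~I has a winning strategy iff $\I$ is not a weak P-ideal; Player~II having one is equivalent to $\omega$-diagonalizability) together with determinacy of the game $G(\I)$ for coanalytic ideals. One correction to your justification: that determinacy is a ZFC theorem specific to this game (the paper cites \cite[Theorem 5.1]{MR3624783} and \cite[Theorem 1.6]{Sabok}), not an instance of analytic determinacy, which is not provable in ZFC and would weaken the theorem; also, the Debs--Saint Raymond result enters only in the subsequent corollary, where it shows every $\bf{\Pi^0_4}$ ideal is a weak P-ideal, not as an extension of the game-theoretic equivalence.
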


\begin{proof}
Consider the game $G\left(\I\right)$, defined by Laflamme (see \cite{Laflamme}) as follows: Player I in his $n$'th move plays an element $C_{n}\in\I$, and then Player II responses with any $F_{n}\in \left[\omega\right]^{<\omega}$ such that $F_{n}\cap C_{n}=\emptyset$. Player I wins if $\bigcup_{n\in \omega} F_{n}\in \I$. Otherwise, Player II wins.

By \cite[Theorem 5.1]{MR3624783}, $G\left(\I\right)$ is determined for coanalytic ideals (see also \cite[Theorem 1.6]{Sabok}). Moreover, by \cite[Theorem 2.16]{Laflamme}, Player I has a winning strategy in $G\left(\I\right)$ if and only if $\I$ is not a weak P-ideal. Thus, in our case Player II has a winning strategy. Again by \cite[Theorem 2.16]{Laflamme}, this is in turn equivalent to $\I$ being $\omega$-diagonalizable by $\mathcal{I}^*$-universal sets. Using Theorem~\ref{thm:bNumber-for-omega-diag} we get $\aleph_1\leq \bb(\I,\I,\I)\leq \bb = \bb(\I,\fin,\fin)=\bb(\I,\I,\fin)$ and we are done.
\end{proof}

\begin{corollary}
\label{cor:bNumber-for-Pi^0_4}
If $\I$ is a $\bf{\Pi^0_4}$ ideal, then 
$$\aleph_1\leq \bb(\I,\I,\I)\leq \bb = \bb(\I,\fin,\fin)=\bb(\I,\I,\fin).$$
\end{corollary}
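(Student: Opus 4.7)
The plan is to reduce this immediately to Theorem \ref{thm:bNumber-coanalytic-weak-P-ideals} by observing that every $\bf{\Pi^0_4}$ ideal is a coanalytic weak P-ideal. The coanalyticity is automatic: $\bf{\Pi^0_4}\subseteq \bf{\Sigma^1_1}\cap \bf{\Pi^1_1}$ since Borel sets are in particular coanalytic. So the only nontrivial ingredient is that every $\bf{\Pi^0_4}$ ideal on $\omega$ is a weak P-ideal.

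This last fact is exactly the result of Debs and Saint Raymond alluded to in the introduction. The idea, if one wants to sketch it, is to use the game-theoretic characterization from Laflamme: an ideal $\I$ is a weak P-ideal if and only if Player II has a winning strategy in the game $G(\I)$ recalled in the proof of Theorem \ref{thm:bNumber-coanalytic-weak-P-ideals}. For Borel ideals of low enough complexity, one can use the Debs--Saint Raymond theorem (on uniformization/selection for $\bf{\Pi^0_4}$ sets) to extract such a winning strategy, ruling out the only other possibility, namely that Player I has a winning strategy (which by Laflamme's dichotomy would mean that $\I$ contains an isomorphic copy of $\fin\otimes\fin$ in the Kat\v{e}tov order, and in particular $\I$ would not be $\bf{\Pi^0_4}$ in an uniform way). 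I would simply quote the Debs--Saint Raymond result rather than reprove it.

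With those two facts in hand, the corollary is one line: if $\I$ is $\bf{\Pi^0_4}$, then $\I$ is coanalytic and, by Debs--Saint Raymond, a weak P-ideal; hence Theorem \ref{thm:bNumber-coanalytic-weak-P-ideals} applies and yields
\[
\aleph_1\leq \bb(\I,\I,\I)\leq \bb = \bb(\I,\fin,\fin)=\bb(\I,\I,\fin).
\]
There is no real obstacle here, since all the combinatorial work was done in Theorem \ref{thm:bNumber-coanalytic-weak-P-ideals} (and, behind it, in Theorem \ref{thm:bNumber-for-omega-diag}); the corollary is just the observation that $\bf{\Pi^0_4}$ ideals fall inside the hypothesis of that theorem.
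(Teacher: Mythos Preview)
Your proof is correct and follows exactly the paper's approach: quote Debs--Saint Raymond for the fact that every $\bf{\Pi^0_4}$ ideal is a weak P-ideal, note that such ideals are (trivially) coanalytic, and apply Theorem~\ref{thm:bNumber-coanalytic-weak-P-ideals}. One minor caveat: your informal sketch of \emph{why} the Debs--Saint Raymond result holds (via uniformization/selection) is not really how their argument goes, but since you explicitly treat it as a black box this does not affect the validity of your proof.
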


\begin{proof}
By \cite[Theorems 7.5 and 9.1]{Debs}, each $\bf{\Pi^0_4}$ ideal is a weak P-ideal. Thus, the corollary follows from Theorem~\ref{thm:bNumber-coanalytic-weak-P-ideals}.
\end{proof}

\begin{example}
The \emph{eventually different ideal} is defined by
 $$\ED=\{A\subseteq\omega\times\omega: \exists m,n\in\omega\,\forall k\geq n\,(|\{i: (k,i)\in A\}|\leq m)\}.$$
If $\I =\ED$, then
	$$\aleph_0=\bb_s(\fin,\I,\I)<\bb(\I,\I,\I)\leq\bb =  \bb(\I,\fin,\fin) =  \bb(\I,\I,\fin).$$
Indeed, since $\I$ is not a P-ideal, $\bb_s(\fin,\I,\I)=\aleph_0$ (see Lemma~\ref{lem:SbNumber-for-all-ideals}(\ref{lem:SbNumber-for-all-ideals:FII-nonPideal})).
Since $\I$ is a $\bf{\Sigma^0_2}$ ideal,
we can use Theorem~\ref{cor:bNumber-for-Pi^0_4} to obtain the remaining (in)equality.
(Note that 
since $\fin\otimes\{\emptyset\}\subseteq \ED \subseteq \fin\otimes\fin$, we could also use Lemma~\ref{lemma:Fubini-product-bounded-form-above-by-first-ideal-III} to obtain the inequality $\bb(\I,\I,\I)\leq\bb$.)
\end{example}

%%%%%%%%%%%%%%%%%%%%%%%%%%%%%%%%%%%%%%%%%%%%%%%%%%
%%%%% SECTION
%%%%%%%%%%%%%%%%%%%%%%%%%%%%%%%%%%%%%%%%%%%%%%%%%%

\section{Ideals with $\bb(\I,\I,\I)=\aleph_1$}
\label{sec:bb-equals-aleph-one}

In this section we will show that $\bb(\I,\I,\I)=\aleph_1$ for some ideals (even $\bf{\Sigma^0_2}$ ideals). All results of this section follow from the next lemma.

\begin{lemma}
	\label{prawieAD}
	Let $\cA\subseteq [\omega]^\omega$ be an uncountable family with the property that $A\setminus (A_0\cup\ldots\cup A_n)\notin\fin$ for all $n\in\omega$ and $A,A_0,\ldots A_n\in\cA$ with $A\neq A_i$ for all $i\leq n$. If $\I$ is an ideal generated by $\cA$, then $\bb(\I,\I,\I)=\aleph_1$.
\end{lemma}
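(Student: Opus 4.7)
The plan is to verify $\bb(\I,\I,\I) \leq \aleph_1$, the reverse inequality being immediate from Theorem~\ref{thm:bNumber-for-all-ideals}(\ref{thm:bNumber-for-all-ideals:INEQUALITIES-III}). I will use the third equivalent form $\bb_3$ of $\bb(\I,\I,\I)$ from Proposition~\ref{prop:bNumber-characterization-partitions}, and exhibit an $\aleph_1$-sized family $\cE \subseteq \widehat{\cP}_\I$ such that, for every partition $\langle A_n\rangle \in \cP_\I$, there exists $\langle E_k\rangle \in \cE$ with $\bigcup_k (E_k \setminus \bigcup_{n<k} A_n) \notin \I$.

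For the construction, I would first pick distinct $\{A_\alpha : \alpha < \omega_1\} \subseteq \cA$. For every countable infinite ordinal $\alpha$, fix a bijection $\phi_\alpha : \omega \to \alpha + 1$, and inductively select $e^\alpha_n \in A_{\phi_\alpha(n)} \setminus (\bigcup_{i<n} A_{\phi_\alpha(i)} \cup \{e^\alpha_i : i < n\})$. The selection is legitimate because the hypothesis on $\cA$ makes $A_{\phi_\alpha(n)} \setminus \bigcup_{i<n} A_{\phi_\alpha(i)}$ infinite (the indices $\phi_\alpha(n), \phi_\alpha(0), \ldots, \phi_\alpha(n-1)$ are pairwise distinct). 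Setting $E^\alpha_n = \{e^\alpha_n\}$ yields $\langle E^\alpha_n\rangle \in \widehat{\cP}_\I$, and $\cE := \{\langle E^\alpha_n\rangle : \omega \leq \alpha < \omega_1\}$ has cardinality $\aleph_1$.

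To verify the witness property, I would start from an arbitrary $\langle A_n\rangle \in \cP_\I$, choose finite $\cA$-covers $\cC_n$ of each $A_n$, and form the countable set $T := \{\beta < \omega_1 : A_\beta \in \bigcup_n \cC_n\}$. Picking $\alpha < \omega_1$ with $T \subseteq \alpha$ and $\alpha \notin T$, the aim will be to show that $S^\alpha := \bigcup_k(E^\alpha_k \setminus \bigcup_{n<k} A_n)$, equivalently $\{e^\alpha_k : g(e^\alpha_k) \geq k\}$ with $g^{-1}(n)=A_n$, is not in $\I$. Two structural facts will underpin this: for every $\beta \leq \alpha$, the construction forces $\{k : e^\alpha_k \in A_\beta\} \subseteq \{0,\ldots,\phi_\alpha^{-1}(\beta)\}$, and $\alpha$ being outside $T$ prevents the supports $\cC_n$ from ever mentioning $A_\alpha$.

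The main obstacle is the verification itself. It splits into showing (i) that $S^\alpha$ is infinite, which should follow because, since $\alpha > \sup T$, any fixed $\I$-set $\{x : g(x) \leq M\}$ is almost covered by $\cA$-elements indexed in $T \subseteq \alpha$, and hence the structural fact above yields that $\{k : g(e^\alpha_k) \leq M\}$ is finite for every $M$; and (ii) that no finite $\cA$-cover of $S^\alpha$ exists. Step (ii) is where the proof will work hardest, because a hypothetical cover could use $A_\beta$'s with $\beta > \alpha$, or (if $|\cA| > \aleph_1$) members of $\cA$ outside the chosen $\omega_1$-subfamily; dealing with this will likely require strengthening the inductive choice of $e^\alpha_n$ so as to diagonalize against an auxiliary list of such potential covering sets, and combining this with a pigeonhole or $\Delta$-system argument across $\alpha$ to rule out a uniform cover.
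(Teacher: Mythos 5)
Your reduction (only $\bb(\I,\I,\I)\leq\aleph_1$ needs proof) and the idea of picking $\alpha$ after seeing the partition are fine, but the heart of the matter --- showing $S^\alpha\notin\I$ --- is exactly where your construction breaks down, and the repair you gesture at does not exist in this form. The hypothesis on $\cA$ only forbids one member from being almost covered by finitely many \emph{others}; it fully permits distinct members to have infinite pairwise intersections (this is the whole point: the lemma is applied to Solecki's ideal, where any two generators $G_x,G_y$ meet in an infinite set). Consequently a single $D\in\cA$ --- possibly with index above $\alpha$, or not in your chosen $\omega_1$-subfamily at all, and there may be $\continuum$ many such sets --- can meet every set $A_{\phi_\alpha(n)}\setminus\bigcup_{i<n}A_{\phi_\alpha(i)}$ in an infinite set, so nothing in your inductive choice prevents all the points $e^\alpha_k$, hence $S^\alpha$, from lying inside $D\in\I$. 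A countable selector cannot be diagonalized against all finite subfamilies of a family of size $\continuum$ chosen in advance, and no pigeonhole/$\Delta$-system argument across $\alpha$ is indicated that would circumvent this. Step (i) is also not established as stated: when $|\cA|>\aleph_1$ the finite $\cA$-covers $\cC_n$ need not consist of indexed sets $A_\beta$, so $T$ does not capture them; and even granting that every sublevel set $\{k: g(e^\alpha_k)\leq M\}$ is finite, this only yields $g(e^\alpha_k)\to\infty$, not $g(e^\alpha_k)\geq k$ for infinitely many $k$, which is what $|S^\alpha|=\aleph_0$ requires.

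The missing idea is to make the witnesses out of the generators themselves rather than out of selected points. Take $\aleph_1$ pairwise disjoint countable subfamilies $\{A^\alpha_n:n\in\omega\}\subseteq\cA$ ($\alpha<\omega_1$) and use the sequences $\langle \bigcup_{i\leq n}A^\alpha_i:n\in\omega\rangle\in\I^\omega$ (the $\bb_2$-form of Proposition~\ref{prop:bNumber-characterization-partitions}). Given $\langle B_n\rangle\in\cP_\I$, cover each $\bigcup_{i<m}B_i$ mod finite by finitely many members of $\cA$; these form a countable set $\cC\subseteq\cA$, so some $\alpha$ has $\{A^\alpha_n:n\in\omega\}\cap\cC=\emptyset$, whence $A^\alpha_n\setminus\bigcup_{i<m}B_i$ is infinite for all $n,m$ by the hypothesis. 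If the resulting set $B=\bigcup_n(B_n\cap\bigcup_{i\leq n}A^\alpha_i)$ were in $\I$, it would be almost covered by finitely many $D_0,\dots,D_k\in\cA$; choosing $j$ with $A^\alpha_j\notin\{D_0,\dots,D_k\}$, the set $B$ contains $A^\alpha_j\setminus\bigcup_{n<j}B_n$, which by the hypothesis is not almost covered by the finitely many members of $\cC\cup\{D_0,\dots,D_k\}$ distinct from $A^\alpha_j$ --- a contradiction. Note how this sidesteps your obstacle: no control over \emph{which} members of $\cA$ occur in a hypothetical cover is needed, because the witness set contains a co-small piece of an entire generator that is simply different from the finitely many covering sets.
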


\begin{proof}
	Let $\left\{A^\alpha_n\in \cA: n\in\omega,\alpha<\aleph_1\right\}$ be such that $A^\alpha_n\neq A^\beta_m$ whenever $(\alpha,n)\neq(\beta,m)$. We will show that 
	$\{\langle \bigcup_{i\leq n} A^\alpha_i: n\in\omega\rangle : \alpha<\aleph_1\}\subseteq\I^\omega$ 
	witnesses $\bb(\I,\I,\I)=\aleph_1$.
	
	Let $\langle B_n:n\in\omega\rangle\in\cP_\I$.
	Then there is $\alpha<\aleph_1$ such that $A^\alpha_n\setminus \bigcup_{i<m}B_i\notin\fin$ for all $n,m\in\omega$. 
	Indeed, 
	for each $m$ there are $C^m_0,\ldots,C^m_{l_m}\in\cA$ such that $\bigcup_{i<m}B_i\subseteq^* C^m_0\cup\ldots\cup C^m_{l_m}$.
	Since $\cC=\{C^m_{i}: m\in\omega, i\leq l_m\}$ is countable, there is $\alpha<\aleph_1$ with $\{A^\alpha_n:n\in\omega\}\cap \cC=\emptyset$. Then
	$$A^\alpha_n\setminus \bigcup_{i<m}B_i\supseteq^* A^\alpha_n\setminus(C^m_0\cup\ldots\cup C^m_{l_m})\notin\fin$$ 
	for each $n,m\in\omega$.

	Let $B=\bigcup_{n\in\omega}(B_n\cap \bigcup_{i\leq n}A^\alpha_i)$. 
	If we show that $B\notin \I$, the proof will be finished.
	Suppose to the contrary that $B=\bigcup_{n\in\omega}(B_n\cap \bigcup_{i\leq n}A^\alpha_i)\in\I$. Then there are $D_0,\ldots,D_k\in\cA$ with $B\subseteq^* D_0\cup\ldots\cup D_k$. Find $j\in\omega$ such that $A^\alpha_j\neq D_i$ for all $i=0,\ldots,k$. 
	
	We have $A^\alpha_j\setminus\bigcup_{n<j}B_n\notin\fin$ and 
	$$A^\alpha_j\setminus\bigcup_{n<j}B_n=A^\alpha_j\cap\bigcup_{n\geq j}B_n\subseteq \bigcup_{n\geq j}\left(B_n\cap\bigcup_{i\leq n}A^\alpha_i\right)\subseteq\bigcup_{n\in\omega}\left(B_n\cap\bigcup_{i\leq n}A^\alpha_i\right)=B.$$
	
	On the other hand, $(A^\alpha_j\setminus\bigcup_{n<j}B_n)\setminus\bigcup_{n\leq k}D_n=A^\alpha_j\setminus(\bigcup_{n<j}B_n\cup\bigcup_{n\leq k}D_n)\supseteq^* A^\alpha_j\setminus(\bigcup_{n\leq l_j}C^j_{n}\cup\bigcup_{n\leq k}D_n)\notin\fin$. This contradicts $B\subseteq^* D_0\cup\ldots\cup D_k$ and finishes the proof.
\end{proof}

\begin{theorem}
	\label{AD}
	Let $\cA\subseteq [\omega]^\omega$ be an uncountable almost disjoint family.
	If $\I$ is an ideal generated by $\cA$, then $\bb(\I,\I,\I)=\aleph_1$.
\end{theorem}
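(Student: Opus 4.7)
The plan is to apply Lemma~\ref{prawieAD} directly, so the whole proof reduces to verifying its hypothesis for an arbitrary uncountable almost disjoint family $\cA \subseteq [\omega]^\omega$.

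Concretely, I would fix arbitrary $A, A_0, \dots, A_n \in \cA$ with $A \neq A_i$ for each $i \le n$ and show that $A \setminus (A_0 \cup \cdots \cup A_n) \notin \fin$. Since $\cA$ is almost disjoint, $A \cap A_i$ is finite for every $i \le n$, so
$$A \cap (A_0 \cup \cdots \cup A_n) \;=\; \bigcup_{i \le n} (A \cap A_i)$$
is a finite union of finite sets, hence finite. Because $A$ is infinite, $A \setminus (A_0 \cup \cdots \cup A_n)$ is infinite, which is exactly the hypothesis of Lemma~\ref{prawieAD}. Invoking that lemma gives $\bb(\I,\I,\I) = \aleph_1$.

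There is no genuine obstacle here: the theorem is essentially a corollary of Lemma~\ref{prawieAD}, and almost disjointness is strictly stronger than the combinatorial condition the lemma requires (which only asks that no $A \in \cA$ be almost covered by finitely many other members of $\cA$). The upper bound $\bb(\I,\I,\I) \le \aleph_1$ comes from the witnessing family constructed inside Lemma~\ref{prawieAD}, and the lower bound $\aleph_1 \le \bb(\I,\I,\I)$ is already guaranteed for every ideal by Theorem~\ref{thm:bNumber-for-all-ideals}(\ref{thm:bNumber-for-all-ideals:INEQUALITIES-III}).
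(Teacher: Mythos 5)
Your proposal is correct and matches the paper's own proof: the paper likewise reduces Theorem~\ref{AD} to Lemma~\ref{prawieAD} by noting that for $A,A_0,\dots,A_n$ in an almost disjoint family with $A\neq A_i$, the intersection $A\cap(A_0\cup\dots\cup A_n)$ is finite, hence $A\setminus(A_0\cup\dots\cup A_n)$ is infinite. No issues.
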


\begin{proof}
	This follows directly from Lemma \ref{prawieAD} -- it suffices to observe that for any almost disjoint family $\cA$, if $A,A_0,\ldots A_n\in\cA$ are such that $A\neq A_i$ for all $i\leq n$, then $A\cap(A_0\cup\ldots\cup A_n)$ is finite, so $A\setminus (A_0\cup\ldots\cup A_n)$ is infinite. 
\end{proof}

Let $2^{<\omega}$ be the set of all finite sequences of zeros and ones.
Let $\I_b$ denote the ideal on $2^{<\omega}$ generated by all branches i.e.~sets of the form $B_x=\{s\in 2^{<\omega}:s\subseteq x\}$ for $x\in 2^\omega$. It is easy to see that the ideal $\I_b$ is $\bf{\Sigma^0_2}$ and not tall. 

\begin{corollary}
\label{cor:bNumber-Ib=aleph1}
	$\bb(\I_b,\I_b,\I_b)=\aleph_1$.
\end{corollary}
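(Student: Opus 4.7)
The plan is to observe that the generating family $\cA = \{B_x : x \in 2^\omega\}$ is an uncountable almost disjoint family on the countable set $2^{<\omega}$, and then invoke Theorem~\ref{AD}. This reduces the corollary to a short verification after identifying $2^{<\omega}$ with $\omega$ via any bijection (so that $\cA$ becomes a family in $[\omega]^\omega$).

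First I would note that $|\cA| = \continuum > \aleph_0$, so the family is uncountable. Then the key point is almost disjointness: for any two distinct $x, y \in 2^\omega$, there is a smallest level $n \in \omega$ at which $x(n) \neq y(n)$, so the common initial segments of $x$ and $y$ are precisely the sequences in $2^{<\omega}$ of length at most $n$ that are initial segments of $x$ (equivalently, of $y$). Thus $B_x \cap B_y$ has at most $n+1$ elements, hence is finite. This shows $\cA$ is almost disjoint.

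With these two observations, Theorem~\ref{AD} applies directly and yields $\bb(\I_b, \I_b, \I_b) = \aleph_1$.

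There is no genuine obstacle here; the only thing to be careful about is that $\I_b$ is defined as the ideal \emph{generated} by $\cA$, which matches exactly the hypothesis of Theorem~\ref{AD}, and that almost disjointness is verified on the underlying countable set $2^{<\omega}$ (whose identification with $\omega$ does not affect the combinatorial content). Since each branch $B_x$ is itself infinite (it has one element of each length), we indeed have $\cA \subseteq [2^{<\omega}]^\omega$, so the hypothesis of Theorem~\ref{AD} is satisfied verbatim.
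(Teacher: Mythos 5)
Your proposal is correct and matches the paper's own argument: the paper also derives this corollary directly from Theorem~\ref{AD}, noting that $\{B_x : x\in 2^\omega\}$ is an almost disjoint family of size $\continuum$. Your explicit verification of almost disjointness (two branches share only the initial segments below their first point of divergence) is exactly the routine check the paper leaves implicit.
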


\begin{proof}
	It follows from Theorem \ref{AD}, as the family $\{B_x:x\in 2^\omega\}$ is almost disjoint of size $\cc$. 
\end{proof}

Let $\Omega$ be the set of all clopen subsets of the Cantor space $2^\omega$ having Lebesgue measure $1/2$ (note that $\Omega$ is countable).
Let $\mathcal{S}$ denote the Solecki's ideal on $\Omega$ i.e.~the ideal generated by sets of the form $G_x=\{A\in\Omega:x\in A\}$ for all $x\in 2^\omega$. $\mathcal{S}$ is a tall $\bf{\Sigma^0_{2}}$ ideal (see e.g.~\cite{MR2777744}).

\begin{theorem}
	$\bb(\mathcal{S},\mathcal{S},\mathcal{S})=\aleph_1$.
\end{theorem}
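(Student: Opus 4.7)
The plan is to apply Lemma~\ref{prawieAD} to the natural generating family $\cA=\{G_x:x\in 2^\omega\}$. Since $|\cA|=\continuum$ is uncountable and $\mathcal{S}$ is by definition the ideal it generates, it will suffice to verify the combinatorial hypothesis of the lemma: for every pairwise distinct $x,x_0,\dots,x_n\in 2^\omega$, the set
\[
G_x\setminus (G_{x_0}\cup\cdots\cup G_{x_n})=\{A\in\Omega: x\in A\text{ and }x_i\notin A\text{ for each }i\leq n\}
\]
is infinite. Once this is established, Lemma~\ref{prawieAD} immediately gives $\bb(\mathcal{S},\mathcal{S},\mathcal{S})=\aleph_1$.

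To construct infinitely many witnesses, I would fix $k\in\omega$ large enough that the cylinder $[x\restriction k]$ contains $x$ but none of the points $x_0,\dots,x_n$ (this is possible because $x$ differs from each $x_i$, so some initial segment of $x$ separates $x$ from all of them); moreover $k\geq 2$ so $[x\restriction k]$ has measure $2^{-k}\leq 1/4$. For each $m$ much larger than $k$, partition $2^\omega\setminus [x\restriction k]$ into the $2^m-2^{m-k}$ basic cylinders of length $m$ sitting outside $[x\restriction k]$; at most $n+1$ of these cylinders contain one of the $x_i$. Picking any $2^{m-1}-2^{m-k}$ of the remaining cylinders and taking their union with $[x\restriction k]$ produces a clopen set $A\in\Omega$ (of measure exactly $1/2$) containing $x$ and avoiding every $x_i$. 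For $m$ large the number of available cylinders exceeds $2^{m-1}-2^{m-k}$, so the choice is possible, and since distinct $m$'s yield clopen sets whose finest nontrivial level of discrimination differs, one obtains infinitely many distinct such $A$.

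The only real work is this verification that $G_x\setminus (G_{x_0}\cup\cdots\cup G_{x_n})$ is infinite; everything else is a direct invocation of Lemma~\ref{prawieAD}. The argument above is elementary and the ``obstacle'' is purely bookkeeping: making sure the measure adds up to exactly $1/2$ while steering clear of finitely many forbidden points, which the two-step construction (first a small cylinder isolating $x$, then many clopen completions in the remaining space) handles cleanly.
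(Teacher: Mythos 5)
Your proposal is correct and takes essentially the same route as the paper: both proofs simply apply Lemma~\ref{prawieAD} to the generating family $\{G_x:x\in 2^\omega\}$ (of size $\continuum$), the only substantive point being that $G_x\setminus(G_{x_0}\cup\dots\cup G_{x_n})$ is infinite for distinct points, which the paper asserts without detail and you verify by an explicit cylinder construction. One cosmetic caveat: your reason for distinctness across different lengths $m$ is not airtight (a union of level-$m$ cylinders can coincide with a union of coarser ones), but infinitude already follows from your construction because at a single sufficiently large level $m$ there are arbitrarily many admissible choices of the cylinder family, and distinct subfamilies of pairwise disjoint level-$m$ cylinders yield distinct sets $A$.
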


\begin{proof}
	It follows from Lemma \ref{prawieAD}, as the family $\{G_x:x\in 2^\omega\}$ has cardinality $\cc$ and for any $x,x_0,\ldots,x_n\in 2^\omega$ with $x\neq x_i$ for all $i\leq n$, we have $G_x\setminus (G_{x_0}\cup\ldots\cup G_{x_n})=\{A\in\Omega: x\in A \land \forall i\leq n \, (x_i\notin A)\}$ is infinite.
\end{proof}

%%%%%%%%%%%%%%%%%%%%%%%%%%%%%%%%%%%%%%%%%%%%%%%%%%
%%%%% SECTION
%%%%%%%%%%%%%%%%%%%%%%%%%%%%%%%%%%%%%%%%%%%%%%%%%%

\section{Consistency results}
\label{sec:ConsistencyResults}

In this section we apply some known consistency results to the case of the considered cardinals. The main aim of this section is to show that consistently the values of considered cardinals can be pairwise distinct. We start with two results about $\bb_s(\fin,\I,\I)$.

\begin{proposition}
	It is consistent that there is a P-ideal $\I$ with 
	$$\bb_s(\fin,\I,\I) < \bb(\I,\I,\I) = \bb(\I,\fin,\fin)=\bb(\I,\I,\fin).$$
\end{proposition}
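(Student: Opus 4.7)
The plan is to exhibit a concrete P-ideal for which the two sides of the inequality reduce to known cardinal invariants of the continuum that can be consistently separated, and then appeal to a standard consistency result.

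First, I would take $\I$ to be the summable ideal $\I_{1/n} = \{A\subseteq\omega : \sum_{n\in A} 1/(n+1)<\infty\}$ (the density zero ideal $\I_d$ would work equally well). This is a tall P-ideal with the Baire property, so Corollary~\ref{cor:bNumber-for-summable-and-EU-ideals} applies and immediately gives
\[
\bb_s(\fin,\I,\I) \;=\; \add(\cN) \;\leq\; \bb \;=\; \bb(\I,\fin,\fin)\;=\;\bb(\I,\I,\I)\;=\;\bb(\I,\I,\fin).
\]
So the right-hand equalities hold in ZFC, and the remaining task is purely to arrange $\add(\cN)<\bb$ in the ground model.

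Second, I would invoke a well-known consistency result: there exist models of ZFC in which $\add(\cN)=\aleph_1<\aleph_2=\bb$. The cleanest example is the Laver model, obtained by countable-support iteration of length $\omega_2$ of Laver forcing over a model of CH. In this model $\bb=\dd=\aleph_2$, while $\cov(\cM)=\aleph_1$, and since the Cichoń inequality $\add(\cN)\leq\add(\cM)\leq\cov(\cM)$ always holds, we get $\add(\cN)=\aleph_1$. Combining with the equalities from the first step yields $\bb_s(\fin,\I,\I)=\aleph_1<\aleph_2=\bb(\I,\I,\I)$ in this model, completing the proof.

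The argument is essentially bookkeeping once the two ingredients are lined up; the only thing to verify carefully is that the cited corollary indeed gives both cardinal characterizations for the summable ideal, and that $\I_{1/n}$ meets its hypotheses (tall, summable, P-ideal with the Baire property). No combinatorial obstacle arises because the hard work has already been done in Corollary~\ref{cor:bNumber-for-summable-and-EU-ideals}, which in turn rests on Corollary~\ref{thm:bNumber-for-P-ideals-with-BP} and Theorem~\ref{thm:SbNumber-min-b-addStar} together with the Hern\'andez--Hru\v{s}\'ak computation $\adds(\I_{1/n})=\add(\cN)$.
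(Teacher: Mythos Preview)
Your proof is correct and follows essentially the same approach as the paper: invoke Corollary~\ref{cor:bNumber-for-summable-and-EU-ideals} for a tall summable ideal to identify the two sides with $\add(\cN)$ and $\bb$, then cite the consistency of $\add(\cN)<\bb$. You supply more detail than the paper (naming $\I_{1/n}$ explicitly and specifying the Laver model rather than just citing a reference), but the argument is the same.
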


\begin{proof}
It follows from Corollary~\ref{cor:bNumber-for-summable-and-EU-ideals}
and the fact that it is consistent that $\add(\cN)<\bb$ (see e.g.~\cite{MR2768685}).
\end{proof}

\begin{theorem}[Essentially Louveau]
	Under Martin's axiom, there is a maximal ideal $\I$ with $\bb_s(\fin,\I,\I) = \kappa$ for all regular $\aleph_1\leq \kappa\leq \continuum$.
\end{theorem}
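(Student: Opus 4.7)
The plan is to reduce the statement to a cardinal invariant of the dual ultrafilter and then apply a classical transfinite construction under MA. By Theorem~\ref{thm:SbNumber-min-b-addStar}, $\bb_s(\fin,\I,\I)=\min\{\bb,\adds(\I)\}$, and Martin's axiom implies $\bb=\continuum$; hence under MA one has $\bb_s(\fin,\I,\I)=\adds(\I)$ whenever $\adds(\I)\leq\continuum$, and the task reduces to producing, for each regular $\kappa\in[\aleph_1,\continuum]$, a maximal ideal $\I$ with $\adds(\I)=\kappa$.

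The first move is to rephrase $\adds(\I)$ in ultrafilter terms. Writing $\cU=\I^*$ and passing to complements, $\adds(\I)$ becomes the minimum cardinality of a family $\cG\subseteq\cU$ with no pseudo-intersection in $\cU$, i.e., for which no $B\in\cU$ satisfies $B\subseteq^* G$ for all $G\in\cG$. Calling this invariant $\chi_p(\cU)$, it suffices to realize $\chi_p(\cU)=\kappa$ under MA for each prescribed regular $\kappa$.

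The construction is a transfinite recursion of length $\continuum$. I would build $\cU$ so that it is simultaneously a \emph{$\kappa$-P-ultrafilter} -- every subfamily of size ${<}\kappa$ has a pseudo-intersection in $\cU$, giving $\chi_p(\cU)\geq\kappa$ -- and has a $\subseteq^*$-base of size $\kappa$, giving $\chi_p(\cU)\leq\kappa$ (any $\supseteq^*$-cofinal family in $\cU$ has no pseudo-intersection in $\cU$, since such a pseudo-intersection would be almost contained in every element of $\cU$, impossible for a non-principal ultrafilter on $\omega$). Enumerating $\cP(\omega)=\{A_\alpha:\alpha<\continuum\}$ together with a bookkeeping that lists all subfamilies of the filters built so far of size ${<}\kappa$, one starts with a fixed $\kappa$-sized family $\{G_\xi:\xi<\kappa\}$ that will form the base; at stage $\alpha$ one appends whichever of $A_\alpha$ or $\omega\setminus A_\alpha$ preserves the f.i.p., and whenever the bookkeeping presents a subfamily $\cG$ of size ${<}\kappa$, one appends a pseudo-intersection of $\cG$ supplied by Martin's axiom (which gives $\mathfrak{p}=\continuum$: any ${<}\continuum$ many sets with the f.i.p.\ admit a pseudo-intersection, via a $\sigma$-centered forcing).

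The principal obstacle, and the point where Louveau's ingenuity enters, is reconciling the two demands: the pseudo-intersections added to secure the $\kappa$-P property must not inflate the base size beyond $\kappa$. The resolution is to pre-arrange the initial base $\{G_\xi:\xi<\kappa\}$ to be sufficiently closed under the pseudo-intersection operations that MA supplies, so that every pseudo-intersection introduced along the way is $\subseteq^*$-equivalent to one of the $G_\xi$'s. For $\kappa=\continuum$ a tower of length $\continuum$ suffices (it has no infinite pseudo-intersection at all), making the bookkeeping trivial. The subtle case is $\aleph_1\leq\kappa<\continuum$, where MA is genuinely needed to carry out the simultaneous construction.
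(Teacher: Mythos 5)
Your reduction is exactly the paper's: by Theorem~\ref{thm:SbNumber-min-b-addStar} and $\bb=\continuum$ under MA, everything comes down to producing a maximal ideal with $\adds(\I)=\kappa$, and your translation of $\adds(\I)$ into the pseudo-intersection invariant of the dual ultrafilter $\cU=\I^*$ is correct. The paper stops there and cites Louveau for the existence of such an ultrafilter; you instead sketch the construction, and that sketch contains a genuine error in precisely the case you yourself flag as the subtle one. Your mechanism for the upper bound $\adds(\I)\leq\kappa$ is an ultrafilter with a $\subseteq^*$-base $\{G_\xi:\xi<\kappa\}$. But under MA no nonprincipal ultrafilter on $\omega$ has a base of size $\kappa<\continuum$: MA gives $\mathfrak{p}=\continuum$, so a base $\cB$ of size $\kappa<\continuum$ would itself have an infinite pseudo-intersection $P$ with $P\subseteq^* U$ for every $U\in\cU$ (every $U$ almost contains some member of $\cB$), and splitting $P$ into two infinite disjoint halves, the half lying in $\cU$ would have to almost contain $P$, a contradiction. (Equivalently, $\mathfrak{u}=\continuum$ under MA.) So the plan of "pre-arranging the initial base to be closed under the pseudo-intersections MA supplies" cannot be carried out for any $\aleph_1\leq\kappa<\continuum$; the obstruction is not bookkeeping but outright inconsistency with the hypothesis you are working under.

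What the construction actually requires is a $\kappa$-sized family in $\cU$ with no pseudo-intersection \emph{in $\cU$} that is not a base --- typically a $\subseteq^*$-decreasing $\kappa$-tower $\{T_\xi:\xi<\kappa\}\subseteq\cU$ (regularity of $\kappa$ then gives the lower bound for subfamilies of the tower), while during the length-$\continuum$ recursion one must, for every candidate pseudo-intersection $P$ of the tower, put $\omega\setminus P$ into $\cU$, and simultaneously keep adding (via $\mathfrak{p}=\continuum$) pseudo-intersections for all other subfamilies of size ${<}\kappa$ without ever trapping a pseudo-intersection of the tower inside $\cU$. Reconciling these two demands is the actual content of Louveau's Th\'eor\`emes 3.9 and 3.12, which the paper simply invokes; your proposal does not supply this argument, and the substitute it offers (the small base) is false under MA. The $\kappa=\continuum$ case of your sketch is fine, since there the upper bound is automatic for nonprincipal maximal ideals and only the P-point-style lower bound is needed.
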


\begin{proof}
Louveau~\cite[Théorèmes 3.9 and 3.12]{MR411971} proved that there is a maximal ideal $\I$ with $\adds(\I)=\kappa$ (see also \cite[p.~2647]{MR1686797}).
On the other hand, it is well known that $\bb=\continuum$ under Martin's axiom.
Thus, $\bb_s(\fin,\I,\I) = \kappa$.
\end{proof}

Now we want to reformulate Canjar's results from \cite{Canjar2}, \cite{Canjar} and \cite{CanjarPhD}. He studied $\dnumber(\geq_\I\cap (\cD_\fin\times\cD_\fin))$ and $\dnumber(\geq_\I\cap (\cD_\I\times\cD_\I))$ in the case of maximal ideals $\I$. It is not difficult to see that for a maximal ideal $\I$ we have 
$\dnumber(\geq_\I\cap (\cD_\fin\times\cD_\fin)) = \bnumber(\geq_\I\cap (\cD_\fin\times\cD_\fin))$ and $\dnumber(\geq_\I\cap (\cD_\I\times\cD_\I)) = \bnumber(\geq_\I\cap (\cD_\I\times\cD_\I))$.
On the other hand, by Theorem~\ref{thm:Canjar-bNumber-characterization-by-Staniszewski-bNumber}(\ref{thm:Canjar-bNumber-characterization-by-Staniszewski-bNumber-nonstrict-inequalities}), 
$\bb(\I,\fin,\fin)=\bnumber(\geq_\I\cap (\cD_\fin\times\cD_\fin))$ and $\bb(\I,\I,\I)=\bnumber(\geq_\I\cap (\cD_\I\times\cD_\I))$. Thus, we have the following two results.

\begin{theorem}[Canjar~\cite{Canjar2} and \cite{CanjarPhD}]\ 
	The following is true in the model obtained by adding $\lambda$ Cohen reals to a model of GCH.
	\begin{enumerate}
		\item There exists an ideal $\I$ with $\bb(\I,\fin,\fin)=\kappa$  for all regular cardinals $\aleph_1\leq \kappa< \lambda$.
		\item There exist $2^\kappa$ ideals $\I$ with  $\bb(\I,\I,\I)=\kappa$ for all regular cardinals $\aleph_1\leq \kappa< \lambda$.
	\end{enumerate}
\end{theorem}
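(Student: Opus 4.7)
The plan is to deduce the theorem directly from Canjar's original results by translating his notation into the combinatorial one developed in this paper. In \cite{Canjar2, CanjarPhD}, Canjar studies the coinitialities $\dnumber(\geq_\I\cap(\cD_\fin\times\cD_\fin))$ and $\dnumber(\geq_\I\cap(\cD_\I\times\cD_\I))$ of the ultrapower $\omega^\omega/\I^*$ for maximal ideals $\I$, and shows that in the generic extension obtained by adding $\lambda$ Cohen reals to a model of GCH, for every regular $\aleph_1\leq\kappa<\lambda$ there is at least one maximal ideal $\I$ realising the first cardinal as $\kappa$, and in fact $2^\kappa$ pairwise distinct maximal ideals realising the second cardinal as $\kappa$. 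What remains is to recognise these coinitialities as our cardinals $\bnumber(\I,\fin,\fin)$ and $\bnumber(\I,\I,\I)$.

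First I would verify that for every maximal ideal $\I$ the Canjar bounding and dominating cardinals coincide, i.e.,
$$\bnumber(\geq_\I\cap(\cD_\fin\times\cD_\fin))=\dnumber(\geq_\I\cap(\cD_\fin\times\cD_\fin))$$
and the analogous equality with $\cD_\I$ in place of $\cD_\fin$. The reason is that, for maximal $\I$, given any $f,g\in\omega^\omega$ the set $\{n:f(n)>g(n)\}$ is either in $\I$ or has complement in $\I$, which forces either $f\leq_\I g$ or $g\leq_\I f$. Hence $\leq_\I$ descends to a linear preorder on $\cD_\fin$ (respectively $\cD_\I$) with no minimum element, and in such a linear preorder the smallest size of a subset lacking a common lower bound equals the smallest size of a coinitial subset (one simply chases strict inequalities, using the absence of a minimum, to transfer between the two notions).

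Next I would apply the translation machinery of Section~\ref{sec:BoundingNumbers}. By Theorem~\ref{thm:Canjar-bNumber-characterization-by-Staniszewski-bNumber}(\ref{thm:Canjar-bNumber-characterization-by-Staniszewski-bNumber-nonstrict-inequalities}) we have $\bnumber(\geq_\I\cap(\cD_\K\times\cD_\J))=\bnumber_s(\I,\J,\K)$; and since both triples $(\I,\fin,\fin)$ and $(\I,\I,\I)$ satisfy $\J\cap\K\subseteq\I$, Corollary~\ref{cor:bNumber-and-SbNumber} (equivalently Proposition~\ref{prop:bNumber-characterization}(\ref{prop:bNumber-characterization-equality})) yields $\bnumber_s(\I,\J,\K)=\bnumber(\I,\J,\K)$ for them. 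Chaining these equalities with the previous step, Canjar's coinitialities become $\bnumber(\I,\fin,\fin)$ in item~(1) and $\bnumber(\I,\I,\I)$ in item~(2), which is exactly the theorem.

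The only piece of genuine content is the identification $\bnumber=\dnumber$ for maximal ideals; reproducing Canjar's Cohen-model construction of the relevant maximal ideals (including the counting of $2^\kappa$ of them in case~(2)) is not required, as that construction can simply be quoted from \cite{Canjar2, CanjarPhD}. I expect this to be the only mildly delicate step, but it is a standard linear-order argument and should pose no real obstacle.
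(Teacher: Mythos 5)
Your proposal is correct and follows essentially the same route as the paper: the remarks preceding the theorem do exactly this translation, noting that $\dnumber(\geq_\I\cap(\cD_\fin\times\cD_\fin))=\bnumber(\geq_\I\cap(\cD_\fin\times\cD_\fin))$ and likewise for $\cD_\I$ when $\I$ is maximal, then invoking Theorem~\ref{thm:Canjar-bNumber-characterization-by-Staniszewski-bNumber} (with $\bb_s=\bb$ for these triples) and quoting Canjar's Cohen-model constructions. You merely spell out the linearity/no-minimum argument that the paper dismisses as ``not difficult to see,'' which is a welcome but not divergent addition.
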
 

\begin{theorem}[Essentially Canjar~\cite{Canjar}]\
\label{thm:bNumber-equals-cof-d}
\begin{enumerate}
\item There is an ideal $\I$ such that $\bb(\I,\fin,\fin)=\text{cf}(\dd)$.
\item If $\dd=\cc$, then there is a P-ideal $\I$ such that $\bb(\I,\fin,\fin)=\cf(\dd)$.
\end{enumerate}
\end{theorem}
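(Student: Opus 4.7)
The plan is to identify $\bnumber(\I,\fin,\fin)$, in the case of a maximal ideal $\I=\cU^*$, with the coinitiality of the linearly ordered quotient $\cD_\fin/{=_\I}$ --- an invariant of the ultrafilter $\cU$ studied extensively by Canjar --- and then invoke his theorems directly.

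By Corollary~\ref{cor:Canjar-bNumber-properties}(\ref{cor:Canjar-bNumber-properties-equality}) (the hypothesis $\fin=\fin\cap\fin\subseteq\I$ is automatic) together with Theorem~\ref{thm:Canjar-bNumber-characterization-by-Staniszewski-bNumber}, every ideal $\I$ on $\omega$ satisfies
\[
\bnumber(\I,\fin,\fin)
=\bnumber(\geq_\I\cap(\cD_\fin\times\cD_\fin))
=\bnumber(>_\I\cap(\cD_\fin\times\cD_\fin)).
\]
Unfolding the definition, this cardinal is the smallest size of a family $\cF\subseteq\cD_\fin$ such that for every $g\in\cD_\fin$ some $f\in\cF$ satisfies $\{n:f(n)<g(n)\}\notin\I$. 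When $\I=\cU^*$ is maximal, the latter condition rewrites as $f<_\I g$, so this cardinal is precisely the coinitiality of the linear order $(\cD_\fin/{=_\I},\leq_\I)$ --- the cardinal invariant of $\cU$ investigated by Canjar in \cite{Canjar2,Canjar,CanjarPhD}.

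For item~(1), I would quote Canjar's ZFC construction from \cite{Canjar2} (see also \cite[Chapter III]{CanjarPhD}) of an ultrafilter $\cU$ for which this coinitiality equals $\cf(\dd)$. The construction is a transfinite recursion of length $\dd$ that at each stage both extends a filter base toward an ultrafilter and controls the coinitiality of $\cD_\fin/{=_\I}$, by using an increasing sequence of regular cardinals cofinal in $\dd$ together with corresponding unbounded subfamilies of $\cD_\fin$ chosen to force the coinitiality to be exactly $\cf(\dd)$. Setting $\I=\cU^*$ gives the ideal required in item~(1).

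For item~(2), I would invoke Canjar's strengthening from \cite{Canjar}: when $\dd=\cc$, the above recursion can be performed so as to additionally guarantee that $\cU$ is a P-point, because there are only $\cc=\dd$ many countable decreasing sequences in $\cU$ to diagonalize, and the recursion already has $\dd$ many stages in which to do this. Then $\I=\cU^*$ is a P-ideal with $\bnumber(\I,\fin,\fin)=\cf(\dd)$. The main obstacle in the proof lies inside Canjar's original constructions rather than in the translation step above; for item~(2), the delicate point is the simultaneous control of the P-point property and of the target coinitiality $\cf(\dd)$, which crucially relies on $\dd=\cc$ to allow a length-$\dd$ recursion to handle all $\cc$ many diagonalization tasks.
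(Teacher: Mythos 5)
Your proposal is correct and follows essentially the same route as the paper: translate $\bb(\I,\fin,\fin)$ into $\bnumber(\geq_\I\cap(\cD_\fin\times\cD_\fin))$ via Theorem~\ref{thm:Canjar-bNumber-characterization-by-Staniszewski-bNumber} and Proposition~\ref{prop:bNumber-characterization}(\ref{prop:bNumber-characterization-equality}), observe that for a maximal ideal this bounding number coincides with the coinitiality invariant $\dnumber(\geq_\I\cap(\cD_\fin\times\cD_\fin))$ of the ultrapower studied by Canjar, and then quote his constructions (in ZFC for $\cf(\dd)$, and under $\dd=\cc$ for a P-point, whose dual ideal is a P-ideal). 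The only minor discrepancy is bibliographic: the paper attributes both items to \cite{Canjar} (the existence-theorem paper), whereas you credit item~(1) to \cite{Canjar2}/\cite{CanjarPhD}, which the paper reserves for the Cohen-model results.
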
 

Recall that consistency of $\bb<\cf(\dd)$ and $\bb<\cf(\dd)\leq\dd=\cc$ follows for instance from \cite[Theorem 2.5]{MR2768685}. 

\begin{corollary}\ 
\label{cor:CON-b-less-than-bIFF--for-P-ideals}
\label{thm:CON-b-less-than-bIFF}
\begin{enumerate}
\item If $\bb<\cf(\dd)$, then there is an ideal $\I$ such that $\bb<\bb(\I,\fin,\fin)=\text{cf}(\dd)$.
\item If $\bb<\cf(\dd)\leq\dd=\cc$, then there is a P-ideal $\I$ such that
$$\bb<\bb(\I,\fin,\fin)=\bb(\I,\I,\I)=\bb(\I,\I,\fin)=\text{cf}(\dd).$$
\end{enumerate}
\end{corollary}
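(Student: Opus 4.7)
The corollary is essentially an immediate combination of two earlier results already available at this point of the paper, so the plan is short. The key engine is Theorem~\ref{thm:bNumber-equals-cof-d}, which provides ideals realizing the value $\cf(\dd)$ for $\bb(\I,\fin,\fin)$, while Theorem~\ref{thm:bNumber-for-P-ideals} will collapse the other three cardinals on the right-hand side of item (2) to $\bb(\I,\fin,\fin)$ once we know $\I$ is a P-ideal.

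For item (1), I would apply Theorem~\ref{thm:bNumber-equals-cof-d}(1) to fix an ideal $\I$ with $\bb(\I,\fin,\fin)=\cf(\dd)$. Using the hypothesis $\bb<\cf(\dd)$, the identification $\bb(\I,\fin,\fin)=\cf(\dd)$ immediately gives the strict inequality $\bb<\bb(\I,\fin,\fin)$, so the entire chain $\bb<\bb(\I,\fin,\fin)=\cf(\dd)$ is produced with no further work.

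For item (2), the additional assumption $\dd=\cc$ allows me to invoke Theorem~\ref{thm:bNumber-equals-cof-d}(2) instead, yielding a P-ideal $\I$ with $\bb(\I,\fin,\fin)=\cf(\dd)$. Since $\I$ is a P-ideal, Theorem~\ref{thm:bNumber-for-P-ideals} supplies the chain of equalities
\[
\bb(\I,\fin,\fin)=\bb(\I,\I,\I)=\bb(\I,\fin,\I)=\bb(\I,\I,\fin),
\]
so the three cardinals $\bb(\I,\fin,\fin)$, $\bb(\I,\I,\I)$ and $\bb(\I,\I,\fin)$ all coincide with $\cf(\dd)$. The strict inequality $\bb<\cf(\dd)$ from the hypothesis then gives the desired conclusion.

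There is no real obstacle here: the only nontrivial ingredients, namely the realization of $\cf(\dd)$ as $\bb(\I,\fin,\fin)$ for a suitable (P-)ideal $\I$, and the collapse of the four $\bb$-like invariants on P-ideals, are both already established. The role of the corollary is simply to combine them, with the consistency comment preceding the statement (e.g.\ that $\bb<\cf(\dd)\leq\dd=\cc$ is consistent by \cite[Theorem 2.5]{MR2768685}) ensuring that the hypothesis is satisfiable and the conclusion non-vacuous.
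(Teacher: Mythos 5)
Your proposal is correct and follows exactly the paper's argument: item (1) is Theorem~\ref{thm:bNumber-equals-cof-d}(1) combined with the hypothesis $\bb<\cf(\dd)$, and item (2) is Theorem~\ref{thm:bNumber-equals-cof-d}(2) (using $\dd=\cc$) together with the collapse of the invariants for P-ideals given by Theorem~\ref{thm:bNumber-for-P-ideals}. Nothing further is needed.
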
 

\begin{proof}
Follows directly from Theorems~\ref{thm:bNumber-equals-cof-d} and \ref{thm:bNumber-for-P-ideals}.
\end{proof}

Next three results will establish that consistently the values of $\bb(\I,\I,\I)$, $\bb(\I,\fin,\fin)$ and $\bb(\I,\I,\fin)$ can be pairwise distinct

\begin{theorem}
\label{cor:CON-b-equals-bIII-less-than-bIFF-for-non-weak-P-ideals}
If $\bb<\bb(\J,\fin,\fin)$ for some ideal $\J$ (e.g. if $\bb<\cf(\dd)$), then there is an ideal $\I$, which is not a weak P-ideal, such that $\bb(\I,\I,\I)=\bb<\bb(\I,\fin,\fin)$.
\end{theorem}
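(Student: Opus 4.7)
The plan is to set $\I := \fin \otimes \J$ and read off all three required properties from results already established in the paper.

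First, I would verify that $\I$ is not a weak P-ideal. Since $\fin \subseteq \J$, we have $\fin \otimes \fin \subseteq \fin \otimes \J = \I$, and the identity map $\mathrm{id} \colon \omega^2 \to \omega^2$ witnesses $\fin \otimes \fin \leq_K \I$. By the Katětov-order characterization of weak P-ideals recalled in Section~\ref{sec:preliminaries}, an ideal fails to be a weak P-ideal exactly when it sits above $\fin \otimes \fin$ in the Katětov order, so $\I$ is not a weak P-ideal.

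Next, I would compute the two bounding numbers using the Fubini product results of Section~\ref{sec:Sums-and-products-of-ideals}. Applying Theorem~\ref{thm:bNumber-for-Fubini-products-III}(\ref{thm:bNumber-for-Fubini-products:III}) with the first factor $\fin$ and the second factor $\J$ yields
\[
\bb(\I,\I,\I) \;=\; \bb(\fin,\fin,\fin) \;=\; \bb,
\]
where the last equality is Theorem~\ref{thm:bNumber-for-all-ideals}(\ref{thm:bNumber-for-all-ideals:FFF-FIF-b}). On the other hand, Theorem~\ref{thm:bNumber-for-Fubini-products-IFF}(\ref{thm:bNumber-for-Fubini-products:IFF}) gives
\[
\bb(\I,\fin,\fin) \;=\; \bb(\J,\fin,\fin).
\]
Combining these with the hypothesis $\bb < \bb(\J,\fin,\fin)$ produces $\bb(\I,\I,\I) = \bb < \bb(\I,\fin,\fin)$, as required.

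Finally, for the parenthetical remark, if $\bb < \cf(\dd)$ one obtains a witness ideal $\J$ satisfying $\bb < \bb(\J,\fin,\fin) = \cf(\dd)$ directly from Corollary~\ref{cor:CON-b-less-than-bIFF--for-P-ideals}(1) (itself a consequence of Canjar's Theorem~\ref{thm:bNumber-equals-cof-d}). There is no genuine obstacle here: the whole statement is a short assembly of Fubini-product computations with the Katětov characterization of weak P-ideals; the only point that needs a moment of care is that the \emph{second} factor of the Fubini product controls $\bb(\I,\fin,\fin)$ while the \emph{first} factor controls $\bb(\I,\I,\I)$, so placing $\J$ in the second slot and $\fin$ in the first is exactly what separates the two cardinals.
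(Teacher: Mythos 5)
Your proposal is correct and is essentially the paper's own proof: the paper also takes $\I=\fin\otimes\J$, notes it is not a weak P-ideal because $\fin\otimes\fin\subseteq\I$, and gets $\bb(\I,\I,\I)=\bb(\fin,\fin,\fin)=\bb$ and $\bb(\I,\fin,\fin)=\bb(\J,\fin,\fin)>\bb$ from Theorems~\ref{thm:bNumber-for-Fubini-products-III}, \ref{thm:bNumber-for-Fubini-products-IFF} and \ref{thm:bNumber-for-all-ideals}(\ref{thm:bNumber-for-all-ideals:FFF-FIF-b}). Your observation about which Fubini factor controls which cardinal is exactly the point of the construction.
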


\begin{proof}
Consider the ideal $\I=\fin\otimes\J$. Obviously, $\I$ is not a weak P-ideal as $\fin\otimes\fin\subseteq\I$. By Theorems \ref{thm:bNumber-for-Fubini-products-III}(\ref{thm:bNumber-for-Fubini-products:III}), \ref{thm:bNumber-for-Fubini-products-IFF}(\ref{thm:bNumber-for-Fubini-products:IFF}) and \ref{thm:bNumber-for-all-ideals}(\ref{thm:bNumber-for-all-ideals:FFF-FIF-b}) we have $\bb(\I,\I,\I)=\bb(\fin,\fin,\fin)=\bb$ and $\bb(\I,\fin,\fin)=\bb(\J,\fin,\fin)>\bb$. 
\end{proof}

\begin{theorem}
\label{cor:CON-b-equals-bIFF-less-than-bIII-for-non-weak-P-ideals}
If $\bb<\bb(\J,\J,\J)$ for some ideal $\J$ (e.g. if $\bb<\cf(\dd)\leq\dd=\cc$), then there is an ideal $\I$, which is not a weak P-ideal, such that $\bb(\I,\fin,\fin)=\bb<\bb(\I,\I,\I)$.
\end{theorem}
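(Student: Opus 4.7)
The plan is to take $\I=\J\otimes\fin$, which is the natural dual of the ideal $\I=\fin\otimes\J$ used in the preceding Theorem~\ref{cor:CON-b-equals-bIII-less-than-bIFF-for-non-weak-P-ideals}. The motivation is structural: by Theorem~\ref{thm:bNumber-for-Fubini-products-III}(\ref{thm:bNumber-for-Fubini-products:III}) the $\bb(\cdot,\cdot,\cdot)$-number of a Fubini product is controlled by the \emph{outer} factor, whereas by Theorem~\ref{thm:bNumber-for-Fubini-products-IFF}(\ref{thm:bNumber-for-Fubini-products:IFF}) the $\bb(\cdot,\fin,\fin)$-number is controlled by the \emph{inner} factor. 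Placing $\J$ on the outside and $\fin$ on the inside therefore pushes $\bb(\I,\I,\I)$ up to $\bb(\J,\J,\J)$ while keeping $\bb(\I,\fin,\fin)$ equal to $\bb$.

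First I would compute the two cardinals. Theorem~\ref{thm:bNumber-for-Fubini-products-IFF}(\ref{thm:bNumber-for-Fubini-products:IFF}), applied with $\K=\fin$, gives $\bb(\I,\fin,\fin)=\bb(\fin,\fin,\fin)$, and the latter equals $\bb$ by Theorem~\ref{thm:bNumber-for-all-ideals}(\ref{thm:bNumber-for-all-ideals:FFF-FIF-b}). Next, Theorem~\ref{thm:bNumber-for-Fubini-products-III}(\ref{thm:bNumber-for-Fubini-products:III}), again with $\K=\fin$, yields $\bb(\I,\I,\I)=\bb(\J,\J,\J)$, which by the standing hypothesis is strictly greater than $\bb$.

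It remains to verify that $\I=\J\otimes\fin$ is not a weak P-ideal. I would exhibit the countable family $\{A_n:n\in\omega\}\subseteq\I$ given by $A_n=\{n\}\times\omega$. Each $A_n$ belongs to $\J\otimes\fin$ because its only column not in $\fin$ is the $n$-th one, and $\{n\}\in\J$. On the other hand, any $B\in\I^+$ satisfies $\{x:(B)_x\notin\fin\}\notin\J$, so in particular this set is infinite; picking any $n$ in it, the intersection $A_n\cap B=\{n\}\times(B)_n$ is infinite. Thus no $B\in\I^+$ is almost disjoint from every $A_n$, showing that $\I$ is not a weak P-ideal.

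The parenthetical assertion is immediate from Corollary~\ref{cor:CON-b-less-than-bIFF--for-P-ideals}(2): under the hypothesis $\bb<\cf(\dd)\leq\dd=\cc$, that corollary produces a P-ideal $\J$ with $\bb(\J,\J,\J)=\cf(\dd)>\bb$, verifying the hypothesis of the present theorem. Since every step is a direct application of already-established results from Sections~\ref{sec:basic-relations-between-the-bounding-numbers} and~\ref{sec:Sums-and-products-of-ideals}, there is no genuine obstacle; the only point requiring a moment of thought is the non-weak-P verification, which is handled by the explicit witnesses $A_n=\{n\}\times\omega$.
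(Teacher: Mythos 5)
Your proposal is correct and coincides with the paper's own proof: the paper also takes $\I=\J\otimes\fin$ and cites Theorems~\ref{thm:bNumber-for-Fubini-products-III}(\ref{thm:bNumber-for-Fubini-products:III}), \ref{thm:bNumber-for-Fubini-products-IFF}(\ref{thm:bNumber-for-Fubini-products:IFF}) and \ref{thm:bNumber-for-all-ideals}(\ref{thm:bNumber-for-all-ideals:FFF-FIF-b}) to get $\bb(\I,\I,\I)=\bb(\J,\J,\J)>\bb$ and $\bb(\I,\fin,\fin)=\bb$. The only cosmetic difference is that the paper dismisses the weak-P question by noting $\fin\otimes\fin\subseteq\I$, while you verify it directly with the witnesses $A_n=\{n\}\times\omega$; both are fine.
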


\begin{proof}
Consider the ideal $\I=\J\otimes\fin$. Obviously, $\I$ is not a weak P-ideal as $\fin\otimes\fin\subseteq\I$. By Theorems \ref{thm:bNumber-for-Fubini-products-III}(\ref{thm:bNumber-for-Fubini-products:III}), \ref{thm:bNumber-for-Fubini-products-IFF}(\ref{thm:bNumber-for-Fubini-products:IFF}) and \ref{thm:bNumber-for-all-ideals}(\ref{thm:bNumber-for-all-ideals:FFF-FIF-b}) we have $\bb(\I,\I,\I)=\bb(\J,\J,\J)>\bb$ and $\bb(\I,\fin,\fin)=\bb(\fin,\fin,\fin)=\bb$. 
\end{proof}

\begin{theorem}
\label{cor:CON-b-equals-bIFF-equals-bIII-less-than-bIIF-for-weak-P-ideals}
If $\bb<\bb(\J,\fin,\fin)$ for some ideal $\J$ (e.g. if $\bb<\cf(\dd)$), then there is a weak P-ideal $\I$ such that
$$\bb(\I,\I,\I)\leq\bb=\bb(\I,\fin,\fin)<\bb(\I,\I,\fin).$$
\end{theorem}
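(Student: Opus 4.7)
The plan is to construct $\I$ as a Fubini product of the form $\I=\J'\otimes\{\emptyset\}$ on $\omega\times\omega$, for a carefully chosen weak P-ideal $\J'$ satisfying both $\bb<\bb(\J',\fin,\fin)$ and $\bb(\J',\J',\J')\leq\bb$. The verification of the three required (in)equalities will then be a direct application of the Fubini-product formulas proved in Section~\ref{sec:Sums-and-products-of-ideals}.

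The first step, and the main obstacle, is the existence of such a $\J'$. Starting from the given ideal $\J$ with $\bb<\bb(\J,\fin,\fin)$, I would produce $\J'$ either by modifying $\J$ or, more plausibly, by an independent Canjar-style combinatorial construction analogous to the one behind Theorem~\ref{thm:bNumber-equals-cof-d} (using that $\bb<\bb(\J,\fin,\fin)\leq\dnumber$). Note that two natural classes are excluded: $\J'$ cannot be a P-ideal, since Theorem~\ref{thm:bNumber-for-P-ideals} would then force $\bb(\J',\J',\J')=\bb(\J',\fin,\fin)>\bb$, and $\J'$ cannot be coanalytic, since Theorem~\ref{thm:bNumber-coanalytic-weak-P-ideals} would force $\bb(\J',\fin,\fin)=\bb$. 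So $\J'$ must be a weak P-ideal that is neither a P-ideal nor coanalytic, for instance a suitable maximal ideal.

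Once such a $\J'$ has been fixed, the second step is to verify that $\I:=\J'\otimes\{\emptyset\}$ is a weak P-ideal. Given any countable $\cA\subseteq\I$, for each $A\in\cA$ set $A':=\{x:(A)_x\neq\emptyset\}$, which belongs to $\J'$ by definition of the Fubini product; apply the weak P property of $\J'$ to obtain $B'\in(\J')^+$ with $B'\cap A'\in\fin$ for every $A\in\cA$; then $B:=B'\times\{0\}$ satisfies $\{x:(B)_x\neq\emptyset\}=B'\notin\J'$, so $B\in\I^+$, and $B\cap A\subseteq\{(x,0):x\in B'\cap A'\}$, which is finite.

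Finally, the three cardinal computations follow immediately from the Fubini-product lemmas. Theorem~\ref{thm:bNumber-for-Fubini-products-IFF}(\ref{thm:bNumber-for-Fubini-products:IFF2}) gives $\bb(\I,\fin,\fin)=\bb$; Theorem~\ref{thm:bNumber-for-Fubini-products-IIF}(\ref{thm:bNumber-for-Fubini-products-IIF-Jx0}) together with Proposition~\ref{prop:bNumber-monotonicity} yields $\bb(\I,\I,\fin)=\bb(\J',\J',\fin)\geq\bb(\J',\fin,\fin)>\bb$; and Theorem~\ref{thm:bNumber-for-Fubini-products-III}(\ref{thm:bNumber-for-Fubini-products:III}) yields $\bb(\I,\I,\I)=\bb(\J',\J',\J')\leq\bb$. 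Chaining these gives the desired $\bb(\I,\I,\I)\leq\bb=\bb(\I,\fin,\fin)<\bb(\I,\I,\fin)$.
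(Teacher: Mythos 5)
Your reduction, as far as it goes, is applied correctly: if a weak P-ideal $\J'$ with $\bb(\J',\fin,\fin)>\bb$ and $\bb(\J',\J',\J')\leq\bb$ existed, then $\I=\J'\otimes\{\emptyset\}$ would indeed be a weak P-ideal (your column argument is fine), and Theorems~\ref{thm:bNumber-for-Fubini-products-IFF}(\ref{thm:bNumber-for-Fubini-products:IFF2}), \ref{thm:bNumber-for-Fubini-products-IIF}(\ref{thm:bNumber-for-Fubini-products-IIF-Jx0}) and \ref{thm:bNumber-for-Fubini-products-III}(\ref{thm:bNumber-for-Fubini-products:III}) give the three (in)equalities exactly as you say. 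The genuine gap is the first step: you never produce $\J'$, and the hypothesis of the theorem gives you nothing of the sort --- it only supplies some ideal $\J$ with $\bb<\bb(\J,\fin,\fin)$, with no information about $\bb(\J,\J,\J)$ and no reason for $\J$ to be a weak P-ideal. Worse, your one concrete suggestion, ``a suitable maximal ideal'', cannot work: for a maximal ideal $\I^+=\I^*$, so a maximal weak P-ideal is automatically a P-ideal, and then Theorem~\ref{thm:bNumber-for-P-ideals} forces $\bb(\J',\J',\J')=\bb(\J',\fin,\fin)>\bb$, contradicting what you need. In fact a weak P-ideal with $\bb(\J',\J',\J')\leq\bb<\bb(\J',\fin,\fin)$ is precisely the kind of object the paper does not know how to build (it is close to the open questions at the end of Section~\ref{sec:ConsistencyResults}; the paper's only ideal with $\bb(\I,\I,\I)<\bb(\I,\fin,\fin)$, namely $\fin\otimes\J$ in Theorem~\ref{cor:CON-b-equals-bIII-less-than-bIFF-for-non-weak-P-ideals}, is explicitly not a weak P-ideal), so your ``first step'' is not a routine modification but an unproved, possibly harder, existence statement.

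The paper avoids this entirely by taking $\I=(\fin\otimes\fin)\cap(\J\otimes\{\emptyset\})$ for the given $\J$, with no extra assumptions on $\J$. Weak P-ness is checked by a direct diagonal selection of one point per column; $\bb(\I,\fin,\fin)=\bb$ comes for free because $\I\subseteq\fin\otimes\fin$ is meager, so Theorem~\ref{thm:bNumber-IFF-for-ideals-with-BP} applies; $\bb(\I,\I,\I)\leq\bb$ follows from the sandwich $\fin\otimes\{\emptyset\}\subseteq\I\subseteq\fin\otimes\fin$ via Lemma~\ref{lemma:Fubini-product-bounded-form-above-by-first-ideal-III}; and $\bb(\I,\I,\fin)\geq\bb(\J,\fin,\fin)>\bb$ is proved by a short direct argument projecting partitions to the first coordinate. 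If you want to salvage your plan, you would have to either prove the existence of your $\J'$ under the stated hypothesis (which I do not see how to do) or switch to an ideal, like the paper's intersection ideal, whose relevant cardinals can be controlled using only the assumption $\bb<\bb(\J,\fin,\fin)$.
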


\begin{proof}
Consider the ideal $\I=(\fin\otimes\fin)\cap(\J\otimes\{\emptyset\})$. 

First we show that $\I$ is a weak P-ideal (this fact is also shown in \cite[Lemma 2.3]{MR3784399}, however we prove it here for the sake of completeness). Fix a partition $\langle X_n\rangle\in\mathcal{P}_\I$. Define by induction two sequences $\langle m_n\rangle,\langle k_n\rangle\in \omega^\omega$ such that for each $n\in\omega$ we have $(n,m_n)\in X_{k_n}$ and
$$m_n=
\begin{cases}
\min\{m\in\omega:m\notin(\bigcup\{X_{k_i}:i<n\})_n\} & \text{if $\omega\not\subseteq(\bigcup_{i<n}X_{k_i})_n$},\\
\min\{m\in\omega:m\in(\bigcup\{X_{k}:|(X_k)_n|=\omega\})_n\} & \text{otherwise.}\\
\end{cases}$$
Then $Y=\{(n,m_n):n\in\omega\}\notin\I$ as $\{n\in\omega: (Y)_n\neq\emptyset\}=\omega\notin\J$. Moreover, $Y\cap X_n$ is finite for all $n$ (otherwise we would have $|(X_n)_k|=\omega$ for infinitely many $k\in\omega$). Thus $\I$ is a weak P-ideal.

As $\I\subseteq\fin\otimes\fin$ and $\fin\otimes\fin$ is meager, $\I$ is meager as well. Thus, $\I$ has the Baire property and $\bb(\I,\fin,\fin)=\bb$ by Theorem \ref{thm:bNumber-IFF-for-ideals-with-BP}.

By Lemma \ref{lemma:Fubini-product-bounded-form-above-by-first-ideal-III} we have $\bb(\I,\I,\I)\leq\bb(\fin,\fin,\fin)=\bb$, as $\fin\otimes\{\emptyset\}\subseteq\I\subseteq\fin\otimes\fin$.

To finish the proof we need to show that $\bb(\I,\I,\fin)\geq\bb(\J,\fin,\fin)$. Let $\kappa<\bb(\J,\fin,\fin)$ and $\{\langle E^\alpha_n: n\in\omega\rangle :\alpha<\kappa\}\subseteq \fin(\omega^2)^{\omega}$. We will show that this family does not witness $\bb(\I,\I,\fin)\leq \kappa$. 

For each $n\in\omega$ and $\alpha<\kappa$ define 
$$A^\alpha_n=\{k\in\omega: (E^\alpha_n)_k \neq \emptyset\}.$$
Since $\{\langle A^\alpha_n: n\in\omega\rangle :\alpha<\kappa\}\subseteq\fin^\omega$ 
and
$\kappa<\bb(\J,\fin,\fin)$, there is $\langle B_n\rangle\in\cP_\fin$ such that $\bigcup_{n\in\omega}\left(B_n\cap A^\alpha_n\right)\in\J$ for all $\alpha<\kappa$. 

Define $C_n=B_n\times\omega$ for all $n\in\omega$. Then $\langle C_n\rangle\in\cP_\I$. 

Fix $\alpha<\kappa$ and denote $X=\bigcup_{n\in\omega}\left(C_n\cap E^\alpha_n\right)$. 
If we show that $X\in \I$, the proof will be finished.

Observe that for each $k\in\omega$ there is $j\in\omega$ with $k\in B_j$. Then $(X)_k=(E^\alpha_j)_k$, so $(X)_k$ is finite. Thus, $X\in\{\emptyset\}\otimes\fin\subseteq\fin\otimes\fin$.

Moreover, 
\begin{equation*}
	\begin{split}
\left\{k\in\omega: (X)_k\neq\emptyset\right\}&
=
\left\{k\in\omega: \exists n\in\omega\,\exists i\in\omega\, (k,i)\in C_n\cap E^\alpha_n\right\}\\
&
=
\bigcup_{n\in\omega} \left\{k\in\omega: \exists i\in\omega\, (i \in (C_n\cap E^\alpha_n)_k)\right\}\\
&\subseteq 
\bigcup_{n\in\omega} 
\left(\left\{k\in\omega: \exists i\in\omega(i\in (C_n)_k)\right\}\cap 
\left\{k\in\omega: \exists i\in\omega (i \in (E^\alpha_n)_k)\right\}\right)\\
&=
\bigcup_{n\in\omega}\left(B_n\cap A^\alpha_n\right)\in\J.
\end{split}
\end{equation*}
Thus, $X\in\J\otimes\{\emptyset\}$ and we can conclude that $X\in\I$.  
\end{proof}

We were not able to compute the exact value of $\bb(\I,\I,\I)$ in Theorem \ref{cor:CON-b-equals-bIFF-equals-bIII-less-than-bIIF-for-weak-P-ideals}. Thus, we have the following open question.

\begin{question}
	Is it consistent that there is a weak P-ideal $\I$ such that	$\bb(\I,\I,\I)<\bb(\I,\fin,\fin)<\bb(\I,\I,\fin)$?
\end{question}

There is another closely related open question.

\begin{question}
	Is it consistent that there is a weak P-ideal $\I$ such that	$\bb(\I,\fin,\fin)<\bb(\I,\I,\I)<\bb(\I,\I,\fin)$?
\end{question}

%%%%%%%%%%%%%%%%%%%%%%%%%%%%%%%%%%%%%%%%%%%%%%%%%%
%%%%% SECTION
%%%%%%%%%%%%%%%%%%%%%%%%%%%%%%%%%%%%%%%%%%%%%%%%%%

\section{Questions and remarks about $\bb(\I,\I,\I)$}

In this section we collect some remarks indicating difficulties in obtaining any general results concerning $\bb(\I,\I,\I)$ for Borel ideals.

\begin{remark}
Observe that $\I\subseteq\J$ does not give us any information about the relation between $\bb(\I,\I,\I)$ and $\bb(\J,\J,\J)$. Indeed, let $\I$ be an ideal generated by an uncountable family $\cF\subseteq\omega^\omega$ of pairwise almost disjoint graphs (i.e., $\{n\in\omega:f(n)=g(n)\}\in\fin$ for any $f,g\in\cF$, $f\neq g$). Then $\fin(\omega^2)\subseteq\I\subseteq\fin\otimes\fin$, however $\bb(\fin(\omega^2),\fin(\omega^2),\fin(\omega^2))=\bb(\fin\otimes\fin,\fin\otimes\fin,\fin\otimes\fin)=\bb$ (by Corollary~\ref{cor:finXfinXfinXdots}) and $\bb(\I,\I,\I)=\aleph_1$ (by Theorem \ref{AD}).
\end{remark}

\begin{remark}
We have non-tall ideals with $\bb(\I,\I,\I)=\aleph_1$ (e.g. the branching ideal $\I_b$) as well as non-tall ideals with $\bb(\I,\I,\I)=\bb$ (this is the case for instance for $\fin$, $\fin\otimes\{\emptyset\}$ or $\{\emptyset\}\otimes\fin$). The same holds for tall ideals -- $\mathcal{S}$ is a tall ideal with $\bb(\mathcal{S},\mathcal{S},\mathcal{S})=\aleph_1$ whereas the ideal $\I_d$ is a tall ideal with $\bb(\I_d,\I_d,\I_d)=\bb$.
\end{remark}

\begin{remark}
$\bf{\Sigma^0_{2}}$ ideals may have different values of $\bb(\I,\I,\I)$. Indeed, there are $\bf{\Sigma^0_{2}}$ ideals with $\bb(\I,\I,\I)=\aleph_1$ (e.g. the branching ideal $\I_b$ or the Solecki's ideal $\mathcal{S}$) as well as $\bf{\Sigma^0_{2}}$ ideals with $\bb(\I,\I,\I)=\bb$ (e.g. ideals $\fin$, $\fin\otimes\{\emptyset\}$ or $\{\emptyset\}\otimes\fin$).
\end{remark}

\begin{remark}
Note that actually we cannot compute $\bb(\I,\I,\I)$ basing on the descriptive complexity of $\I$. Indeed, for each $n\geq 1$ the ideal $\fin^n$ is an $\bf{\Sigma^0_{2n}}$-complete ideal such that $\bb(\fin^n,\fin^n,\fin^n)=\bb$ (see Corollary \ref{cor:finXfinXfinXdots}). On the other hand, Calbrix in \cite{Calbrix} proved that given a $\bf{\Sigma^0_\alpha}$-complete, $\alpha>1$, (respectively: $\bf{\Pi^0_\alpha}$-complete, $\alpha>2$) subset $A$ of $2^\omega$, the ideal $\mathcal{I}_{A}$ on $2^{<\omega}$ generated by the family $\{B_x: x\in A\}$ is $\bf{\Sigma^0_\alpha}$-complete (respectively: $\bf{\Pi^0_\alpha}$-complete). Moreover, since the family $\{B_x: x\in A\}$ is almost disjoint, by Theorem \ref{AD} we get $\bb(\I_A,\I_A,\I_A)=\aleph_1$.
\end{remark}

We end with two natural open questions about $\bb(\I,\I,\I)$.

\begin{question}
Is $\bb(\I,\I,\I)\leq \dd$ for every ideal $\I$?
\end{question} 

\begin{question}
	Is it consistent that $\aleph_1<\bb(\I,\I,\I)<\bb$ for some ideal $\I$?
\end{question}

%%%%%%%%%%%%%%%%%%%%%%%%%%%%%%%%%%%%%%%%%%%%%%%%%%
%%%%% REFERENCES
%%%%%%%%%%%%%%%%%%%%%%%%%%%%%%%%%%%%%%%%%%%%%%%%%%

\bibliographystyle{amsplain}
\bibliography{bnumberFK}

\end{document}